\providecommand{\eqnref}[1]{(\ref{e:#1})}
\providecommand{\secref}[1]{\S\ref{s:#1}}
\theoremstyle{plain}
\newtheorem{theorem}{Theorem}[section]
\newtheorem{lemma}[theorem]{Lemma}
\newtheorem{fact}[theorem]{Fact}
\newtheorem*{fact*}{Fact}
\newtheorem{claim}[theorem]{Claim}
\newtheorem*{claim*}{Claim}
\theoremstyle{definition}
\newtheorem{definition}[theorem]{Definition}
\newtheorem*{definition*}{Definition}
\newtheorem*{notation*}{Notation}
\theoremstyle{remark}
\newtheorem{remark}[theorem]{Remark}
\newtheorem*{remark*}{Remark}
\newtheorem*{example*}{Example}
\newtheorem{examples}[theorem]{Examples}
\newtheorem*{examples*}{Examples}
\newtheorem*{note*}{Note}
\newtheorem*{question*}{Question}
\begin{document}
\providecommand{\C}{\mathbb{C}}
\providecommand{\N}{\mathbb{N}}
\providecommand{\R}{\mathbb{R}}
\providecommand{\Z}{\mathbb{Z}}
\providecommand{\Q}{\mathbb{Q}}
\providecommand{\A}{\mathcal{A}}
\providecommand{\B}{\mathcal{B}}
\providecommand{\U}{\mathcal{U}}
\providecommand{\g}{\mathfrak{g}}
\providecommand{\h}{\mathfrak{h}}
\providecommand{\m}{\mathfrak{m}}
\providecommand{\s}{\mathfrak{s}}
\providecommand{\so}{\mathfrak{so}}
\providecommand{\su}{\mathfrak{su}}
\providecommand{\Th}{\operatorname{Th}}
\providecommand{\id}{\operatorname{id}}
\providecommand{\st}{\operatorname{st}}
\providecommand{\mod}{\operatorname{mod}}
\providecommand{\Ad}{\operatorname{Ad}}
\providecommand{\ad}{\operatorname{ad}}
\providecommand{\SO}{\operatorname{SO}}
\providecommand{\Spin}{\operatorname{Spin}}
\providecommand{\SU}{\operatorname{SU}}
\providecommand{\GL}{\operatorname{GL}}
\providecommand{\pr}{\operatorname{pr}}
\providecommand{\Aut}{\operatorname{Aut}}
\providecommand{\End}{\operatorname{End}}
\providecommand{\Mat}{\operatorname{Mat}}
\providecommand{\alg}{\operatorname{alg}}
\providecommand{\ker}{\operatorname{ker}}
\providecommand{\trd}{\operatorname{trd}}
\providecommand{\cl}{\operatorname{cl}}
\providecommand{\dcl}{\operatorname{dcl}}
\providecommand{\tp}{\operatorname{tp}}
\providecommand{\RCF}{\operatorname{RCF}}
\providecommand{\ACF}{\operatorname{ACF}}
\providecommand{\RCVF}{\operatorname{RCVF}}
\providecommand{\ACVF}{\operatorname{ACVF}}

\renewcommand{\O}{\mathcal{O}}

\renewcommand{\o}{\circ}

\renewcommand{\l}{\mathfrak{l}}

\newcommand{\Ru}{\mathcal{R}}
\newcommand{\Goo}{{G^{00}}}
\newcommand{\Hoo}{{H^{00}}}
\newcommand{\Soo}{{S^{00}}}
\newcommand{\gm}{\g_\m}

\newcommand{\lie}[1]{#1}

\newcommand{\lieG}{\lie{G}}
\newcommand{\lieH}{\lie{H}}
\newcommand{\lieS}{\lie{S}}
\newcommand{\lieT}{\lie{T}}

\newcommand{\inner}[1]{\left<#1\right>}

\newcommand{\conj}{{ }^\wedge}

\newcommand{\recalldefn}[1]{\emph{#1}}
\newcommand{\defn}[1]{{\bf #1}}

\newcommand{\terpAr}{\ar@{~>}}
\newcommand{\biterpAr}[2]{\terpAr@<0.5ex>[#1]^{#2}}
\newcommand{\terp}[2]{\xymatrix{#1 \terpAr[r] & #2}}

\newcommand{\CHECKME}{\textbf{CHECKME}}
\newcommand{\FIXME}[1]{\textbf{FIXME: #1}}
\newcommand{\TODO}[1]{\textbf{TODO: #1}}

\title{Definability in the group of infinitesimals of a compact Lie group}
\author{M. Bays \& Y. Peterzil}

\maketitle

\abstract{
  We show that for $G$ a simple compact Lie group, the infinitesimal subgroup 
  $\Goo$ is bi-interpretable with a real closed convexly valued field.
  We deduce that for $G$ an infinite definably compact group definable in an 
  o-minimal expansion of a field, $\Goo$ is bi-interpretable with the disjoint 
  union of a (possibly trivial) $\Q$-vector space and finitely many (possibly 
  zero) real closed valued fields.
  We also describe the isomorphisms between such infinitesimal subgroups, and 
  along the way prove that every {\em definable} field in a real closed 
  convexly valued field $R$ is definably isomorphic to $R$.
}
\newcommand{\sub}{\subseteq}

\section{Introduction}
\label{s:intro}

Let $G$ be compact linear Lie group, by which we mean a compact closed Lie 
subgroup of $\GL_d(\R)$ for some $d \in \N$, e.g.\ $G=SO_d(\R)$.
By Fact~\ref{f:chevalley}(i) below, any compact Lie group is isomorphic to a linear 
Lie group.

Let $\Ru \succ  \R$ be a real closed field properly extending the real field.
By Fact~\ref{f:chevalley}(ii), $G$ is the group of $\R$-points of an algebraic 
subgroup of $\GL_d$ over $\R$, and we write $G(\Ru) \leq  \GL_d(\Ru)$ for the 
$\Ru$-points of this algebraic group.

Let $\st : \O \rightarrow  \R$ be the standard part map,
the domain $\O = \bigcup_{n \in \N} [-n,n] \subseteq  \Ru$ of which is a valuation ring 
in $\Ru$.
Let
\[ \m = \st^{-1}(0) = \bigcap_{n \in \N_{>0}} [{ -\frac1n,\frac1n }] \subseteq  \Ru \]
be the maximal ideal of $\O$.
A field equipped with a valuation ring, such as $(\Ru;+,\cdot,\O)$, is known 
as a {\em valued} {\em field}. The complete first-order theory of $(\Ru;+,\cdot,\O)$ 
is the theory $\RCVF$ of non-trivially convexly valued real closed fields 
\cite{cherlinDickmann}.

Since $G$ is compact, $\st : \O \rightarrow  \R$ induces a totally defined homomorphism 
$\st_G : G(\Ru) \twoheadrightarrow  G$.
The kernel $\ker(\st_G) \triangleleft G$ is the ``infinitesimal subgroup'' of $G$ in 
$\Ru$. Our main results below describe definability in this group.

In fact, by Fact~\ref{f:DCLit}(iv) below, $\ker(\st_G)$ is precisely $\Goo(\Ru)$, 
the set of $\Ru$-points of the smallest bounded-index $\bigwedge$-definable 
subgroup $\Goo$ of the semialgebraic group $G$.
In terms of matrices,
\begin{equation} \label{e:Goo}
  \Goo(\Ru) = G(\Ru) \cap (I+\Mat_d(\m)) = \bigcap_{n \in \N_{>0}} ( G(\Ru) 
  \cap (I + \Mat_d([-\frac1n,\frac1n]) )
\end{equation}
(where we write $\Mat_d(X)$ for the set of matrices with entries in a set $X 
\subseteq  \Ru$). Note in particular that $\Goo(\Ru)$ is definable in the valued field 
$(\Ru;+,\cdot,\O)$.

We adopt the convention that ``definable'' always means definable with 
parameters: a {\em definable} {\em set} in a structure $\A=(A;\ldots )$ is a subset of a 
Cartesian power $A^n$ defined by a first-order formula in the language of $\A$ 
with parameters from $A$.

Recall that an {\em interpretation} of a (one-sorted) structure $\A$ in a 
structure $\B$ is a bijection of the universe of $\A$ with a definable set
in $\B$, or more generally with the quotient of a definable set by a definable 
equivalence relation, such that the image of any definable set in $\A$ is 
definable in $\B$. A {\em definition} of $\A$ in $\B$ is an interpretation whose 
codomain is a definable set rather than a definable quotient; in fact, these 
are the only interpretations which will arise in the main results of this 
article. There is an obvious notion of composition of interpretations.
A pair of interpretations of $\A$ in $\B$ and of $\B$ in $\A$ form a 
{\em bi-interpretation} if the composed interpretations of $\A$ in $\A$ and of 
$\B$ in $\B$ are definable maps in $\A$ and $\B$ respectively.
If $\A$ and $\B$ are bi-interpretable,
then the structure induced by $\B$ on the image of $\A$ is precisely the 
structure of $\A$,
i.e.\ the definable sets are the same whether viewed in $\A$ or in $\B$.
Indeed, if $(f,g)$ is a bi-interpretation, then given a subset $X \subseteq  \A^n$, if 
$X$ is $\A$-definable then $f(X)$ is $\B$-definable, and conversely if $f(X)$ 
is $\B$-definable then $g(f(X))$ and hence $X$ is $\A$-definable.

We adopt the following terminology throughout the paper.
A {\em linear} Lie group is a closed Lie subgroup of $\GL_d(\R)$ for some $d \in 
\N$.
A Lie group is {\em simple} if it is connected and its Lie algebra is simple; a 
Lie algebra is simple if it is non-abelian and has no proper non-trivial 
ideal.
(A simple Lie group may have non-trivial discrete centre; however, the 
underlying abstract group of a {\em centreless} simple Lie group is simple.)
Our first main result describes definability in $\Goo(\Ru)$ for $G$ simple:

\begin{theorem} \label{t:mainBiterp}
  \label{T:MAINBITERP}
  Let $G$ be a simple compact linear Lie group.

  Let $\Ru \succ  \R$ be a proper real closed field extension of $\R$.

  Then the inclusion $\iota : \Goo(\Ru) \ensuremath{\lhook\joinrel\relbar\joinrel\rightarrow} G(\Ru)$, viewed as a definition 
  of the group $(\Goo(\Ru);*)$ in the valued field $(\Ru;+,\cdot,\O)$, can be 
  extended to a bi-interpretation: there is a definition $\theta$ of the 
  valued field $(\Ru;+,\cdot,\O)$ in the group $(\Goo(\Ru);*)$ such that the 
  pair $(\iota,\theta)$ form a bi-interpretation.

  In particular, the $(\Goo(\Ru);*)$-definable subsets of powers 
  $(\Goo(\Ru))^n$ are precisely the $(\Ru;+,\cdot,\O)$-definable subsets.
\end{theorem}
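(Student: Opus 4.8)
Here $\g$ denotes the (compact, simple) Lie algebra of $G$ and $\g(\m)=\g\otimes_\R\m\subseteq\g(\Ru)$. The plan is to construct the required definition $\theta$ by recovering the valued field inside the abstract group $(\Goo(\Ru);*)$ through the intermediate stages
\[ (\Goo(\Ru);*)\ \longrightarrow\ \bigl(\g(\m);+,[\cdot,\cdot]\bigr)\ \longrightarrow\ (\O;+,\cdot)\ \longrightarrow\ (\Ru;+,\cdot,\O), \]
and then to verify the bi-interpretation conditions for $(\iota,\theta)$. It is worth noting at the outset that $\Goo(\Ru)=\ker\bigl(\st_G\colon G(\O)\to G(\R)\bigr)$ is the first congruence subgroup of $G$ over the valuation ring $\O$ (with $\Ru=\operatorname{Frac}(\O)$), that $\Goo(\Ru)$ is divisible and torsion-free (standard for $G^{00}$ of a definably compact group, by the cited facts), and — since $\Ru$ is real closed, its value group is divisible, so $\m^2=\m$ — that $[\Goo(\Ru),\Goo(\Ru)]=\Goo(\Ru)$: the group is perfect, with no nontrivial nilpotent or solvable quotients.

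The last arrow is elementary: $\Ru=\operatorname{Frac}(\O)$ is interpretable in $(\O;+,\cdot)$, and in fact $\Ru$ is in definable bijection with a definable subset of $\O^2$ (send $x\in\O$ to $(x,0)$ and $x\notin\O$ to $(x^{-1},1)$, using $x^{-1}\in\m$), with field operations and the subring $\O$ definable; so $(\O;+,\cdot)$ and $(\Ru;+,\cdot,\O)$ are bi-interpretable via definitions. The middle arrow is where the hypotheses on $G$ are used: as $G$ is of \emph{compact} type, $\g$ is a compact real form, hence \emph{absolutely} simple, so $\g(\Ru)=\g\otimes_\R\Ru$ is central simple over $\Ru$ and its centroid is $\Ru$ acting by scalars. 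A short argument then identifies the centroid of the Lie \emph{ring} $\g(\m)$ with $\O$: for a single bracket the centroid identities give $f([\lambda x,y])=[x,\lambda f(y)]=\lambda f([x,y])$ for $\lambda\in\O$, hence $\O$-linearity on all of $\g(\m)=[\g(\m),\g(\m)]$; such an $f$ then extends $\Ru$-linearly to $\g(\Ru)$, where it is a scalar $\lambda_0$, necessarily in $\O$ since $\lambda_0\g(\m)\subseteq\g(\m)$. Thus $(\O;+,\cdot)$ is interpretable once $\bigl(\g(\m);+,[\cdot,\cdot]\bigr)$ is in hand.

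The first arrow is the heart of the matter: recovering from the bare group a copy of $\g(\m)$ with its additive and Lie structure. Fixing a semialgebraic chart of $G(\Ru)$ near $I$ (e.g.\ the graph chart from $G\subseteq\GL_d(\R)$) identifies $\Goo(\Ru)$, via a map compatible with $\st$, with $\g(\m)$ carrying a semialgebraic local group law $\mu$ whose linear part is $+$ and the antisymmetrization of whose quadratic part is $[\cdot,\cdot]$; but $+$ is not intrinsic to the abstract group, so one must instead proceed by a Lazard-style correspondence between the group and its Lie algebra. The $\Q$-powers (from unique extraction of roots) give a $\Q$-module structure, and the group commutator $c(v,w)=v*w*v^{-1}*w^{-1}$ carries the bracket in its leading bilinear term. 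To turn ``leading term'' into a first-order definition one exploits the congruence filtration: the normal subgroups $G(\mathfrak a)=\ker\bigl(G(\O)\to G(\O/\mathfrak a)\bigr)$, for proper ideals $\mathfrak a$ of $\O$, form a central filtration for the valuation topology ($[\Goo(\Ru),G(\mathfrak a)]\subseteq G(\mathfrak a\m)$, and $\mathfrak a^2\subsetneq\mathfrak a$ for principal $\mathfrak a$, so this filtration — unlike the derived series — is nontrivial), and relative to it the group law and commutator linearize; assembling the pieces recovers $+$ and $[\cdot,\cdot]$ on $\g(\m)$. One must also check that the congruence filtration itself, hence the relevant topology, is recoverable from the abstract group (via centralizers of regular elements and these subgroups), which is part of the work.

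Having assembled $\theta$, it remains to check the two composed self-interpretations. The composite that rebuilds $\Goo(\Ru)$, through its matrix description $G(\Ru)\cap(I+\Mat_d(\m))$, inside the valued field recovered by $\theta$ is $\Goo(\Ru)$-definable, since every ingredient above was produced by a group formula and translates back across $\iota$. The other composite requires that the valued field recovered inside $\Goo(\Ru)$, hence inside $(\Ru;+,\cdot,\O)$, be \emph{definably} — not merely abstractly — isomorphic to the original; here one traces the explicit chart, or invokes the paper's separate result that every definable field in a real closed convexly valued field is definably isomorphic to it, with extra care to pin down the recovered valuation ring (e.g.\ via its characterization as the convex hull of $\Z$). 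The step I expect to dominate is recovering $+$ and the Lie bracket from the bare group: the group is perfect, so no derived or lower central series is available, and one must extract the Lie structure Lazard-style from the congruence filtration, whose recovery from the abstract group — and whose interplay with divisibility — is the technical core. A secondary difficulty is the abstract-to-definable upgrade in the bi-interpretation check, where the ``definable fields in $\RCVF$'' result enters.
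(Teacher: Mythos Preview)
Your strategy---recover the Lie ring $\bigl(\g(\m);+,[\cdot,\cdot]\bigr)$ from the bare group, then read off $\O$ as its centroid---is genuinely different from the paper's, and the centroid step is fine once the Lie ring is in hand. The gap is the first arrow, and it is not a detail: you do not explain how to define $+$ in the pure group $(\Goo(\Ru);*)$.

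The obstruction is structural. Lazard/Mal'cev-type correspondences convert commutator calculus into the linear operations by climbing a \emph{discrete} central filtration (lower $p$-series, lower central series), so that BCH truncates at each stage. Here the group is perfect and the only filtration you propose---the congruence filtration $\{G(\mathfrak a)\}_{\mathfrak a\lhd\O}$---is densely ordered (the value group is divisible), so there is no ``next'' layer on which to read off the linear part, and the BCH series does not truncate. Any attempt to recover $x+y$ as a limit (e.g.\ $(x^{1/n}y^{1/n})^n$) or by quantifying over the ideals $\mathfrak a$ is not first-order. Moreover, you explicitly flag that the congruence filtration must itself be recovered from the abstract group ``via centralizers of regular elements,'' but you do not carry this out; a priori those centralizers are one-parameter subgroups with no visible scale, so it is not clear how they pick out $G(\mathfrak a)$ rather than just the germ of a Cartan. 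This is the step that must be supplied, and I do not see a route that avoids essentially redoing what the paper does.

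For comparison, the paper sidesteps the Lazard problem entirely. It first isolates a $\Goo$-definable copy of $\SO_3^{00}$ (via a double-centralizer and commutator argument in the root-space decomposition), then produces inside it a $\Goo$-definable ordered \emph{interval} on a one-dimensional centralizer by intersecting a product of conjugacy classes $g^{S}g^{S}$ with $C_S(g)$; this interval, with the pulled-back group law, is o-minimal and non-linear, so the o-minimal trichotomy theorem yields a real closed field directly, without ever reconstructing $+$ on all of $\g(\m)$. The valuation then falls out from the adjoint embedding and the equivalence of $(\Ru;\Goo)$ with $(\Ru;\O)$. Finally, the bi-interpretation check in the paper does \emph{not} use the ``definable fields in $\RCVF$'' theorem you cite; the constructed field is already $\Ru$-definable, and one applies \cite{OPP-groupsRings} in $\RCF$ to get the $\Ru$-definable isomorphism. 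That $\RCVF$ result is used later, for the Borel--Tits-style classification of isomorphisms, not for Theorem~\ref{t:mainBiterp}.
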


\begin{remark}
  The bi-interpretation of Theorem~\ref{t:mainBiterp} requires parameters. Indeed, 
  $\Goo(\Ru)$ has non-trivial inner automorphisms (this follows from 
  Lemma~\ref{l:Coo} below), which under a hypothetical parameter-free 
  bi-interpretation would induce definable non-trivial automorphisms of 
  $(\Ru;+,\cdot,\O)$.

  However, no such automorphism exists. We sketch a proof of this, following a 
  method suggested by Martin Hils. By Fact~\ref{f:semialgebraic}(1),
  if $\sigma$ is an $(\Ru;+,\cdot,\O)$-definable automorphism, then it agrees 
  on some infinite interval $I$ with an $(\Ru;+,\cdot)$-definable map $f$.
  But then for $a,b,c,d \in I$ with $c\neq d$,
  $\sigma(\frac{a-b}{c-d}) = \frac{\sigma(a)-\sigma(b)}{\sigma(c)-\sigma(d)} = 
  \frac{f(a)-f(b)}{f(c)-f(d)}$,
  so $\sigma$ is an $(\Ru;+,\cdot)$-definable field automorphism, thus 
  $\sigma=\id$.
\end{remark}

\begin{remark}
  For $\lieG$ a simple centreless compact Lie group, Nesin and Pillay 
  \cite{NP-compactLie} showed that the group itself, $(G;*)$, is 
  bi-interpretable with a real closed field. They interpret the field by 
  finding a copy of $(\SO_3;*)$ and using the geometry of its involutions. A 
  similar project is carried out in \cite{PPS-simpleAlgGrpsRCF} for definably 
  simple and semisimple groups in o-minimal structures. In the case of $\Goo$, 
  we also find a field by first finding a copy of $\SO_3^{00}$, but the 
  ``global'' approach of considering involutions is not available; in fact 
  $\Goo$ is torsion-free. Instead, we work ``locally'', and obtain the field 
  by applying the o-minimal trichotomy theorem to a definable interval on a 
  curve within $\SO_3^{00}$. This kind of local approach was previously 
  mentioned in an ``added in proof'' remark at the end of 
  \cite{PPS-simpleAlgGrpsRCF} as an alternative method for the case of 
  $(G;*)$, but we have to take care to ensure that structure we apply 
  trichotomy to is definable both o-minimally and in $(\Goo;*)$.
\end{remark}
\begin{remark} \label{r:otherValns}
  One might also consider ``smaller'' infinitesimal neighbourhoods 
  corresponding to larger valuation rings $\O' \supsetneq \O$; 
  Theorem~\ref{t:mainBiterp} holds for these too. More generally, if 
  $(\Ru';+,\cdot,\O') \vDash  \RCVF$ extends $\R$ as an ordered field, then the 
  group $G(\Ru) \cap \Mat_n(\mu')$ is bi-interpretable as in 
  Theorem~\ref{t:mainBiterp} with $(\Ru';+,\cdot,\O')$. Indeed, the existence of 
  suitable parameters for the bi-interpretation is expressed by a sentence in 
  $\RCVF$ with parameters in $\R$, and since $\RCVF$ is complete we can apply 
  Theorem~\ref{t:mainBiterp} to deduce the result.
\end{remark}

We prove Theorem~\ref{t:mainBiterp} in \secref{simple}.

In \secref{borel-tits}, we deduce in the spirit of Borel-Tits a 
characterisation of the group isomorphisms of groups of the form $\Goo$, 
decomposing them as compositions of valued field isomorphisms and isomorphisms 
induced by isomorphisms of Lie groups. In particular, this shows that simple 
compact $G_1$ and $G_2$ have isomorphic infinitesimal subgroups if and only if 
they have isomorphic Lie algebras. The key technical tool is 
Theorem~\ref{t:defbleFieldsRCVF}, which shows that there are no unexpected fields 
definable in $\RCVF = \Th(\Ru;+,\cdot,\O)$.

In \secref{DC}, we generalise Theorem~\ref{t:mainBiterp} to the setting of a 
definably compact group $G$ definable in an o-minimal expansion of a field. 
Here, to say that $G$ is \recalldefn{definably compact} means that any 
definable continuous map $[0,1) \rightarrow  G$ can be completed to a continuous map 
$[0,1] \rightarrow  G$; we refer to \cite{PS-defbleCompactness} for further details on 
this notion.

Define the {\bf disjoint} {\bf union} of 1-sorted structures $M_i$ to be the structure 
$(M_i)_i$ consisting of a sort for each $M_i$ equipped with its own structure, 
with no further structure between the sorts.

\begin{theorem} \label{t:DCBiterp}
  Let $(G;*)$ be an infinite definably compact group definable in a 
  sufficiently saturated o-minimal expansion $M$ of a field.
  Then $(\Goo(M);*)$ is bi-interpretable with the disjoint union of a 
  (possibly trivial) divisible torsion-free abelian group and finitely many 
  (possibly zero) real closed convexly valued fields.
\end{theorem}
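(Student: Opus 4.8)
The plan is to reduce Theorem~\ref{t:DCBiterp} to the simple compact case already handled by Theorem~\ref{t:mainBiterp}. First I would use the structure theory of definably compact groups in o-minimal expansions of fields: such a $G$ is, up to a definable isogeny, a direct product of a definably compact semisimple group $S$ and a definably compact abelian group $T$ (a ``definable torus''). At the level of $\Goo$, passing to a finite-index or finite-kernel isogeny does not change the infinitesimal subgroup, since $\Goo$ is divisible and torsion-free (it absorbs finite obstructions). So I expect $\Goo(M)$ to split, as a group, as a direct product $\Soo(M) \times \Too(M)$, and then the key point will be that this product is actually a \emph{disjoint union} in the sense defined above --- i.e.\ there is no definable interaction between the factors.

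Next I would analyse each factor. For the abelian part $\Too(M)$: $T$ is definably isomorphic to a quotient of $(R^k;+)$ by a lattice, so $\Too(M)$ is the infinitesimal subgroup of that, which is a divisible torsion-free abelian group; one must check that the only structure it carries is that of a $\Q$-vector space, i.e.\ a divisible torsion-free abelian group. This should follow because the pure group $(M^k;+)$ restricted to infinitesimals has no extra definable structure beyond addition --- an o-minimality/stability argument, or a direct weak elimination using that any definable subset of a power of a divisible ordered abelian group in an o-minimal structure is a finite union of ``boxes'' cut out by linear inequalities, and these become trivial on infinitesimals. For the semisimple part, $S$ is (again up to isogeny) a product of definably simple groups $S_1,\dots,S_m$, and each $S_i$, being definably simple and definably compact in an o-minimal expansion of a field, is --- by the work of Peterzil--Pillay--Starchenko cited in the second remark --- definably isomorphic to the $R$-points of a simple algebraic group over a real closed field $R_i$ definable in $M$; definable compactness forces $R_i$ to be real closed and $S_i(R_i)$ to be compact in the appropriate sense. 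Then $S_i^{00}$ is exactly a group of the form $\Goo$ to which Theorem~\ref{t:mainBiterp} applies (after noting $M$ restricted to the real closed field $R_i$ with its induced valuation is a model of $\RCVF$), giving a bi-interpretation of $S_i^{00}$ with a real closed convexly valued field. Assembling the $m$ field factors and the one vector-space factor gives the claimed disjoint union.

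The main obstacle I anticipate is proving that the decomposition is a bi-interpretation with the \emph{disjoint} union, rather than merely an isomorphism of groups with some a priori extra cross-sort definable structure. Concretely: (a) on the structure side I must produce a definable (in $(\Goo(M);*)$) direct-product decomposition, which requires recognising each $S_i^{00}$ and $\Too(M)$ inside $\Goo(M)$ group-theoretically --- the $S_i^{00}$ should be recoverable as the ``definable'' analogues of minimal normal subgroups / the derived subgroup's factors, using that each is perfect while $\Too(M)$ is abelian; and (b) I must show that a set definable in $(\Goo(M);*)$ and contained in a product of sorts is a Boolean combination of sets definable in the individual sorts --- a ``no interaction'' statement. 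For (b) the leverage is Theorem~\ref{t:mainBiterp}: via the bi-interpretations each sort carries exactly its RCVF-structure (or $\Q$-vector-space structure), and a stable-embeddedness / orthogonality argument between distinct real closed valued fields and the vector space --- these are pairwise non-interpretable and the ambient o-minimal structure imposes no definable pairing --- yields the disjointness. Making the orthogonality precise, probably by invoking saturation of $M$ and the completeness of $\RCVF$ together with Theorem~\ref{t:defbleFieldsRCVF} (no unexpected definable fields, hence no definable bijections between the sorts), is where the real work lies; the rest is bookkeeping over the structure theory of definably compact groups.
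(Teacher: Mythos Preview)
Your overall strategy matches the paper's: decompose $\Goo$ as a direct product of a central abelian part and finitely many simple pieces, apply Theorem~\ref{t:mainBiterp} to each simple piece, and assemble. But you have misidentified where the work lies.

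Your obstacle (b) is a non-issue. Once you establish that $\Goo$ is the \emph{definable internal direct product} of subgroups $H_1,\ldots,H_n$ --- i.e.\ each $H_i$ is definable in the pure group $(\Goo;*)$ and the multiplication map $\prod_i H_i \to \Goo$ is an isomorphism --- bi-interpretability with the disjoint union of the pure groups $(H_i;*)$ is immediate from the definitions (this is Lemma~\ref{l:biterpProd} in the paper, stated without proof because there is nothing to prove). No orthogonality or stable-embeddedness argument is required, and Theorem~\ref{t:defbleFieldsRCVF} plays no role in the proof of Theorem~\ref{t:DCBiterp}; it is used only for the Borel--Tits-style result in \secref{borel-tits}. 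Similarly, your worry that $\Too$ might carry extra structure is misplaced: the theorem only asserts bi-interpretability with a pure divisible torsion-free abelian group, and $Z(\Goo)$ \emph{is} such a group, so that sort is done as soon as it is identified.

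All the content is therefore in your point (a), and here your sketch is too vague. The paper proves concretely that $(G')^{00} = (\Goo,\Goo)$ and $Z(G)^{00} = Z(\Goo)$, so both are $(\Goo;*)$-definable; the first of these equalities is not automatic and relies on the result of d'Andrea--Maffei that $(H_i^{00},H_i^{00})_1 = H_i^{00}$ for each simple compact factor (equivalently, that $\Goo$ of a compact semisimple group is perfect with commutators of length one). One then passes from $(G')^{00}$ to $H^{00}$ where $H = G'/Z(G')$, using that $Z(G')$ is finite and $(G')^{00}$ is torsion-free, and finally recovers each simple factor $H_i^{00}$ inside $H^{00}$ as $\bigcap_{j\neq i} C_{H^{00}}(H_j^{00})$, invoking Lemma~\ref{l:Coo} to see these centralisers in $\Hoo$ agree with the centralisers in $H$. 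That is the argument you should flesh out; the rest is indeed bookkeeping.
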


To indicate why this is the correct statement, let us note that it can not be 
strengthened to bi-interpretability with a single real closed valued field as 
in Theorem~\ref{t:mainBiterp}: one reason is that $G$ could be commutative, and then 
$(\Goo;*)$ is just a divisible torsion free abelian group and thus does not 
even define a field; another reason is that groups are orthogonal in their 
direct product, so e.g.\ if $G = H\times H$ for a semialgebraic compact group 
$H$ then, viewing $\Goo$ as a definable set in a valued field 
$(\Ru;+,\cdot,\O)$ as above, the diagonal subgroup of $\Goo(\Ru) = 
\Hoo(\Ru)\times\Hoo(\Ru)$ is $(\Ru;+,\cdot,\O)$-definable but not 
$(\Goo(\Ru);*)$-definable.

Note that Theorem~\ref{t:DCBiterp} applies in particular to an arbitrary compact 
linear Lie group, since by Fact~\ref{f:chevalley}(ii) any such group is definable in 
the real field, and is definably compact.

\subsection{Acknowledgements}

We would like to thank Mohammed Bardestani, Alessandro Berarducci, Emmanuel 
Breuillard, Itay Kaplan, and Martin Hils for useful discussion.
We would also like to thank the Institut Henri Poincar\'e and the organisers of 
the trimester ``Model theory, combinatorics and valued fields'', where some of 
this work was done.
We also thank the anonymous referee for suggestions which improved the 
presentation of this paper substantially.
Greg Cherlin and Ali Nesin, on different occasions, have asked about the 
abstract group structure of the infinitesimal subgroup of a compact simple Lie 
group. This article can be seen as a partial answer to their questions.
Finally, we would like to mention the following question of Itay Kaplan which 
initiated the discussions which led to our results (even though our results have in 
the end almost nothing to do with the question): is any $\bigwedge$-definable field 
in an NIP theory itself NIP as a pure field?


\section{Preliminaries}
\label{s:prelims}
\subsection{Notation}
We consider $\Ru$ and $\R$ as fields and $\Goo(\Ru)$ as a group, thus 
``$\Goo(\Ru)$-definable'' means definable in the pure group $(\Goo(\Ru);*)$, 
and ``$\Ru$-definable'' means definable in the field $(\Ru;+,\cdot)$, while we 
write ``$(\Ru;\O)$-definable'' for definability in the valued field 
$(\Ru;+,\cdot,\O)$.

We use exponential notation for group conjugation, $g^h := hgh^{-1}$.
We write group commutators as $(a,b) := aba^{-1}b^{-1}$, reserving $[X,Y]$ for 
the Lie bracket.
For $G$ a group, we write $(G,G)_1$ for the set of commutators in $G$, 
$(G,G)_1 := \{ (g,h) : g,h \in G \}$,
and we write $(G,G)$ for the commutator subgroup, the subgroup generated by 
$(G,G)_1$.

For $G$ a group and $A$ a subset, we write the centraliser of $A$ in $G$ as 
$C_G(A) = \{ g \in G : \forall a \in A.\;(g,a) = e \}$,
and we write $Z(G)$ for the centre $Z(G)=C_G(G)$.
Similarly for $\g$ a Lie algebra and $A$ a subset, we write
the centraliser of $A$ in $\g$ as $C_\g(A) = \{ X \in \g : \forall Y \in A.\; 
[X,Y] = 0 \}$.
We write $C_G(g)$ for $C_G(\{g\})$ and $C_\g(X)$ for $C_\g(\{X\})$.

\subsection{Compact Lie groups}
Proofs of the statements in the following Fact can be found in \cite{OV} as 
Theorem~5.2.10 and Theorem~3.4.5 respectively.
\begin{fact}[Chevalley] \label{f:chevalley}
  \begin{enumerate}[(i)]\item Any compact Lie group $\lieG$ is linear; that is, $\lieG$ is 
  isomorphic to a Lie subgroup of $\GL_d(\R)$ for some $d$.
  \item Compact linear groups are algebraic; that is, any compact subgroup of 
  any $\GL_d(\R)$ is of the form $G(\R)$ for some algebraic subgroup $G \leq  
  \GL_d$ over $\R$.
  \end{enumerate}
\end{fact}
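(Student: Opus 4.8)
The plan for (i) is to invoke the Peter--Weyl theorem: on a compact Lie group $G$ the matrix coefficients of finite-dimensional (unitary) representations are dense in $C(G)$, and in particular they separate points, so the intersection $N = \bigcap_\rho \ker\rho$ over all finite-dimensional representations $\rho$ is trivial. Each $\ker\rho$ is a closed, hence Lie, subgroup; choosing $\rho_1,\dots,\rho_k$ so that $\dim \bigcap_{i\leq k}\ker\rho_i$ is minimal makes $\bigcap_{i\leq k}\ker\rho_i$ zero-dimensional, hence finite by compactness, and then adjoining finitely many further representations to separate the points of this finite group yields $\rho_1,\dots,\rho_m$ with $\bigcap_{i\leq m}\ker\rho_i = \{e\}$. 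The direct sum $\rho_1\oplus\dots\oplus\rho_m$ is then a faithful finite-dimensional representation, realising $G$ as a compact, hence closed, subgroup of some $\GL_d(\R)$.

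For (ii), let $G \leq \GL_d(\R)$ be a compact subgroup. First I would average the standard inner product over Haar measure to obtain a $G$-invariant positive-definite symmetric form $\beta$ on $\R^d$; the orthogonal group $O(\beta) \leq \GL_d$ of $\beta$ is an algebraic subgroup defined over $\R$, its group of $\R$-points $O(\beta)(\R) \cong O(d,\R)$ is compact, and $G \leq O(\beta)(\R)$. Next, let $\bar G \leq \GL_d$ be the Zariski closure of $G$ over $\R$; by the standard fact that the Zariski closure of a subgroup of an algebraic group is again an algebraic subgroup, $\bar G$ is an algebraic subgroup over $\R$ with $G \leq \bar G(\R)$, and since $\bar G \leq O(\beta)$ the group $\bar G(\R)$ is compact. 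It then remains to upgrade $G \leq \bar G(\R)$ to an equality. By Zariski density $\dim \bar G(\R) = \dim G$, so $G$ is an open, hence closed, subgroup of the compact Lie group $\bar G(\R)$, i.e.\ a union of Euclidean connected components of $\bar G(\R)$; Zariski density also forces $G$ to surject onto the finite quotient $\bar G / \bar G^0$ by the Zariski-identity-component, which reduces matters to the case $\bar G$ Zariski-connected, where one checks directly that a compact group and a Zariski-dense subgroup of the same dimension coincide.

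I expect the last step of (ii) to be the main obstacle: everything up to ``$G \leq \bar G(\R)$ with $\bar G(\R)$ compact'' is soft, and the real content is that compactness of $G$ must be used a second time — here through the containment $\bar G \leq O(\beta)$ which forces $\bar G(\R)$ compact — together with the dimension-and-components bookkeeping needed to rule out $\bar G(\R)$ being strictly larger than $G$. An alternative and perhaps cleaner route to (ii) goes through the theory of complexifications: the Zariski closure of $G$ in $\GL_d(\C)$ is a connected complex reductive group having $G$ as a maximal compact subgroup, and the anti-holomorphic involution fixing $\R^d$ cuts out precisely $G$ as its real points. In either case the full details are in \cite{OV} (Theorems 5.2.10 and 3.4.5).
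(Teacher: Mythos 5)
The paper does not prove this Fact; it simply cites \cite{OV} (Theorems 5.2.10 and 3.4.5), so the comparison is really between your sketch and the literature. Your Peter--Weyl argument for (i) is correct and standard (one small omission: if the separating representations come out complex/unitary you should restrict scalars to land in a real $\GL$, but that is routine).

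For (ii), however, there is a genuine gap at the sentence ``By Zariski density $\dim \bar G(\R) = \dim G$.'' Zariski density of a closed Lie subgroup does \emph{not} in general preserve dimension: the dense winding line $\{(e^{it},e^{i\alpha t})\}\subseteq U(1)^2$ with $\alpha$ irrational is a $1$-dimensional Lie subgroup whose Zariski closure is all of $U(1)^2$. What rules this out in your situation is precisely compactness of $G$, and establishing the dimension (or Lie-algebra) equality is the real content of Chevalley's theorem --- not a formal consequence of density that can be cited and then used to reduce to ``components bookkeeping.'' The standard ways to close the gap use compactness a second time in an essential way: either show that the Lie algebra of a compact linear group is an \emph{algebraic} Lie subalgebra of $\mathfrak{gl}_n(\R)$ (the route in \cite{OV}, via complete reducibility and the structure of reductive Lie algebras), or argue directly with Haar averaging --- e.g.\ if $G\subsetneq K:=\bar G(\R)$ with both compact, pick a non-constant continuous left-$G$-invariant function on $K$, approximate it by a polynomial, and average that polynomial over $G$; the result is a non-constant $G$-invariant polynomial, which by Zariski density must be $K$-invariant and hence constant, a contradiction. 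Either way, the step you delegated to ``Zariski density'' is where compactness must reappear, and as written the proposal circles back to asserting the theorem it is trying to prove. The ``complexification'' alternative you mention has the same issue: the claim that the Zariski closure in $\GL_d(\C)$ has $G$ as a maximal compact subgroup is again the substance, not a shortcut.
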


\begin{lemma} \label{l:Coo}
  Suppose $H$ is a connected closed Lie subgroup of a compact linear Lie group 
  $G$.

  Then $C_{G(\Ru)}(H(\Ru)) = C_{G(\Ru)}(H^{00}(\Ru))$.
\end{lemma}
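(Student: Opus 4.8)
The inclusion $C_{G(\Ru)}(H(\Ru)) \sub C_{G(\Ru)}(H^{00}(\Ru))$ is immediate since $H^{00}(\Ru) \le H(\Ru)$, so the content is the reverse inclusion: an element of $G(\Ru)$ centralising the infinitesimal subgroup $H^{00}(\Ru)$ already centralises all of $H(\Ru)$. The plan is to pass to the Lie algebra. Since $H$ is a connected Lie subgroup, it is generated by $\exp(\h)$ for $\h = \operatorname{Lie}(H)$; more precisely, working over $\Ru$, the algebraic group $H(\Ru)$ is connected (as $H$ is connected as a real algebraic group, being a connected Lie group that is algebraic by Fact~\ref{f:chevalley}), so it is generated by any infinitesimal neighbourhood of the identity, in particular by $H^{00}(\Ru)$. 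The key point I would isolate is exactly this: \emph{$H^{00}(\Ru)$ generates $H(\Ru)$ as an abstract group}. Granting that, if $g \in G(\Ru)$ centralises $H^{00}(\Ru)$ then it centralises the subgroup generated by $H^{00}(\Ru)$, which is $H(\Ru)$, and we are done.

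To see that $H^{00}(\Ru)$ generates $H(\Ru)$, I would argue as follows. The group $H^{00}(\Ru) = H(\Ru) \cap (I + \Mat_d(\m))$ contains $\exp(t X)$ for every $X \in \h$ and every $t \in \m$: indeed $\exp$ is given by a convergent power series with $\exp(tX) \in I + \Mat_d(\m)$ when $tX \in \Mat_d(\m)$, and $\exp(tX) \in H(\Ru)$ because the one-parameter subgroup lies in $H$. Fixing any $X \in \h$ and any infinitesimal $t_0 \in \m \setminus \{0\}$, the element $\exp(t_0 X)$ lies in $H^{00}(\Ru)$, and powers $\exp(n t_0 X) = \exp(t_0 X)^n$ for $n \in \N$ lie in the subgroup generated by $H^{00}(\Ru)$. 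Since $\m$ is a \emph{convex} subgroup of $(\Ru;+)$ and $\Ru$ is non-archimedean over $\R$, these multiples $n t_0$ reach outside $\m$ (indeed they are cofinal in $\O$ is false, but one does obtain every standard-sized element); more robustly, I would instead directly observe that $\{\exp(sX) : s \in \m\}$ together with its translates generate the whole one-parameter subgroup $\{\exp(sX) : s \in \Ru\}$, because any $s \in \Ru$ can be written as a finite sum $s = s_1 + \dots + s_k$ with each $s_i \in \m$ (as $\m$ is a nontrivial subgroup of the divisible ordered group $(\Ru;+)$, hence unbounded below the scale of any fixed element — more carefully, for $s \ne 0$ pick $s_i = s/k$ with $k$ large enough that $s/k \in \m$, using that $\m$ is exactly the set of elements infinitesimal relative to $\R$ and $s/k \to 0$... in $\R$-terms, $s/k \in \m$ once $|s/k| < 1/n$ for all $n$, which fails). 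The clean statement to use is: $\m$ is the maximal ideal, so for $s \in \O$ nonzero, $s \in \m$ iff $1/s \notin \O$; and for arbitrary $s \in \Ru$ one has $s \in \m \cdot \O^{-1}$ is not helpful. The correct and simplest route: $H(\Ru)$ is a connected algebraic group over $\Ru$, and for a connected algebraic group over any field, any Zariski-dense subset generates in finitely many steps; $H^{00}(\Ru)$ is Zariski-dense in $H(\Ru)$ since it contains the infinitesimal neighbourhood $\exp(\Mat_d(\m) \cap \h)$, which is not contained in any proper subvariety (a one-parameter subgroup restricted to an infinite set of parameters is Zariski-dense in its closure). Hence $H^{00}(\Ru)$ generates $H(\Ru)$.

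The main obstacle I anticipate is the bookkeeping in the last step — making precise that $H^{00}(\Ru)$ is Zariski-dense in $H(\Ru)$ and invoking the right "density implies generation" statement for connected algebraic groups over a non-closed field of characteristic zero (one standard version: if $X$ is a subvariety of a connected algebraic group $G$ with $e \in X = X^{-1}$ and $X$ Zariski-dense, then $\bigcup_n X^n = G$). An alternative that sidesteps algebraic-geometry technology: show directly that $H^{00}(\Ru)$ contains a neighbourhood basis at the identity for the $\Ru$-topology restricted to $H(\Ru)$ in the sense relevant to $H$ being $\bigwedge$-definably connected, and use that a connected topological group is generated by any neighbourhood of the identity. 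Either way the geometric input is routine; I would present whichever is shortest given the conventions already in play, probably citing Fact~\ref{f:DCLit} or the algebraic structure of $H$ for connectedness of $H(\Ru)$.
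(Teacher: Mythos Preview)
Your core claim --- that $H^{00}(\Ru)$ generates $H(\Ru)$ as an abstract group --- is false, and this is not a bookkeeping issue but a genuine gap. The set $H^{00}(\Ru)$ is already a subgroup of $H(\Ru)$, so the subgroup it generates is itself; and it is a \emph{proper} subgroup whenever $\Ru \neq \R$ (indeed $H(\Ru)/H^{00}(\Ru) \cong H(\R)$ via the standard part map). The Zariski-density result you invoke, ``if $X \ni e$ is symmetric and Zariski-dense in a connected algebraic group then $\bigcup_n X^n = G$'', requires $X$ to be constructible (a subvariety, or at least definable in $\ACF$); it fails for arbitrary Zariski-dense subsets, and any proper Zariski-dense subgroup is a counterexample. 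Likewise, your topological alternative fails: $H^{00}(\Ru)$ is not open in $H(\Ru)$ for the $\Ru$-topology --- it is only the intersection of the $\R$-definable open neighbourhoods of $e$, a type-definable set with empty interior.

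The paper's argument supplies exactly the missing step. One first reduces (by completeness of $\RCVF$ over $\R$) to the case where $\Ru$ is $\aleph_1$-saturated. Then if $x$ centralises $H^{00}(\Ru) = \bigcap_n U_n(\Ru)$ with the $U_n$ an $\R$-definable neighbourhood basis at $e$, saturation forces $x$ to centralise some single $U_n(\Ru)$: otherwise the definable set $\{h \in H(\Ru) : (x,h) \neq e\}$ would meet every $U_n$, hence by saturation would meet their intersection $H^{00}(\Ru)$. Now $U_n$ \emph{is} an honest open neighbourhood of $e$ in the compact connected group $H$, so $U_n$ generates $H$ in finitely many steps, and by elementarity $U_n(\Ru)$ generates $H(\Ru)$. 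The saturation/compactness move from the type-definable $H^{00}$ up to a definable $U_n$ is the idea your proposal is missing.
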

\begin{proof}
  Since $G$ and $H$ are algebraic by Fact~\ref{f:chevalley}(ii), the conclusion can 
  be expressed as a first-order sentence in the complete theory $\RCVF_\R$ of 
  a non-trivially convexly valued real closed field extension 
  $(\Ru;+,\cdot,\O)$ of the trivially valued field $\R$, so we may assume 
  without loss that $\Ru$ is $\aleph_1$-saturated.

  Suppose $x \in C_{G(\Ru)}(H^{00}(\Ru))$.
  It follows from $\aleph_1$-saturation of $\Ru$ that $x \in 
  C_{G(\Ru)}(U(\Ru))$ for some $\R$-definable neighbourhood of the identity $U 
  \subseteq  H$, since $H^{00}(\Ru)$ is the intersection of such (e.g.\ as in 
  \eqnref{Goo}).
  Now we could argue from general results on o-minimal groups (see 
  \cite[Lemma~2.11]{P-groupsFieldsOMin}) and connectedness that 
  $C_{G(\Ru)}(U(\Ru)) = C_{G(\Ru)}(H(\Ru))$, but we can also argue directly as 
  follows.
  $H$ is compact and connected, thus is generated in finitely many steps from 
  $U$. Since $\Ru$ is an elementary extension of $\R$, it follows that 
  $H(\Ru)$ is generated by $U(\Ru)$.
  Hence $x \in C_{G(\Ru)}(H(\Ru))$.
\end{proof}

\subsection{$\SO(3)$}
We recall some elementary facts about the group of spatial rotations 
$\SO(3)=\SO_3(\R)$, its universal cover $\Spin(3)$, and their common Lie 
algebra $\so(3)$, as discussed in e.g.\ \cite[Chapter~6]{woit}.

Any rotation $g \in \SO(3)$ can be completely described as a planar rotation 
$\alpha \in \SO(2)$ around an axis $L$, where $L$ is a ray from the origin in 
$\R^3$. We can identify such a ray $L$ with the unique element of the unit 
sphere $S^2$ which lies on the ray. Writing $\rho : \SO(2) \times S^2 \twoheadrightarrow  
\SO(3)$ for the corresponding map, the only ambiguities in this description 
are that $\rho(\alpha,L) = \rho(-\alpha,-L)$, and the trivial rotation is $e = 
\rho(0,L)$ for any $L$. The centraliser in $\SO(3)$ of a non-trivial 
non-involutary rotation $g=\rho(\alpha,L)$, $2\alpha \neq  0$, is the subgroup 
$C_{\SO(3)}(g) = \rho(\SO(2),L) \cong  \SO(2)$ of rotations around $L$. The 
conjugation action of $\SO(3)$ on itself is by rotation of the axis: 
$\rho(\alpha,L)^g = \rho(\alpha,gL)$, where $gL$ is the image of $L$ under the 
canonical (matrix) left action of $g$ on $\R^3$. The map $\rho$ is continuous 
and $\R$-definable.

This description transfers to non-standard rotations: given $\Ru \succ  \R$,
$\rho$ extends to a map $\rho : \SO_2(\Ru) \times S^2(\Ru) \twoheadrightarrow  \SO_3(\Ru)$.
The infinitesimal rotations are then the rotations about any (non-standard) 
axis by an infinitesimal angle;
i.e.\ $\rho$ restricts to a map $\SO_2^{00}(\Ru) \times S^2(\Ru) \twoheadrightarrow  
\SO_3^{00}(\Ru)$.

The universal group cover of $\SO(3)$ is denoted $\Spin(3)$; the corresponding 
continuous covering homomorphism $\pi : \Spin(3) \rightarrow  \SO(3)$ is a local 
isomorphism with kernel $Z(\Spin(3)) \cong  \pi_1(\SO(3)) \cong  \Z/2\Z$. Since 
$\Spin(3)$ is compact, by Fact~\ref{f:chevalley}(i) we may represent it as $\Spin(3) 
= \Spin_3(\R)$ where $\Spin_3$ is a linear Lie group. Since $\pi$ is an 
$\R$-definable local isomorphism, it induces an isomorphism of Lie algebras 
and an isomorphism of infinitesimal subgroups $\Spin_3^{00}(\Ru) \cong  
\SO_3^{00}(\Ru)$, with respect to which the action by conjugation of $g \in 
\Spin_3(\Ru)$ on $\Spin_3^{00}(\Ru)$ agrees with the action by conjugation of 
$\pi(g)$ on $\SO_3^{00}(\Ru)$.

The Lie algebra $\so(3) \cong  \su(2)$ has $\R$-basis $\{H,U,V\}$ and bracket 
relations
\begin{equation} \label{e:so3}
  [U,V]=H,\; [H,U]=V,\; [V,H]=U .
\end{equation}

The adjoint action $\Ad$ of $g \in \SO(3)$ on $\so(3)$ is by the left matrix 
action with respect to this basis, $\Ad_g(X) = gX$, and similarly for $g \in 
\Spin(3)$ the adjoint representation is $\pi$, i.e.\ $\Ad_g(X) = \pi(g)X$. 

\subsection{Lie theory in o-minimal structures}
\label{s:nsLie}
We recall briefly the Lie theory of a group $G$ definable in an o-minimal 
expansion of a real closed field $R$; see \cite{PPS-defblySimple} for further 
details, but in fact we apply it only to linear algebraic groups, for which it 
agrees with the usual theory for such groups. The Lie algebra of $G$ is the 
tangent space at the identity $\g = L(G) = T_e(G)$, a finite dimensional 
$R$-vector space. For $h \in G(R)$, define $\Ad_h : \g \rightarrow  \g$ to be the 
differential of conjugation by $h$ at the identity, $\Ad_h := d_e(\cdot^h)$, 
and define $\ad : \g \rightarrow  \End_R(\g)$ as the differential of $\Ad : G \rightarrow  
\Aut_R(\g)$ at the identity, $\ad := d_e(\Ad)$. Then the Lie bracket of $\g$ 
is defined as $[X,Y] := \ad_X(Y)$.

The statements above about the adjoint action of $\SO_3$ and $\Spin_3$ on 
$\so_3$ transfer to $\Ru$: for $X \in \so_3(\Ru)$, we have that $\Ad_g(X) = gX$ 
for $g \in \SO_3(\Ru)$, and $\Ad_g(X) = \pi(g)X$ for $g \in \Spin_3(\Ru)$.

\section{Proof of Theorem~\ref{t:mainBiterp}}
\label{s:simple}

Let $G$ and $\Ru$ be as in Theorem~\ref{t:mainBiterp}, namely $G$ is a simple 
compact linear Lie group and $\Ru$ is a proper real closed field 
extension\footnote{
Readers familiar with model theory might be made more comfortable by an 
assumption that $\Ru$ is (sufficiently) saturated. They may in fact freely 
assume this, since the conclusion of the theorem can be seen to not depend on 
the choice of $\Ru$.}
of $\R$.

%
%

In this section, we write $\Goo$ for the group $\Goo(\Ru)$.

We first give an overview of the proof. We begin in \secref{SO300} by finding a 
copy of $\SO(3)$ or $\Spin(3)$ in $G$ which is definable in such a way that 
its infinitesimal subgroup is $\Goo$-definable. A reader who is interested 
already in the case $G=\SO(3)$ may prefer to skip that section on a first 
reading. In \secref{interval} we use the structure of $\SO_3^{00}$ to find in 
it an interval on a centraliser which is definable both in the group and the 
field. In \secref{trich} we see that the non-abelianity of $G$ endows this 
interval with a rich enough structure to trigger the existence of a field by 
the o-minimal trichotomy theorem. Finally, in \secref{Goo}, we use an adjoint 
embedding to see the valuation on this field and obtain bi-interpretability.

\subsection{Finding an $\SO_3^{00}$}
\label{s:SO300}
In this subsection, we find a copy of $\so(3)$ in the Lie algebra of $G$ which 
is the Lie algebra of a Lie subgroup $S \leq  G$, and which moreover is defined 
in such a way that the infinitesimal subgroup $\Soo \leq  \Goo$ is 
$\Goo$-definable.

Let $\g_0 := L(\lieG)$ be the Lie algebra of $\lieG$.

\begin{lemma} \label{l:so3}
  There exist Lie subalgebras $\s \leq  \s' \leq  \g_0$ such that
  \begin{enumerate}[(i)]\item $\s' = C_{\g_0}(C_{\g_0}(\s'))$
  \item $\s = [\s',\s']$
  \item $\s \cong  \so(3)$.
  \end{enumerate}
\end{lemma}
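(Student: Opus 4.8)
\medskip

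The plan is to produce inside the compact semisimple Lie algebra $\g_0$ a copy of $\so(3) \cong \su(2)$ whose "double centraliser hull" behaves well, by exhibiting an explicit $\mathfrak{sl}_2$-triple and then identifying $\s'$ as a reductive subalgebra. First I would invoke the structure theory of compact semisimple Lie algebras: since $\g_0$ is the Lie algebra of a simple compact Lie group, $\g_0$ is simple (in particular semisimple and non-abelian), and its complexification $\g_0 \otimes \C$ is a complex simple Lie algebra. Pick any nonzero root $\alpha$ of $\g_0 \otimes \C$ with respect to a maximal torus; the corresponding root spaces together with the coroot span a subalgebra isomorphic to $\mathfrak{sl}_2(\C)$, and intersecting with the compact real form gives a subalgebra $\s \leq \g_0$ with $\s \cong \su(2) \cong \so(3)$. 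This secures (iii). Alternatively, and perhaps more cleanly for a definability-minded reader, one can simply note that any rank-one compact subalgebra does the job, and such exist because $\g_0$ is non-abelian: take any $0 \neq X \in \g_0$, and inside the (compact, hence reductive) centraliser tower one finds a simple rank-one factor, which must be $\so(3)$ since that is the only compact simple Lie algebra of rank one.

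\medskip

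For (i) and (ii), I would set $\s' := C_{\g_0}(C_{\g_0}(\s))$ — the double centraliser of the $\so(3)$-copy just found — and then argue that this $\s'$ satisfies all three conditions (replacing the original $\s$ by $[\s',\s']$). The key structural input is that in a compact Lie algebra every centraliser $C_{\g_0}(A)$ is a reductive subalgebra (it is the Lie algebra of a closed subgroup of a compact group, hence compact, hence reductive), and that for reductive subalgebras the double-centraliser operation is idempotent: $C_{\g_0}(C_{\g_0}(C_{\g_0}(A))) = C_{\g_0}(A)$, which gives condition (i) immediately for $\s' = C_{\g_0}(C_{\g_0}(\s))$. Since $\s'$ is reductive, $\s' = Z(\s') \oplus [\s',\s']$ with $[\s',\s']$ semisimple; and since $\s \leq \s'$ is a simple non-abelian ideal-free subalgebra sitting inside the double centraliser of itself, $\s$ lands in the semisimple part $[\s',\s']$. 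The remaining point is to arrange that $\s = [\s',\s']$ exactly, i.e.\ that the semisimple part of the double centraliser of $\so(3)$ is just that $\so(3)$ and nothing more. This need not hold for an arbitrary rank-one subalgebra, so the real content is to choose $\s$ carefully: I would choose $\s$ to be a \emph{maximal} rank-one compact subalgebra, or equivalently the subalgebra generated by a root-$\mathfrak{sl}_2$ for a \emph{long} root (or, if one prefers to avoid root-length subtleties, a root-$\mathfrak{sl}_2$ whose centraliser has been computed to have no further rank-one simple factor commuting with all of $\s$). Then by construction $[\s',\s'] = \s$.

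\medskip

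The main obstacle I anticipate is exactly this last matching: proving $\s = [\s',\s']$ rather than merely $\s \subseteq [\s',\s']$. Concretely, if $\g_0$ contains two commuting copies of $\so(3)$ (e.g.\ $\so(4) \cong \so(3) \oplus \so(3)$ sitting inside a larger $\g_0$), then a naive choice of $\s$ would have $[\s',\s']$ strictly larger than $\s$. The way around this is to go back to the root-space description: for a root $\alpha$, the centraliser of the associated $\mathfrak{sl}_2^\alpha$ in $\g_0 \otimes \C$ is controlled by which other roots are orthogonal to $\alpha$, and by choosing $\alpha$ appropriately (for instance the highest root, whose orthogonal complement in the root system is well understood) one controls $[\s',\s']$ precisely. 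I would carry this out by the standard Dynkin-diagram bookkeeping: the highest root $\theta$ of a simple complex Lie algebra has the property that $C_{\g}(\mathfrak{sl}_2^\theta)$ is reductive with a semisimple part whose simple summands are read off from deleting nodes of the (extended) Dynkin diagram, and one checks case-by-case that $\mathfrak{sl}_2^\theta$ is not one of those summands — or, more uniformly, one observes that $\theta$ being the highest root forces $\theta$ to be the \emph{unique} root (up to sign) in its own $\mathbb{Z}$-span among roots in $\mathfrak{sl}_2^\theta \oplus C_{\g}(\mathfrak{sl}_2^\theta)$, so the semisimple part of the double centraliser cannot contain a second commuting $\mathfrak{sl}_2$ forming a direct factor with the first. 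Passing back to the compact real form then yields $\s = [\s',\s']$, completing the proof. Everything else — reductivity of centralisers, idempotency of the double-centraliser operation, existence of \emph{some} $\so(3)$ — is standard compact Lie theory and I would cite it rather than reprove it.
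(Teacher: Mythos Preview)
Your route differs from the paper's in the choice of $\s'$: you set $\s' := C_{\g_0}(C_{\g_0}(\s))$, so that (i) is automatic from the Galois-connection identity $C(C(C(A)))=C(A)$, whereas the paper sets $\s' := \h_0 \oplus \l_\alpha$, the sum of a compact Cartan $\h_0$ and $\l_\alpha := \g_0 \cap (\g_\alpha \oplus \g_{-\alpha})$ for a single root $\alpha$, so that (ii) is immediate from the bracket relations and (i) is what must be checked. These two candidate $\s'$'s need not coincide (e.g.\ in type $D_3$ yours is strictly smaller), but either is legitimate once the remaining condition is verified.

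The gap is your verification of (ii). You correctly flag $[\s',\s']=\s$ as the crux, but the remedies you propose---restricting to a long or highest root and case-checking Dynkin diagrams, or the ``$\theta$ is the unique root in its $\Z$-span'' remark---are either not carried out or too vague to constitute a proof. In fact no restriction on $\alpha$ is needed, and the clean argument is essentially the paper's computation: since $\ker(\alpha|_{\h_0}) \subseteq C_{\g_0}(\s_\alpha)$, any root vector in $C_{\g_0}(C_{\g_0}(\s_\alpha))$ must commute with $\ker(\alpha|_{\h_0})$, which forces its root to lie in $\R\alpha \cap \Delta = \{\pm\alpha\}$. Hence your $\s'$ is contained in the paper's $\h_0 \oplus \l_\alpha$, and $[\s',\s'] = \s_\alpha$ follows at once. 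This is precisely the calculation the paper uses to establish (i) for its own $\s'$, so once your gap is filled the two arguments converge. Note too that your $\so(4)$ worry cannot arise for a root $\mathfrak{sl}_2$: if $\s_\beta$ commutes with $\s_\alpha$ then $\s_\beta \subseteq C_{\g_0}(\s_\alpha)$, and so $\s_\beta$ cannot lie in $C_{\g_0}(C_{\g_0}(\s_\alpha))$, since a nonabelian algebra does not centralise itself.
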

\begin{proof}
  In this proof, and in this proof alone, we assume familiarity with the basic 
  theory and terminology of the root space decomposition of a semisimple Lie 
  algebra. This can be found in e.g.\ \cite[\S\S II.4, II.5]{Knapp}.


  We write $V^*$ for the dual space of a vector space $V$.

  Let $\g := \g_0 \otimes_\R \C$ be the complexification of $\g_0$, and let
  $\overline{\cdot} : \g \rightarrow  \g$ be the corresponding complex conjugation.

  Let $\h_0 \leq  \g_0$ be a Cartan subalgebra of $\g_0$, meaning that the 
  complexification $\h := \h_0 \otimes_\R \C$ is a Cartan subalgebra of $\g$; we 
  can take $\h_0 := L(\lieT)$ where $\lieT$ is a maximal torus of $\lieG$ 
  (where a torus of $\lieG$ is a Lie subgroup isomorphic to a power of the 
  circle group).

  Now since $\lieG$ is simple, $\g$ is semisimple and so admits a root space 
  decomposition $\g = \h \oplus \bigoplus_{\alpha \in \Delta} \g_\alpha$ where each root 
  space $\g_\alpha$ is the 1-dimensional eigenspace of $\ad(\h)$ with 
  eigenvalue $\alpha \in \h^* \setminus \{0\}$, i.e.\ if $H \in \h$ and $X \in \g_\alpha$ 
  then $[H,X] = \alpha(H)X$. The roots $\alpha$ span $\h^*$. If $\alpha \in 
  \Delta$ then $\C\alpha \cap \Delta = \{\alpha,-\alpha\}$.
  We have $[\g_\alpha,\g_{-\alpha}] \leq  \h$.

  Since $\lieG$ is compact, each root $\alpha \in \Delta$ takes purely 
  imaginary values on $\h_0$, thus $i\alpha\negmedspace\restriction_{\h_0} \in \h_0^*$, and the 
  subalgebra $\s_{\alpha}$ of $\g_0$ generated by $\l_\alpha := \g_0 \cap 
  (\g_{\alpha} \oplus \g_{-\alpha})$ is isomorphic to $\so(3)$ (see 
  \cite[Proposition~26.4]{FultonHarris}, or \cite[(4.61)]{Knapp}).

  Explicitly, if $E_\alpha \in \g_\alpha \setminus \{0\}$, so $\overline{E_\alpha} \in 
  \g_{-\alpha}$, then $\l_\alpha$ is spanned by $U_\alpha := iE_\alpha - 
  i\overline{E_\alpha}$ and $V_\alpha := E_\alpha + \overline{E_\alpha}$.
  Then for $H \in \h_0$ we have
  \begin{equation} \label{e:adl}
    [H,U_\alpha] = i\alpha(H) V_\alpha \;\;\text{and}\;\; [H,V_\alpha] = 
    -i\alpha(H) U_\alpha ,
  \end{equation}
  and $[U_\alpha,V_\alpha] = 2i[E_\alpha,\overline{E_\alpha}] \in \s_\alpha 
  \cap \h_0$. Using that $\g_0$ admits an $\Ad(\lieG)$-invariant inner 
  product, one can argue (see the proof of \cite[(4.56)]{Knapp} for details) 
  that $\alpha([E_\alpha,\overline{E_\alpha}]) < 0$, and hence that after 
  renormalising $E_\alpha$, the $\R$-basis
  $\{[U_\alpha,V_\alpha], U_\alpha, V_\alpha\}$
  for $\s_\alpha$ satisfies the standard bracket relations \eqnref{so3} of 
  $\so(3)$.

  Let $\alpha_0 \in \Delta$.
  Let $\s := \s_{\alpha_0}$ and $\l := \l_{\alpha_0}$,
  and let $\s' := \h_0 \oplus \l$.
  It follows from the bracket relations above that $\s'$ is a subalgebra of 
  $\g_0$, and the commutator subalgebra $[\s',\s']$ is precisely $\s$.

  It remains to show that $\s' = C_{\g_0}(C_{\g_0}(\s'))$.

  It follows from \eqnref{adl} and $C_{\g_0}(\h_0)=\h_0$ that
  $C_{\g_0}(\s') = \h_0 \cap C_{\g_0}(\l) = \ker(\alpha_0\negmedspace\restriction_{\h_0})$.

  Now for $\alpha' \in \Delta$ and $W \in \g_{\alpha'} \setminus \{0\}$, we have 
  $[W,\ker(\alpha_0\negmedspace\restriction_{\h_0})] = 0$ if and only if $\ker(\alpha'\negmedspace\restriction_{\h_0}) \supseteq 
  \ker(\alpha_0\negmedspace\restriction_{\h_0})$;
  since $i\alpha'\negmedspace\restriction_{\h_0},i\alpha_0\negmedspace\restriction_{\h_0} \in \h_0^*$, this holds if and 
  only if $\alpha' \in \R\alpha_0 \cap \Delta = \{\alpha_0,-\alpha_0\}$.
  Thus
  $C_{\g_0}(C_{\g_0}(\s')) = \g_0 \cap (\h + \g_{\alpha_0} + \g_{-\alpha_0}) = 
  \s'$, as required.
\end{proof}

  Before the next lemma we recall some facts and terminology from Lie theory.
  An \recalldefn{integral subgroup} of $\lieG$
  is a connected Lie group which is an abstract subgroup of $\lieG$ via an
  inclusion map which is an immersion. An integral subgroup is a Lie subgroup
  iff it is closed in $\lieG$ (see
  \cite[Proposition~III.6.2.2]{Bourbaki-Lie}). The map $\lieH \mapsto  L(\lieH)$ of
  taking the Lie algebra is a bijection between integral subgroups of $\lieG$
  and Lie subalgebras of $\g_0=L(\lieG)$ (see
  \cite[Theorem~III.6.2.2]{Bourbaki-Lie}).

\begin{lemma} \label{l:spin3}
  There exists a closed Lie subgroup $S \leq  \lieG$ isomorphic to either 
  $\Spin(3)$ or $\SO(3)$, such that the infinitesimal subgroup $\Soo = S(\Ru) 
  \cap \Goo \cong  \SO_3^{00}$ is $\Goo$-definable as a subgroup of $\Goo$.
\end{lemma}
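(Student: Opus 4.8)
The plan is to take the Lie subalgebras $\s \leq \s' \leq \g_0$ produced by Lemma~\ref{l:so3} and realise them as Lie algebras of subgroups of $\lieG$, then cut down to the infinitesimal level. Since $\s'$ is a Lie subalgebra of $\g_0 = L(\lieG)$, by the Lie-algebra/integral-subgroup correspondence recalled before the statement, there is a unique integral subgroup $S' \leq \lieG$ with $L(S') = \s'$, and likewise an integral subgroup $S \leq \lieG$ with $L(S) = \s$. First I would argue that $S'$ and $S$ are in fact \emph{closed}, hence honest Lie subgroups: $\s'$ is reductive (it is $\h_0 \oplus \l_{\alpha_0}$, with $\l_{\alpha_0} \cong \so(3)$ its semisimple part and a compact torus direction), and inside the compact group $\lieG$ an integral subgroup with reductive Lie algebra has compact — in particular closed — closure; one then checks the closure has the same Lie algebra, using that $\s'$ is its own normaliser in the relevant sense, or more simply invokes that $S$, being (covered by) a copy of a compact semisimple group, is automatically closed in a compact group (this is standard; see e.g.\ the discussion around Lemma~\ref{l:Coo}). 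Given that $S$ is a closed connected Lie subgroup with $L(S) = \s \cong \so(3)$, its universal cover is $\Spin(3)$ and $S$ itself is one of the two quotients $\Spin(3)$ or $\SO(3)$; call this the topological identification, and note $\Soo = S(\Ru) \cap \Goo \cong \SO_3^{00}$ follows since $S \to S/Z$ is a definable local isomorphism inducing an isomorphism on infinitesimal subgroups (as recorded for $\pi : \Spin(3) \to \SO(3)$ in the $\SO(3)$ subsection).

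The real content — and the main obstacle — is the \emph{definability} of $\Soo$ as a subgroup of $(\Goo;*)$, \emph{without} parameters naming $\s'$ or $\s$. Here I would exploit clause (i) of Lemma~\ref{l:so3}, $\s' = C_{\g_0}(C_{\g_0}(\s'))$, together with Lemma~\ref{l:Coo}. The idea: $\Soo$ and $S'^{00}$ are $\Goo$-definable from the single parameter consisting of (a generator, or a small definable piece, of) $S'^{00}$, because on the infinitesimal level the Lie-theoretic operations of taking centralisers become \emph{definable} group-theoretic operations. Concretely, for $H$ a connected closed Lie subgroup of $G$, Lemma~\ref{l:Coo} gives $C_{G(\Ru)}(H(\Ru)) = C_{G(\Ru)}(H^{00}(\Ru))$, and the latter is manifestly definable from any parameters defining $H^{00}(\Ru) \leq \Goo$; passing to Lie algebras via the adjoint/log correspondence (which is $\Ru$-definable on $\Goo$, cf.\ the adjoint discussion in \secref{nsLie}), the equation $\s' = C_{\g_0}(C_{\g_0}(\s'))$ transfers to the statement that $S'^{00}$ is the double centraliser in $\Goo$ of $C_{\Goo}(S'^{00})$, and that $\Soo = (S'^{00}, S'^{00})$ (the infinitesimal commutator subgroup), matching clause (ii). So once $S'^{00}$ is pinned down, $\Soo$ is definable from it by a group-theoretic formula.

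It therefore remains to choose $S'^{00}$ definably. The cleanest route: observe that the double-centraliser condition (i) says $\s'$ is \emph{recovered} from any nonzero element $X \in \s'$ lying in a sufficiently generic position — specifically, take $X \in \h_0$ with $\alpha_0(X) \neq 0$ but, say, $X$ regular so that $C_{\g_0}(X) = \h_0$; then $C_{\g_0}(C_{\g_0}(X))$ recovers $\h_0$, and $\s' = \h_0 + \g_{\alpha_0} + \g_{-\alpha_0}$ requires in addition picking out the relevant root line. A slightly safer alternative, which I expect to be what the authors do, is to work with $S'$ rather than $\s'$ from the start and use a single element $s \in S'^{00}$ whose centraliser in $\Goo$ is exactly $C_{\Goo}(S'^{00})$ — such $s$ exists by saturation since $S'^{00}$ is the intersection of the definable neighbourhoods $U(\Ru)$ of Lemma~\ref{l:Coo}'s proof and centralisers only decrease — so that $\Soo$ is $\Goo$-definable over the parameter $s$. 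Either way, the key points to verify carefully are: (a) that the adjoint representation on $\Goo$ is $\Ru$-definable and identifies $C_{\Goo}(-)$ with $C_{\g_0}(-)$ on the infinitesimal scale, so that Lemma~\ref{l:so3}(i)--(ii) really do become definable group-theoretic identities in $(\Goo;*)$; and (b) that the resulting $S \leq G$ is closed. Step (b) is routine Lie theory; step (a), and the bookkeeping ensuring no unwanted parameters creep in, is where the care is needed.
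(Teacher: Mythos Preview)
Your overall strategy matches the paper's: realise $\s \leq  \s'$ as closed subgroups $S \leq  S'$, show $(S')^{00}$ is $\Goo$-definable via the double-centraliser property and Lemma~\ref{l:Coo}, then recover $\Soo$ as the commutator. However, two steps have genuine gaps. First, your claim that a single $s \in (S')^{00}$ satisfies $C_{\Goo}(s) = C_{\Goo}((S')^{00})$ is false: for any such $s$, the infinitesimal one-parameter subgroup through $s$ lies in $C_{\Goo}(s)$, but since $S'$ is non-abelian (it contains a copy of $\SO(3)$) this subgroup does not centralise all of $S'$ and hence is not contained in $C_{\Goo}(S') = C_{\Goo}((S')^{00})$. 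The paper instead applies the descending chain condition for centralisers in o-minimal groups to find a \emph{finite} set $X_0 \subseteq  C_{\Goo}(S') \subseteq  \Goo$ with $C_G(X_0) = C_G(C_{\Goo}(S'))$, whence $(S')^{00} = C_{\Goo}(X_0)$; your saturation idea, carried out correctly, would likewise only produce a finite set, not a single element. Second, you write $\Soo = ((S')^{00},(S')^{00})$ as a commutator \emph{subgroup}, but a commutator subgroup is a priori only a countable union of definable sets. One needs the non-obvious fact $(\SO_3^{00},\SO_3^{00})_1 = \SO_3^{00}$ (every element is already a single commutator, via \cite{dAndreaMaffei} or a direct argument) to conclude that $\Soo$ equals the definable \emph{set} of commutators $((S')^{00},(S')^{00})_1$.

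A smaller point: for closedness of $S'$ the paper uses the Bourbaki result that the Lie-algebra identity $\s' = C_{\g_0}(C_{\g_0}(\s'))$ lifts to the group-level identity $S' = C_G(C_G(S'))$. This is not just a convenience for closedness: it is then fed into the chain of equalities (together with Lemma~\ref{l:Coo}) establishing $(S')^{00} = C_{\Goo}(C_{\Goo}(S'(\Ru)))$, so even if your reductivity argument for closedness went through, you would still need this group-level double centraliser to make the ``transfer'' in your step (a) precise.
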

\begin{proof}
  Let $\s$ and $\s'$ be as in Lemma~\ref{l:so3}.
  Let $\lieS$ and $\lieS'$ be the integral subgroups of $\lieG$ with Lie 
  algebras $\s$ and $\s'$, respectively.
  Since $\s' = C_{\g_0}(C_{\g_0}(\s'))$, we have by 
  \cite[Proposition~III.9.3.3]{Bourbaki-Lie} that $\lieS' = 
  C_{\lieG}(C_{\lieG}(\lieS'))$.
  In particular, $\lieS'$ is closed.

  Since $[\s',\s']=\s$ and $\s$ is an ideal in $\s'$, we have by 
  \cite[Proposition~III.9.2.4]{Bourbaki-Lie} that the commutator subgroup 
  $(\lieS',\lieS')$ is equal to $\lieS$.
  Now $\s$ is isomorphic to $\so(3)$ and $\Spin(3)$ is simply connected, thus 
  (by \cite[Theorem~III.6.3.3]{Bourbaki-Lie}) $\lieS$ is the image of a 
  non-singular homomorphism $\Spin(3) \rightarrow  \lieG$ with central kernel,
  and so since $\Spin(3)$ is compact and its centre is of order 2, $\lieS$ is 
  a closed Lie subgroup of $\lieG$ isomorphic either to $\Spin(3)$ or to 
  $\SO(3) \cong  \Spin(3) / Z(\Spin(3))$.



  Now $S$ is a closed Lie subgroup of $G$, and thus is a compact linear Lie 
  group, and its infinitesimal subgroup $\Soo$ is correspondingly the subgroup 
  $S(\Ru) \cap \Goo$ of $\Goo$. The same goes for the closed Lie subgroups 
  $S'$ and $S'' := C_G(S')$ of $G$.

  \begin{claim}
    $(S')^{00} = C_{\Goo}(C_{\Goo}(S'(\Ru)))$.
  \end{claim}
  \begin{proof}
    \begin{align*} (S')^{00}
    &= S'(\Ru) \cap \Goo \\
    &= C_{G(\Ru)}(C_{G(\Ru)}(S'(\Ru))) \cap \Goo \\
    &= C_{\Goo}(C_{G(\Ru)}(S'(\Ru))) \\
    &= C_{\Goo}(S''(\Ru)) \\
    &= C_{\Goo}((S'')^{00}) & \text{(by Lemma~\ref{l:Coo})} \\
    &= C_{\Goo}(C_{G(\Ru)}(S'(\Ru)) \cap \Goo) \\
    &= C_{\Goo}(C_{\Goo}(S'(\Ru))) .\end{align*}
  \end{proof}

  By the Descending Chain Condition for definable groups in o-minimal 
  structures, \cite[Corollary~1.16]{PPS-defblySimple}, there exists a finite 
  set $X_0\subseteq C_{G^{00}}(S')\subseteq G^{00}$ such that 
  $C_G(C_{G^{00}}(S'))=C_G(X_0)$. Thus,
  $$S^{00}=C_G(X_0)\cap G^{00}=C_{G^{00}}(X_0)$$ is $\Goo$-definable.


  Meanwhile, for the commutator subgroup, we have
  $((S')^{00},(S')^{00}) \subseteq  (S',S')^{00} = S^{00}$.
  Now $(\SO_3^{00},\SO_3^{00})_1 = \SO_3^{00}$;
  in fact \cite{dAndreaMaffei} proves $(\Goo,\Goo)_1=\Goo$ for any compact 
  semisimple Lie group, but one can also see it directly in this case,
  since $(\SO_3^{00},\SO_3^{00})_1$ is invariant under conjugation by 
  $\SO_3(\Ru)$ and can be seen to contain infinitesimal rotations of all 
  infinitesimal angles.
  So since $(S')^{00} \supseteq S^{00} \cong  \SO_3^{00}$,
  we have $((S')^{00},(S')^{00})_1=S^{00}$.
  Hence $\Soo$ is $\Goo$-definable.

  Thus $S$ is as required.
\end{proof}

\subsection{Finding a group interval in $\Soo$}
\label{s:interval}
Let $S \leq  G$ be as given by Lemma~\ref{l:spin3}. Now $S$ is isomorphic to $\SO_3$ or 
$\Spin_3$ via an isomorphism which has compact graph and hence is 
$\R$-definable, so let the $\R$-definable map $\pi : S \twoheadrightarrow  \SO_3$ be this 
isomorphism or its composition with the universal covering homomorphism 
respectively. Then $\pi$ induces an isomorphism $\pi : S^{00} \xrightarrow{\cong} 
\SO_3^{00}$.

From now on, we work in $\Ru$, and consider $S$, $G$, $\SO_2$, $\SO_3$, and 
$\Spin_3$ as $\R$-definable (linear algebraic) groups rather than as Lie 
groups; moreover, {\bf we write $S$ for $S(\Ru)$}, and similarly with $G$, 
$\SO_2$, $\SO_3$, and $\Spin_3$.

Let $g \in \Soo \setminus \{e\}$. Then $C_S(g) = \pi^{-1}(C_{\SO_3}(\pi(g)))$, and 
$C_{\SO_3}(\pi(g)) \cong  \SO_2$ since $g$ is not torsion.
Thus the circular order on $\SO_2$ induces\footnote{
Explicitly, say $\pi(g) = \rho(\alpha,L)$; then if $\pi_{(1,2)} : \SO_2 \rightarrow  
\Ru$ extracts the top-right matrix element,
then
$x_1 < x_2 \Leftrightarrow  \pi_{(1,2)}(\beta_1) < \pi_{(1,2)}(\beta_2)$ where $\pi(x_i) = 
\rho(\beta_i,L)$
defines an order on such a neighbourhood of the identity in $C_S(g)$.
}
an $\Ru$-definable linear order on a neighbourhood of the identity in $C_S(g)$ 
containing $C_{\Soo}(g)$, and the induced linear order on $C_{\Soo}(g)$ makes 
it a linearly ordered abelian group.
Moreover, the order topology on $C_{\Soo}(g)$ coincides with the group 
topology.
We may assume that $g$ is positive with respect to this ordering.

\begin{lemma} \label{l:groupInterval}
  There exists an open interval $J \subseteq  C_{\Soo}(g)$ containing $e$ such that 
  $J$ and the restriction to $J$ of the order on $C_{\Soo}(g)$ are both 
  $\Soo$-definable and $\Ru$-definable.
\end{lemma}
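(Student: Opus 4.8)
The plan is to produce $J$ by intersecting two intervals: one carved out group-theoretically inside $C_{\Soo}(g)$, and one carved out field-theoretically, and then to check that on the intersection the two notions of order agree. First I would note that $C_{\Soo}(g)$, as a linearly ordered abelian group with order topology equal to group topology, has a neighbourhood basis of the identity consisting of symmetric order-intervals $(-a,a)$, and each such interval is determined purely by the group structure together with the order. So the real issue is to get the order (restricted to a small enough interval) to be simultaneously $\Soo$-definable and $\Ru$-definable. The $\Ru$-definability of the order on a neighbourhood of $e$ in $C_S(g)$ is already established in the discussion preceding the lemma (via $\pi_{(1,2)}$), so what must be arranged is $\Soo$-definability of this order on some interval.

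To get $\Soo$-definability of the order, I would use the structure of $\SO_3^{00}$ transported through $\pi$. The centraliser $C_{\Soo}(g)$ corresponds under $\pi$ to $\SO_2^{00}$, i.e.\ the infinitesimal planar rotations about a fixed axis $L$; the positive cone should be expressible in terms of the conjugation action of $\Soo$ (equivalently of $\SO_3^{00}$) on $C_{\Soo}(g)$. Concretely, for $h \in \Soo$ near $e$, the commutator $(h,x)$ for $x \in C_{\Soo}(g)$ measures, to first order, the rotation of the axis $L$ by $x$ against the displacement induced by $h$; choosing $h$ with axis transverse to $L$, the sign of some coordinate of $(h,x)$ — itself detectable inside $\Soo$ by a further centraliser/commutator condition — distinguishes $x > e$ from $x < e$ on a small enough interval. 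Thus I expect to be able to write down an $\Soo$-formula $\phi(x,y)$, with parameters from $\Soo$, which defines the relation $x < y$ on some open interval $J_0 \ni e$ of $C_{\Soo}(g)$. Then $\pi$ and the explicit formula for the order give that the same relation is $\Ru$-definable on $J_0$ (possibly after shrinking), since both orders are definable from the ambient field and they are two linear orders on the same one-dimensional definable group inducing the same topology, hence they agree or are reverse on a small interval, and we may pass to a symmetric subinterval on which they literally coincide by flipping sign if needed.

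Having fixed such a $J_0$ with an order that is both $\Soo$-definable and $\Ru$-definable, I would finally choose $J \subseteq J_0$ to be a symmetric open interval $(-a,a)$ around $e$ with $a \in J_0$, $a > e$. Such an interval is $\Soo$-definable from the parameter $a$ (using the group operation and the $\Soo$-definable order) and likewise $\Ru$-definable, and the order restricted to $J$ inherits both definabilities. The main obstacle will be the middle step: extracting from the geometry of $\SO_3^{00}$ — using only $\Soo$-internal data, namely commutators and iterated centralisers — a formula that pins down the positive cone on an interval. The subtlety flagged in the paper's overview is precisely that the structure we hand to trichotomy later must be definable both ways, so here I must resist using the sign of a matrix entry directly (that is $\Ru$-data, not $\Soo$-data) and instead reconstruct the order from the conjugation action, accepting that this only works locally, i.e.\ on an interval rather than on all of $C_{\Soo}(g)$.
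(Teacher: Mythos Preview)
Your proposal has a genuine gap at the crucial step. You assert that ``the sign of some coordinate of $(h,x)$'' is ``itself detectable inside $\Soo$ by a further centraliser/commutator condition'', but you never say what that condition is, and there is a circularity obstruction. To first order, if $x$ is an infinitesimal rotation by angle $\alpha$ about $L$ and $h$ is an infinitesimal rotation about a transverse axis, then $(h,x)$ is (approximately) an infinitesimal rotation about a third axis $M$ by an angle proportional to $\alpha$; in particular $(h,x^{-1})$ is (approximately) $(h,x)^{-1}$. So detecting the sign of $\alpha$ via $(h,x)$ amounts to detecting the sign of the angle of $(h,x)$ --- the same problem, transported to $C_{\Soo}(g_M)$ for some $g_M$ with axis $M$. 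Iterating does not break the loop, and there is no evident $\Soo$-definable property distinguishing an infinitesimal rotation from its inverse without an order already in hand. The last paragraph of your proposal correctly identifies that this is exactly the delicate point, but the body of the proposal does not resolve it.

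The paper's argument avoids this circularity by a quite different mechanism: it manufactures a \emph{reference point} rather than a sign-detecting formula. One first forms the $\Ru$-definable set $I := g^S g^S \cap C_S(g)$ (using full $S$-conjugacy, not $\Soo$-conjugacy) and shows by a connectedness argument that $I$ is a closed symmetric interval $[-h,h]$ in $C_{\Soo}(g)$. The endpoint $h$ now sits in $C_{\Soo}(g)$ as a parameter. Next, for suitable $g_1,g_2 \in \Soo$ one considers the $\Soo$-definable set $Y_{g_1,g_2} := g_1^{\Soo} g_2^{\Soo} \cap C_{\Soo}(g) \subseteq [-h,h]$, and a definable-choice/continuity argument shows $Y_{g_1,g_2}$ contains an interval $(h'',h]$ abutting the endpoint. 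Translating by $-h$ yields an $\Soo$-definable $P$ with $[0,p) \subseteq P \subseteq [0,2h]$, from which one extracts $(0,p) = P \cap (p-P)$. Then $J := (-p,p)$ and the order $a \geq b \Leftrightarrow a-b \in [0,p)$ are $\Soo$-definable, and visibly $\Ru$-definable as well. The idea you are missing is that the \emph{boundary} of an $\Ru$-definable conjugacy-class product furnishes the anchor that pins an $\Soo$-definable set to the non-negative cone.
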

\begin{proof}
  First, consider the $\Ru$-definable set
  $I := g^Sg^S \cap C_S(g)$,
  where $g^Sg^S := \{ g^{a_1}g^{a_2} : a_1,a_2 \in S \}$.
  Since $\Soo$ is normal in $S$, in fact $I \subseteq  C_{\Soo}(g)$.

  Say a subset $X$ of a group is \recalldefn{symmetric} if it is closed 
  under inversion, i.e.\ $X^{-1}=X$.

  \begin{claim}
    $I$ is a closed symmetric interval in $C_{\Soo}(g)$.
  \end{claim}
  \begin{proof}
    Recall that a definable set in an o-minimal structure is 
    \recalldefn{definably connected} if it is not the union of disjoint 
    open definable subsets.

    $X := g^Sg^S$ is the image under a definable continuous map of the 
    definably connected closed bounded set $(g^S)\times(g^S)$, and hence 
    (\cite[1.3.6,6.1.10]{vdD-omin}) $X$ is closed and definably connected.

    Now $X$ is invariant under conjugation by $S$. Thus $\pi(X) \subseteq  \SO_3^{00}$ 
    consists of the rotations around arbitrary axes by the elements of some 
    $\Ru$-definable set $\Theta \subseteq  \SO_2^{00}$, i.e.\ $\pi(X) = 
    \rho(\Theta,S^2(\Ru))$. Since $\rho(\theta,L)^{-1} = \rho(\theta,-L)$, it 
    follows that $\pi(X)$ is symmetric, and hence $\Theta$ is symmetric. 
    Similarly, $\pi(g^S)$ is symmetric, and hence $e \in \pi(X)$ and so $e \in 
    \Theta$.
    
    Recall that if $L$ is the axis of rotation of $\pi(g)$ (i.e.\ $\pi(g) = 
    \rho(\alpha,L)$ for some $\alpha$), then $\pi(C_S(g)) = \rho(\SO_2,L)$. So 
    then $\pi(I)=\rho(\Theta,L)$. Since $\pi(X)$ is closed and definably 
    connected, $\Theta$ is of the form $\Theta' \cup \Theta'^{-1}$ where 
    $\Theta'$ is a closed interval. Thus since $\Theta$ contains the identity, 
    $\Theta$ is itself a closed symmetric interval. So $I$ is a closed 
    symmetric interval in $C_{\Soo}(g)$.
  \end{proof}

  Say $I=[h^{-1},h]$.
  Write the group operation on $C_{\Soo}(g)$ additively.
  For $g_1,g_2 \in \Soo$, let $Y_{g_1,g_2} := g_1^{\Soo} g_2^{\Soo}\cap 
  C_{\Soo}(g)$ (an $\Soo$-definable set). Clearly, $Y_{g_1,g_2}\subset 
  I=[-h,h]$.

  \begin{claim}
    There exist\footnote{
    Although this existence statement suffices for our purposes, in fact one 
    may calculate that we may take $h=g^2$ and $g_1=g=g_2$.
    This can be seen by combining the proof of the claim with the following 
    observations on $\Spin_3$ considered as the group of unit quaternions. 
    Firstly, if $a,b \in \Spin_3(\R)$ have the same scalar part, 
    $\Re(a)=\Re(b)$, then $\Re(a*b) \geq  \Re(a*a)$, with equality iff $a=b$.
    Secondly, conjugation in $\Spin_3$ preserves scalar part.
    Finally, the order on $C_{\Soo}(g)$ agrees (up to inversion) with the 
    order on the scalar part.
    }
    $h'' < h$ and $g_1,g_2 \in \Soo$ such that the interval $(h'',h]$ is 
    contained in $Y_{g_1,g_2}$.
  \end{claim}
  \begin{proof}
    Consider the map $f:S^2 \rightarrow  S$ defined by $f(a_1,a_2) := g^{a_1}g^{a_2}$.
    By definable choice in $\Ru$,
    $f$ admits a $\Ru$-definable section over the set $I$,
    and hence $f$ admits a continuous $\Ru$-definable section on an open 
    interval $\theta : (h',h) \rightarrow  S^2$ for some $h' \in C_{\Soo}(g)$.
    By definable compactness of $S^2$, $\theta$ extends to a continuous 
    $\Ru$-definable section $\theta : (h',h] \rightarrow  S^2$.
    Say $\theta(h) = (a_1,a_2) \in S^2$.
    Let $g_i := g^{a_i}$ for $i=1,2$. Note that $g_i \in \Soo$.

    So by continuity of $\theta$, for some $h'' < h$ we have
    \[ (h'',h] \subseteq  \theta^{-1}((\Soo a_1)\times(\Soo a_2)) \subseteq  f((\Soo 
    a_1)\times(\Soo a_2)) = g_1^{\Soo}g_2^{\Soo} .\]
  \end{proof}

  Thus
  \[ P := h - Y_{g_1,g_2} \subseteq  h - I = [0,2h] \]
  is an $\Soo$-definable subset of the non-negative part of $C_{\Soo}(g)$.

  So set $p := h-h''>0$. Note that $[0,p) \subseteq  P \subseteq  [0,2h]$.
  It is now easy to see that the open interval $(0,p)$ is equal to $P\cap 
  (p-P)$, thus is definable in $\Soo$.

  So $J := (-p,p) = [0,p) \cup -[0,p)$ and its order are $\Soo$-definable, 
  since for $a,b \in J$ we have $a\geq b$ iff $a-b \in [0,p)$.

  Finally, $J = (-p,p)$ and its order are also $\Ru$-definable as an interval 
  in the $\Ru$-definable order on an $\R$-definable neighbourhood of the 
  identity in $C_S(g)$. This ends the proof of Lemma~\ref{l:groupInterval}.
\end{proof}

\subsection{Defining the field}
\label{s:trich}
Let $J$ be as given by Lemma~\ref{l:groupInterval}. We now return to working with 
the full simple compact group $G$, of which $J$ is a subset. Let $n := 
\dim(G)$.
Recall that $J$ consists of
 a neighborhood of the identity in the one-dimensional real algebraic group group $C_S(g)$,
 thus it is a one-dimensional smooth sub-manifold  of $G$. For $h\in G$, the set $J^h$ is an open neighborhood of $e$ in the group $C_S(g)^h$. We let $T_e(J^h)$ denote its tangent space at $e$ with respect to the real closed field $R$.

\begin{lemma} \label{l:chart}
  There exists an open neighbourhood $U \subseteq  \Goo$ of the identity, an open 
  interval $e \in J' \subseteq  J$, and a bijection
  $\phi : J'^n \rightarrow  U$
  which is both $\Goo$-definable and $\Ru$-definable,
  with $\phi(x,e,\ldots ,e) = x$.
\end{lemma}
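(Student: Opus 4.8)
The plan is to build $\phi$ by iterating the multiplication map in a way that records $n$ "coordinates" along conjugates of the group interval $J$. Concretely, choose group elements $h_1=e,h_2,\dots,h_n\in G$ so that the tangent spaces $T_e(J^{h_1}),\dots,T_e(J^{h_n})$ span the Lie algebra $\g=T_e(G)$; this is possible because $J$ is a one-dimensional submanifold through $e$ whose tangent line $T_e(C_S(g))$ is a nonzero vector in $\g$, and the $\Ad$-action of $G$ on $\g$ generates a $G$-invariant subspace, which — since $\g$ is simple and the action is nontrivial — must be all of $\g$. Then define $\psi:J^n\to G$ by $\psi(x_1,\dots,x_n):=x_1^{h_1}x_2^{h_2}\cdots x_n^{h_n}$ (a product in $G$, each factor lying in $J^{h_i}\subseteq G$). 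This map is $\Ru$-definable by construction, and it is $\Goo$-definable because $J$ is $\Goo$-definable by Lemma~\ref{l:groupInterval}, conjugation is a group operation, and the product of elements of $\Goo$ stays in $\Goo$ (each $x_i^{h_i}$ need not lie in $\Goo$, but we will restrict to a subinterval on which it does — see below).

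The key computation is the differential of $\psi$ at $(e,\dots,e)$: since $\psi(e,\dots,x_i,\dots,e)=x_i^{h_i}$ and the map $x\mapsto x^{h_i}$ has differential $\Ad_{h_i}$ at $e$, the derivative $d_{(e,\dots,e)}\psi$ sends the $i$-th coordinate line onto $T_e(J^{h_i})$, so by the spanning choice it is a linear isomorphism of $R$-vector spaces $J^n$-tangent-space $\to\g$. By the inverse function theorem in the o-minimal structure $\Ru$ (\cite{vdD-omin}), $\psi$ restricts to an $\Ru$-definable homeomorphism from some open box $(J')^n$ around $(e,\dots,e)$ onto an open neighbourhood $U_0$ of $e$ in $G$. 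We then shrink $J'$ further so that $J'\subseteq\Goo$ (which we may, as $\Goo$ is a $\bigwedge$-intersection of neighbourhoods of $e$ in $C_S(g)$ and $J'$ can be taken inside $C_{\Soo}(g)$): once every factor $x_i^{h_i}$ lies in $\Goo$ — using normality of $\Goo$ in $G$ — the product does too, so $U:=\psi((J')^n)\subseteq\Goo$ is open in $\Goo$. Replacing $\psi$ by $\phi(x_1,\dots,x_n):=x_1\cdot\psi(e,x_2,\dots,x_n)$, i.e.\ arranging that the first coordinate is reproduced on the nose by composing with a translation, gives $\phi(x,e,\dots,e)=x$; alternatively one simply notes $\psi(x,e,\dots,e)=x^{h_1}=x$ already if $h_1=e$, so no correction is needed.

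The main obstacle is ensuring simultaneous $\Goo$- and $\Ru$-definability of the \emph{same} map, and in particular that the open set $U$ produced by the inverse function theorem (an a priori $\Ru$-definable notion) is also $\Goo$-definable and is genuinely open in the group topology of $\Goo$. This is handled by the observation, already used in Lemma~\ref{l:groupInterval}, that on $C_{\Soo}(g)$ the order topology and group topology agree, so a basic open box in $(J')^n$ is $\Goo$-definable via the $\Goo$-definable order on $J$; its image under the $\Goo$-definable map $\psi$ is then $\Goo$-definable, and bijectivity of $\psi$ on the box (from the inverse function theorem, a statement about the $\Ru$-structure but one that transfers since $\psi$ is the same map) lets us write $U$ and the inverse $\phi^{-1}$ as $\Goo$-definable. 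A secondary point requiring care is the spanning claim for the tangent lines $T_e(J^{h_i})$: one must check that $\g$ being simple forces the $\Ad(G)$-orbit of a single nonzero vector to span, which is immediate since the span of such an orbit is a nonzero $\Ad(G)$-invariant subspace, hence a nonzero ideal of $\g$, hence all of $\g$.
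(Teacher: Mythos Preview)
Your approach is essentially the paper's---define $\phi$ as a product of conjugates $x_1^{h_1}\cdots x_n^{h_n}$ with $h_1=e$, check the differential at $(e,\dots,e)$ is an isomorphism, and apply the inverse function theorem---but there is a genuine gap in your $\Goo$-definability claim.

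You take $h_2,\dots,h_n\in G$ and then assert that $\psi$ is $\Goo$-definable because ``conjugation is a group operation''. But conjugation by $h_i$ is only definable in the pure group $(\Goo;*)$ if $h_i$ is available as a parameter, i.e.\ if $h_i\in\Goo$. For $h_i\in G\setminus\Goo$ the map $x\mapsto h_i x h_i^{-1}$ does send $\Goo$ to $\Goo$ (by normality, as you later note), but there is no reason this map should be definable in $(\Goo;*)$: the only parameters at your disposal are elements of $\Goo$. Your parenthetical worry that $x_i^{h_i}$ might leave $\Goo$ is misplaced for exactly this normality reason; the real obstruction is the parameter $h_i$ itself, not where the image lands.

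The paper handles this by choosing the $h_i$ inside $\Goo$ from the outset, which requires showing that already $\{\Ad_h(T_e(J)):h\in\Goo\}$ spans $\g(\Ru)$. This is a slightly more delicate version of your spanning argument: the span $V$ is $\Ad_{\Goo}$-invariant, so $\Ad$ restricts to a map $\Goo\to\Aut_\Ru(V)$, and differentiating at the identity yields $\ad:\g(\Ru)\to\End_\Ru(V)$; thus $V$ is a nonzero ideal and hence all of $\g(\Ru)$ by simplicity. Your version, using the full $\Ad(G)$-orbit, establishes spanning with conjugators ranging over $G$, but that is not what you need for $\Goo$-definability of $\phi$.
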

\begin{proof}
  Let $\g = T_e(G)$ be the Lie algebra of $G$ (as discussed in 
  \secref{nsLie}).
  Consider the $\Ru$-subspace $V \leq  \g(\Ru)$ generated by
  \[ \bigcup_{h \in \Goo} T_e(J^h) = \bigcup_{h \in \Goo} \Ad_h(T_e(J)) .\]
  Then $V$ is $\Ad_{\Goo}$-invariant.
  Thus $\Ad$ restricts to $\Ad : \Goo \rightarrow  \Aut_\Ru(V)$, and hence the 
  differential at the identity is a map $\ad = d_e \Ad : \g(\Ru) \rightarrow  
\End_\Ru(V)$, i.e.\ it follows that $V$ is $\ad_{\g(\Ru)}$-invariant.
  So $V$ is a non-trivial ideal in $\g(\Ru)$.
  But $\g$ is simple, thus we have $V=\g(\Ru)$.

  So say $h_1,h_2,\ldots ,h_n \in \Goo$ are such that $T_e(J^{h_i})$ span $T_eG$.
  Conjugating by $h_1^{-1}$, we may assume $h_1=e$.

  Define $\phi : J^n \rightarrow  \Goo$ by
  \[ \phi(x_1,\ldots ,x_n) := x_1^{h_1}x_2^{h_2}\ldots x_n^{h_n} ,\]
  which is clearly both $\Goo$-definable and $\Ru$-definable.
  Then the differential $d_{(e,\ldots ,e)}\phi : T_eJ^n \rightarrow  T_eG(\Ru)$ is an 
  isomorphism.
  Thus by the implicit function theorem (for the real closed field $\Ru$),
  for some open interval $e \in J' \subseteq  J$,
  the restriction $\phi\negmedspace\restriction_{(J')^n}$ is a bijection with some open neighbourhood 
  $U$ of $e$,
  as required.

\end{proof}

Redefine $J$ to be $J'$ as provided by the lemma.

We shall consider the o-minimal structure obtained by expanding the interval 
$(J;<)$ by the pullback of the group operation near $e$ via the chart $\phi$.
As we will now verify, it follows from the non-abelianity of $G$ that the 
resulting structure on $J$ is ``rich'' in the sense of the o-minimal 
trichotomy theorem \cite{PS-trichotomy} (see below), and so by that theorem it 
defines a field. Let us first recall the relevant notions:

\begin{definition} Let $\mathcal M=( M;<,\cdots)$ be an o-minimal structure. A \emph{definable family of curves}
is given by a definable set $F\sub M^n\times T$, for some definable $T\sub M^k$, such that for every $t\in T$,
the set $F_t=\{a\in M^n:(a,t)\in F\}$ is of dimension $1$.

The family is called {\em normal} if for $t_1\neq t_2$, the set $F_{t_1}\cap F_{t_2}$ is finite. In this case,
the dimension of the family is taken to be $\dim T$.

A linearly ordered set $(I;<)$, together with a partial binary function $+$ and a constant $0$,  is called {\em a group-interval} if $+$ is continuous, definable
in a neighborhood of $(0,0)$,  associative and commutative when defined, order preserving in each coordinate, has $0$ as a neutral element and each element near $0$ has additive inverse in $I$.
\end{definition}

We shall use   Theorem 1.2 in \cite{PS-trichotomy}:
\begin{fact} \label{trich} Let $\mathcal I=(I;<,+,0,\cdots)$ be an 
  $\omega_1$-saturated o-minimal expansion of a group-interval.
Then one and only one of the following holds:
\begin{enumerate}
\item There exists an ordered vector space $\mathcal V$ over an ordered 
  division ring, such that $(I;<,+,0)$ is definably isomorphic to a 
  group-interval in $V$ and the isomorphism takes every $\mathcal I$-definable 
  set to a definable set in $\mathcal V$.

\item A real closed field is definable in $\mathcal I$, with its underlying set a subinterval of $I$ and its ordering compatible with $<$.
\end{enumerate}
\end{fact}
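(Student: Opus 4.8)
This is precisely Theorem~1.2 of \cite{PS-trichotomy}, so in the paper the correct move is simply to cite it; but were one to reconstruct the argument in this group-interval setting --- where the ``trivial'' alternative of the full Peterzil--Starchenko trichotomy has already been discarded, since a non-degenerate definable addition near $0$ has a graph that is not a finite union of ``rectangles'' $A_1\times A_2\times A_3$ --- the plan would run as follows. The whole proof pivots on one canonical gadget: the ordered division ring $\Lambda$ of \emph{infinitesimal slopes} of $\mathcal I$. The dichotomy between (1) and (2) is then the dichotomy between ``the structure induced on $(I;<,+,0)$ is nothing beyond the $\Lambda$-linear structure'' and ``it is strictly richer, and the surplus assembles into a field''.

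First I would build $\Lambda$: take germs at $0$ of $\mathcal I$-definable partial functions $\lambda\colon I\to I$ defined on a neighbourhood of $0$, fixing $0$, and additive there, $\lambda(x+y)=\lambda(x)+\lambda(y)$ whenever both sides are defined. Pointwise addition and composition of germs make $(\Lambda;+,\o,0,\id)$ a ring, the ring axioms falling out of associativity and commutativity of $+$. By o-minimality a definable additive partial function is, on each cell of a decomposition of its domain, either identically $0$ near $0$ or strictly monotone; an additive germ vanishing on a neighbourhood of $0$ is the zero germ, so every non-zero $\lambda\in\Lambda$ is strictly monotone near $0$, hence locally bijective with additive (and so $\Lambda$-) local inverse germ. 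Declaring $\lambda>0$ when $\lambda(x)>0$ for small $x>0$ then exhibits $\Lambda$ as an ordered division ring.

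Next comes the dichotomy itself. Call a definable $X\sub I^n$ \emph{$\Lambda$-affine near a point} if in a neighbourhood of that point it is cut out by finitely many conditions $\sum_i\lambda_i(x_i-a_i)=0$ and $\sum_i\lambda_i(x_i-a_i)<0$ with $\lambda_i\in\Lambda$ and $a_i\in I$. If every $\mathcal I$-definable set is piecewise $\Lambda$-affine, then the structure induced on $(I;<,+,0)$ is interdefinable with that of a group-interval inside a sufficiently large ordered $\Lambda$-vector space $\mathcal V$ under the obvious coordinatisation, with definable sets going to definable sets: this is case~(1). Otherwise some $\mathcal I$-definable set fails to be $\Lambda$-affine, and, after cell decomposition and differentiation of a suitable definable family of functions through a common point, I would extract a definable \emph{normal} family of curves (in the sense of the Definition above) whose slope map is not locally $\Lambda$-linear. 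Translating such families by the group operation and ``composing'' them, and feeding this through the Peterzil--Starchenko theory of one-dimensional definable families of curves, one obtains on some subinterval $K\sub I$ a definable binary operation $\otimes$ which, together with the restrictions of $+$ and $<$, satisfies the ordered-field axioms on a neighbourhood of the relevant points; an o-minimal group-/field-chunk argument then promotes this local field to a genuine $\mathcal I$-definable ordered field with domain a subinterval of $I$, and every ordered field definable in an o-minimal structure is real closed: this is case~(2). The two cases are mutually exclusive, because in case~(1) every definable function is piecewise $\Lambda$-affine, which leaves no room for field multiplication --- a definable bijection of an interval that is non-affine in each variable.

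The step I expect to be the genuine obstacle is the production of the field in the non-linear case: showing that non-$\Lambda$-affineness really yields a normal family of curves not accounted for by $\Lambda$, that composition of such families is associative and distributes over the translation family, and that the resulting operation on $K$ can be straightened into field multiplication before the chunk machinery is invoked. This is the substantive content of \cite{PS-trichotomy}; by comparison the construction of $\Lambda$, the $\Lambda$-affine classification on the linear side, and the passage from a local to a global field are soft o-minimal bookkeeping.
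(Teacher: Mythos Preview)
Your proposal is correct: the paper does not prove this statement but simply cites it as Theorem~1.2 of \cite{PS-trichotomy}, exactly as you say in your first sentence. The extended sketch you provide of how the Peterzil--Starchenko argument runs is additional material not present in the paper (and not needed for its purposes), but it is a reasonable outline of the trichotomy proof and does not conflict with anything the paper does.
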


We now return to our interval $J\sub C_S(g)\sub G^{00}$ and to the definable bijection $\phi:J^n\to U$.
We  let $\star:J^n\times J^n\dashrightarrow J^n $ be the partial function obtained as the pullback via $\phi$
of the group operation in $G$. Namely, for $a,b,c\in J^n$, $$a\star b=c\Leftrightarrow \phi(a)\cdot \phi(b)=\phi(c).$$
Since $\phi$ and the group operation on $U$ are definable in both $\Goo$ and $\Ru$, then so is $\star$.
Because $J$ is an open interval around the identity inside the one-dimensional group $C_S(g)\sub G$,
the restriction to $J$ of the group operation of $G$ makes $J$ into a group-interval, and we let $+$ denote this restriction to $J$. Note that the ordering on $J$ is definable using $+$ and therefore definable in both $\Goo$ and in $\mathcal R$.





\begin{lemma} \label{l:nonlinear}
   The structure $\mathcal J=(J;<,+,\star)$ is an o-minimal expansion of a 
   group-interval, not of type (1) in the sense of Fact \ref{trich}. It is definable in both $\Goo$ and in  $\mathcal R$.
\end{lemma}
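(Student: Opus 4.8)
The plan is to verify each clause in turn. That $\mathcal J=(J;<,+,\star)$ is definable in both $\Goo$ and $\mathcal R$ is essentially already in hand: $J$ and the order on $J$ are $\Goo$- and $\Ru$-definable by Lemma~\ref{l:groupInterval}, $+$ is the restriction of the $G$-operation and so is definable in both (note $J\subseteq C_S(g)\subseteq \Soo\subseteq\Goo$, and the inclusion $\Goo\hookrightarrow G(\Ru)$ is an interpretation, so $+$ is $\Goo$-definable, while it is patently $\Ru$-definable), and $\star$ is the $\phi$-pullback of the $G$-operation, hence definable in both by the remarks preceding the lemma since $\phi$ is $\Goo$- and $\Ru$-definable by Lemma~\ref{l:chart}. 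O-minimality of $\mathcal J$ is inherited: $(J;<,+,\star)$ is a reduct of the structure induced on $J$ from the o-minimal field $\mathcal R$ (all of $<,+,\star$ being $\Ru$-definable), and the structure induced on a definable set in an o-minimal structure is o-minimal. Finally $(J;<,+,0)$ is a group-interval because $J$ is an open interval about $e$ in the one-dimensional abelian algebraic group $C_S(g)$, on which the order topology agrees with the group topology, so $+$ is continuous, associative, commutative, order-preserving in each coordinate near $0$, with neutral element $0=e$ and local inverses.

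The substantive point is that $\mathcal J$ is \emph{not} of type (1) in Fact~\ref{trich}. Suppose for contradiction it were. Then, after passing to a sufficiently saturated elementary extension so Fact~\ref{trich} applies, there is an ordered vector space $\mathcal V$ over an ordered division ring and a definable isomorphism of $(J;<,+,0)$ with a group-interval in $V$ carrying every $\mathcal J$-definable set to a $\mathcal V$-definable set. In an ordered vector space, every definable function of one variable is piecewise linear (affine), and more generally definable sets are, locally, finite Boolean combinations of half-spaces; crucially, $\mathcal V$ defines no infinite field, and any definable group operation near $0$ in several $J$-variables is, up to the identification with $V$, given locally by linear maps. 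The aim is to exploit this flatness to contradict the non-abelianity of $G$.

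Here is the mechanism I expect to use, and the main obstacle. Via $\phi:J^n\xrightarrow{\sim}U$, the $G$-operation transports to the partial operation $\star$ on $J^n$, which is $\mathcal J$-definable; under a type-(1) isomorphism it would become a locally linear partial group operation on (an open subset of) $V^n$. A locally linear local group structure on a neighbourhood of the identity in a vector space is necessarily the group operation of a vector group — i.e.\ locally $(x,y)\mapsto x+y$ in suitable coordinates — in particular \emph{abelian}. But $U$ is an open neighbourhood of $e$ in $G$, so the local group structure on $U$ determines the Lie algebra $\g=T_eG$ with its bracket (the bracket being computed from the second-order Taylor expansion of the commutator map), and abelianity of the local group forces $[\g,\g]=0$, contradicting simplicity (hence non-abelianity) of $\g$. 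Concretely: compute the commutator map $(x,y)\mapsto x^{-1}y^{-1}xy$ near $e$ in the $\phi$-coordinates; its leading (bilinear) term is the Lie bracket on $T_eJ^n\cong\g$; if $\star$ is locally linear this term vanishes, forcing $\g$ abelian. The delicate step — and the one I'd spend the most care on — is making rigorous the passage "type-(1) structure on $\mathcal J$ $\Rightarrow$ locally linear $\star$ on $V^n$ $\Rightarrow$ trivial bracket": one must check that the relevant Taylor coefficients (equivalently, the germ of $\star$ at the diagonal point $(e,\dots,e)$ together with enough derivatives to see the bracket) are computed by $\mathcal J$-definable data that the type-(1) isomorphism is obliged to linearise, so that the identification of $T_eJ$ with a line in $V$ and the linearity of definable maps in $\mathcal V$ genuinely kill the bilinear term. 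Granting that, one-and-only-one in Fact~\ref{trich} forces type (2), which is precisely the assertion used in the next section.
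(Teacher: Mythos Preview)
Your handling of definability in $\Goo$ and $\Ru$, o-minimality, and the group-interval structure is fine and matches the paper's. The gap is in your argument that $\mathcal J$ is not of type~(1).

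You want to derive a contradiction from non-abelianity of $G$ by arguing that type~(1) forces $\star$ to be ``locally linear'' and hence locally abelian, so that the Lie bracket vanishes. But type~(1) only yields that $\mathcal J$-definable maps are \emph{piecewise} $D$-affine in the $\mathcal V$-coordinates, not linear in any neighbourhood of the identity, and you do not bridge this. Worse, your proposed mechanism conflates two coordinate systems: the Lie bracket is the bilinear term of the commutator in the $\Ru$-differentiable chart $\phi$, whereas the type-(1) isomorphism $(J;<,+)\to\mathcal V$ is only $\mathcal J$-definable and need not be even $C^1$ from the $\Ru$-viewpoint. So piecewise linearity in $\mathcal V$ says nothing directly about the $\Ru$-Taylor coefficients. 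The salvageable version of your idea would be to show that a piecewise-linear local group on $V^n$ has identically vanishing commutator near $0$ (this would pull back via $\phi$ to make $G$ locally, hence globally, abelian); that statement is true but is a non-trivial fact about groups in linear o-minimal structures which you neither prove nor cite, and your ``delicate step'' paragraph does not supply it. A piecewise-linear map $c:V^n\times V^n\to V^n$ vanishing on both axes need not vanish identically (e.g.\ $\max(0,\min(x,y))$ in one variable each), so one genuinely needs the group axioms, and extracting the conclusion from them is where the work lies.

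The paper takes a different, more geometric route that avoids this issue entirely. Rather than analysing $\star$, it uses simplicity of $G$ to produce in $\mathcal J$ a one-dimensional \emph{normal} family of definably connected curves through the single point $(e,\ldots,e)$: namely the $\phi$-pullbacks $\phi^{-1}(J_0^h)$ of conjugates of a subinterval $J_0\subseteq J$, where $h=\phi(c)$ ranges over a one-dimensional set $C\subseteq J^n$ of representatives of distinct right cosets of $N_G(C_S(g))$. It then invokes the easy consequence of quantifier elimination in ordered vector spaces that no infinite normal definable family of one-dimensional definably connected sets through a common point can exist: near $0$ the $D$-linear equations cutting out $C_t$ must be independent of $t$ (since $0\in C_t$ for all $t$), so all $C_t$ coincide on a segment, contradicting normality. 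This is quicker and sidesteps any analysis of piecewise-linear group laws.
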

\begin{proof}
  The structure $\mathcal J$ is o-minimal since it is definable in the 
  o-minimal structure $\Ru$ and the ordered interval $(J;<)$ is definably 
  isomorphic via a projection map with an ordered interval in $(\Ru;<)$.
  We want to show that $\mathcal J$ is not of type (1).

  To simplify notation we denote below the group $C_S(g)$ by $H$. Because $G$ is a simple group, there are infinitely many  distinct conjugates of the one-dimensional group $H$. More precisely, $\dim N_G(H)<\dim G$ and if $h_1,h_2\in G$ are not in the same right-coset of $N_G(H)$ then $H^{h_1}\cap H^{h_2}$ and hence also $J^{h_1}\cap I^{h_2}$ is finite.  Since $\dim (N_G(H))<\dim G$ one can find infinitely many $h\in G$, arbitrarily close to $e$, which belong to different right-cosets of $N_G(H)$. In fact, by Definable Choice in o-minimal expansions of groups or group-intervals, we can find in $\mathcal J$ a definably connected one-dimensional set $C\sub J^n$ with $(e,\ldots,e)\in C$,  such that no two elements of $\phi(C)$ belong to the same right-coset of $N_G(H)$.

  The  family $\{H^h\cap U:h\in \phi(C)\}$ is a one-dimensional normal family of definably connected curves in $U$, all containing $e$, and we want to ``pull it back'' to the structure $\mathcal J$. In order to do that we first note that by replacing $J$ by a subinterval $J_0\sub J$, and replacing $C$ by a possibly smaller definably connected set, we may assume that for every $h\in \phi(C)$ and every $g\in J_0$, each of $h, h^{-1}g$ and $ h^{-1}gh$  is inside $U$. Thus, the one-dimensional normal family of definably connected curves
$$\{\phi^{-1}(J_0^h):h\in \phi(C)\}$$
is definable in $\mathcal J$, and all of these curves contain the point $(e,\ldots,e)$
(=$\phi^{-1}(e))$.

 Now, if $\mathcal J$ was of type (1) in Fact \ref{trich}, then up to a change of signature it would be a reduct of an ordered vector space. However, it easily follows from quantifier elimination in ordered vector spaces that in such structures there is no definable infinite normal family of one-dimensional definably connected sets, all going through the same point.
  Hence, $\mathcal J$ is not of type (1).
\end{proof}

By Fact ~\ref{trich},
there is a $\mathcal J$-definable real closed field $K$ on an open interval
in $J$ containing $e$. Thus $K$ with its field structure is also 
$\Goo$-definable and $\Ru$-definable.


\subsection{Obtaining the valuation, and bi-interpetability}
\label{s:Goo}
\newcommand{\GooK}{G_K^{00}}
Let $K \subseteq  J$ be the definable real closed field obtained in the previous 
subsection, considered as a pure field.
By \cite{OPP-groupsRings},
there is an $\Ru$-definable field isomorphism $\theta_R : \Ru \rightarrow  K$.
Let $G_K := G^{\theta_R}(K)$ be the group of $K$-points of the $K$-definable 
(linear algebraic) group obtained by applying $\theta_R$ to the parameters 
defining $G$, so
$\theta_R$ induces an $\Ru$-definable group isomorphism $\theta_G : G \rightarrow  
G_K$.
Let $\GooK := (G^{\theta_R})^{00}(K)$ be the corresponding infinitesimal 
subgroup,
thus $\theta_R$ restricts to an isomorphism $\theta_G\negmedspace\restriction_\Goo : \Goo \rightarrow  \GooK$.

Denote by $(\Ru;\Goo)$
the expansion of the field $\Ru$ by a predicate for $\Goo \leq  G$.

\begin{lemma} \label{l:rcvfBiterp}
  $(\Ru;\Goo)$ and $(\Ru;\O)$ have the same definable sets.
\end{lemma}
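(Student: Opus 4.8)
The plan is to show that $\O$ is definable in $(\Ru;\Goo)$ and that $\Goo$ is definable in $(\Ru;\O)$; the latter direction is already recorded in equation~\eqnref{Goo}, so the work is in the former. The idea is to use the isomorphism $\theta_G\negmedspace\restriction_\Goo : \Goo \to \GooK$, which is $\Ru$-definable, to transport $\Goo$ into the copy $K \subseteq \Ru$ of the field. Since $\GooK$ sits inside $G_K$, which is a linear algebraic group over the internally-defined field $K$, the group $\GooK$ consists of those $K$-points of $G_K$ lying in $I + \Mat_d(\m_K)$, where $\m_K$ is the maximal ideal of the valuation ring $\O_K$ of $K$ determined by the standard part map relative to $\R \subseteq K$. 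The point is that $K$, the field isomorphism $\theta_R$, and hence $G_K$ and the predicate $\GooK$ are all $(\Ru;\Goo)$-definable (the first two by the previous subsections, the predicate because $\GooK = \theta_G(\Goo)$ and $\theta_G$ is $\Ru$-definable). So from $(\Ru;\Goo)$ we recover the valued field $(K; +_K, \cdot_K, \O_K)$, where $\O_K$ is \emph{defined} as the set of $x \in K$ such that some matrix in $\GooK$ has $x$ as an entry of $I$ minus itself --- more carefully, we recover $\m_K$ as the set of entries of $A - I$ for $A \in \GooK$ (at least, the subgroup they generate, but in the $\SO_3$-type argument one sees directly that all sufficiently small elements of $K$ arise this way), and then $\O_K = \{x : x\m_K \subseteq \m_K\}$ is interpretable.

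Concretely, I would argue as follows. First recall from \secref{trich} and \secref{Goo} that $K \subseteq J \subseteq \Goo$ is $(\Ru;\Goo)$-definable as a pure field (it is $\Goo$-definable, hence $(\Ru;\Goo)$-definable), and that the field isomorphism $\theta_R : \Ru \to K$ is $\Ru$-definable, hence $(\Ru;\Goo)$-definable. Consequently $G_K = G^{\theta_R}(K)$ and the subgroup $\GooK = \theta_G(\Goo)$ are $(\Ru;\Goo)$-definable (the latter because the predicate for $\Goo$ is available and $\theta_G$ is $\Ru$-definable). Now, working inside the field $K$: since $\Ru \succ \R$, also $K \succ \theta_R(\R)$, and $\theta_R$ carries the valuation ring $\O$ to the convex valuation ring $\O_K := \theta_R(\O)$ of $K$ with maximal ideal $\m_K := \theta_R(\m)$; and $\GooK = G_K \cap (I + \Mat_d(\m_K))$ by applying $\theta_R$ to \eqnref{Goo}. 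The task is therefore to recover $\m_K$, equivalently $\O_K$, from the set $\GooK$ inside the field $K$.

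For this, note that the set $E := \{\, (A-I)_{ij} : A \in \GooK,\ 1 \le i,j \le d \,\}$ of matrix entries of elements of $\GooK$, minus the identity, is $(\Ru;\Goo)$-definable, and $E \subseteq \m_K$. It suffices to show $E$ generates $\m_K$ as an ideal of $\O_K$ --- or even just that $E$ contains a cofinal (in the valuation sense) family, e.g.\ an element of each sufficiently large value --- since then $\O_K = \{ x \in K : xE \subseteq E' \}$ where $E'$ is the additive-and-multiplicative closure of $E$ inside $\m_K$, and in any case the convex hull (in $K$, with its $\theta_R$-transported order) of the $\O_K$-submodule generated by $E$ is exactly $\m_K$, so $\O_K$ is recovered as $\{x \in K : x \cdot \m_K \subseteq \m_K\}$, all of which is first-order in $(\Ru;\Goo)$. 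That $E$ is ``large enough'' follows from the $\SO_3$ picture: pulling back through $\theta_R$, the group $\GooK$ contains (a copy of) $\SO_3^{00}$, whose elements are infinitesimal rotations $\rho(\beta, L)$ by all infinitesimal angles $\beta$ and all axes $L$; computing a matrix entry of $\rho(\beta,L) - I$ for a fixed axis, say the one making the entry equal $\cos\beta - 1$ or $\sin\beta$, shows that $E$ meets every neighbourhood $[-\tfrac1n,\tfrac1n]$ of $0$ and, more precisely, is cofinal among the infinitesimals, which is exactly what is needed. Hence $\m_K$, then $\O_K$, then $\O = \theta_R^{-1}(\O_K)$ are $(\Ru;\Goo)$-definable, giving that $(\Ru;\O)$ is a reduct of $(\Ru;\Goo)$; the reverse reduction is \eqnref{Goo}; so the two structures have the same definable sets.

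The main obstacle I anticipate is the bookkeeping in the ``$E$ is large enough'' step: one must be careful that the copy of $\SO_3^{00}$ inside $\Goo$ (provided by Lemma~\ref{l:spin3}), after transport by $\theta_G$ into $G_K \le \GL_d(K)$, really does produce matrix entries that range cofinally through $\m_K$ and not merely through some proper convex $\O_K$-submodule --- i.e.\ that the embedding $S \hookrightarrow G$ does not ``compress'' the infinitesimal scale, which follows because $S \hookrightarrow G$ is an $\R$-definable immersion and hence induces on $\so_3^{00}(K) \hookrightarrow \g(K)$ a $K$-linear injection defined over $\O_K$ with an $\O_K$-defined left inverse up to a nonzero scalar, so it preserves the valuation up to a bounded shift. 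Once that is in hand, the rest is routine first-order manipulation. An alternative, possibly cleaner, route avoids matrix entries entirely: transport the adjoint embedding $\Ad : \Goo \to \Aut_\Ru(\g(\Ru))$ through $\theta_R$ and read the valuation off the entries of $\Ad_A - \id$ for $A \in \GooK$ --- this is in fact the approach foreshadowed by the phrase ``we use an adjoint embedding to see the valuation'' in the overview of \secref{simple}, and it has the advantage that the adjoint representation is canonically defined over $\R$ (hence over $\theta_R(\R) \subseteq \O_K$), sidestepping any worry about the chosen matrix realisation of $G$.
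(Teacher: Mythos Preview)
Your approach works, but it is a substantial detour: the paper's proof avoids $K$ entirely. Since $G$ is algebraic over $\R$, it admits an $\R$-definable chart $\psi : I^n \to U$ at the identity with $\psi(0)=e$; because $\psi$ is $\R$-definable it commutes with standard part, so $\Goo = \psi(\m^n)$. Then $\m^n = \psi^{-1}(\Goo)$, hence $\m$, hence $\O = \Ru \setminus \tfrac{1}{\m}$, is $(\Ru;\Goo)$-definable in three lines. Your argument is essentially this same chart idea transported through $\theta_R$ to $K$, with the ambient matrix coordinates of $G_K \le \GL_d(K)$ playing the role of the chart; but the matrix coordinates of $G \le \GL_d(\Ru)$ are already $\R$-definable in $\Ru$, so the passage through $K$ buys nothing.

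Two smaller points on your write-up. First, your explicit formulas for $\O_K$ and $\m_K$ are circular as stated (the ``additive-and-multiplicative closure of $E$ inside $\m_K$'' presupposes $\m_K$, and the ``$\O_K$-submodule generated by $E$'' presupposes $\O_K$); the clean non-circular definition your cofinality claim actually supports is $\m_K = \{x \in K : \exists e \in E,\; |x| \le |e|\}$, using only the field-definable order on $K$. Second, the cofinality of $E$ in $\m_K$ does not need $\SO_3^{00}$ or any analysis of whether $S \hookrightarrow G$ compresses scale: any $\R$-definable curve $\gamma$ through $e$ in $G$ with $\gamma'(0)\neq 0$ (which exists since $\dim G > 0$) already gives, for each $t \in \m$, an element $\gamma(t) \in \Goo$ whose matrix entries minus the identity are $\R$-definable functions of $t$ with nonzero linear part, hence cofinal in $\m$.
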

\begin{proof}
  Since $G$ is defined over $\R$,
  it admits a chart at the identity defined over $\R$,
  that is, an $\R$-definable homeomorphism $\psi : I^n \rightarrow  U$,
  where $I \subseteq  \Ru$ is an open interval around $0$,
  and $U \subseteq  G$ is an open neighbourhood of $e$,
  and $\psi((0,\ldots ,0))=e$.

  Then $\st_G(\psi(x)) = \psi(\st(x))$,
  and so $\Goo = \psi(\m^n)$.
  Thus $\Goo$ is definable in $(\Ru;\O)$, and conversely $\m^n$, and hence 
  $\m$, and so also $\O = \Ru \setminus \frac1\m$, are definable in $(\Ru;\Goo)$.
\end{proof}

\begin{lemma} \label{l:gookdef}
  $\GooK \leq  G_K$ is $\Goo$-definable,
  and moreover $\theta_G\negmedspace\restriction_\Goo : \Goo \rightarrow  \GooK$ is $\Goo$-definable.
\end{lemma}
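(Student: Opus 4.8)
The plan is to establish that $\GooK$ is $\Goo$-definable by transporting the defining data for $\Goo$ through the $\Ru$-definable isomorphisms we have constructed, and then observing that all the relevant structure is already available inside $(\Goo;*)$ because it was shown to be so in the previous subsections. First I would recall the chain of maps: we have a $\Ru$-definable field isomorphism $\theta_R : \Ru \to K$, where $K \subseteq J \subseteq \Goo$ is a $\Goo$-definable and $\Ru$-definable real closed field. It induces the $\Ru$-definable group isomorphism $\theta_G : G \to G_K$ and its restriction $\theta_G\negmedspace\restriction_\Goo : \Goo \to \GooK$. The point is that $\theta_R$, being a field isomorphism with domain $\Ru$ and codomain $K$ which is a definable interval inside $\Goo$, is a definable map \emph{in the structure $(\Ru;\Goo)$}: indeed by Lemma~\ref{l:rcvfBiterp} this structure has the same definable sets as $(\Ru;\O)$, in which $\Goo$ is a definable predicate, and $\theta_R$ is $\Ru$-definable, hence certainly $(\Ru;\O)$-definable.

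The key step is then to show that $\theta_R$ is in fact $\Goo$-definable, i.e.\ definable in the \emph{pure group} $(\Goo;*)$. To see this, note that Theorem~\ref{t:defbleFieldsRCVF}, which we may assume, tells us that every definable field in $\RCVF$ is definably isomorphic to the home sort; but more to the point, the constructions of \secref{SO300}, \secref{interval} and \secref{trich} were carried out with the explicit requirement that every object produced ($\Soo$, the interval $J$, the order on $J$, the chart $\phi$, the operation $\star$, and hence by Fact~\ref{trich} the real closed field $K$ with its field structure) is \emph{$\Goo$-definable}. So $K$ as a field is $\Goo$-definable. The isomorphism $\theta_R : \Ru \to K$ is $\Ru$-definable; but we do not need $\theta_R$ itself to be $\Goo$-definable --- we only need $\GooK$ and the bijection $\theta_G\negmedspace\restriction_\Goo$ to be $\Goo$-definable. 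Now $\GooK = (G^{\theta_R})^{00}(K)$ is, by \eqnref{Goo} applied inside the field $K$, defined purely from the field structure of $K$ and the (transported) parameters defining $G$; since $K$ is $\Goo$-definable and $G^{\theta_R}$ is defined over $K$ by the image under $\theta_R$ of the $\R$-parameters defining $G$ (finitely many field elements, which land in $K$ and so correspond to specific $\Goo$-definable elements), we get that $G_K$, and inside it $\GooK$, is $\Goo$-definable.

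It remains to see that the bijection $\theta_G\negmedspace\restriction_\Goo : \Goo \to \GooK$ is itself $\Goo$-definable. Here I would argue as follows. Using the $\R$-definable chart $\psi : I^n \to U$ from the proof of Lemma~\ref{l:rcvfBiterp}, we have $\Goo = \psi(\m^n)$, and the corresponding chart $\psi^{\theta_R}$ for $G_K$ satisfies $\GooK = \psi^{\theta_R}((\theta_R\m)^n)$. Under these charts, $\theta_G\negmedspace\restriction_\Goo$ is transported to the map $\m^n \to (\theta_R\m)^n$ given coordinatewise by $\theta_R\negmedspace\restriction_\m$. So it suffices to show that the restriction of $\theta_R$ to the maximal ideal $\m$ is $\Goo$-definable. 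But $\m$, as an $(\Ru;\O)$-definable --- equivalently $(\Ru;\Goo)$-definable --- set, together with the field operations restricted to it, carries enough structure: the graph of $\theta_R\negmedspace\restriction_\m$ is an $\Ru$-definable subset of $\m \times K$, and both $\m$ (via the chart, as $\psi^{-1}(\Goo)$ up to the coordinate projection) and $K$ (as a $\Goo$-definable interval) live inside $\Goo$-definable sets; moreover the graph of $\theta_R\negmedspace\restriction_\m$ is $(\Ru;\Goo)$-definable, hence $(\Ru;\O)$-definable, and one checks it is invariant under the relevant structure so that it descends to a $\Goo$-definable relation. The main obstacle is precisely this last point --- verifying that the graph of $\theta_R\negmedspace\restriction_\m$ is not merely $(\Ru;\Goo)$-definable but genuinely $\Goo$-definable --- which is the heart of the bi-interpretation and will presumably be handled by exhibiting $\theta_R\negmedspace\restriction_\m$ concretely as a composition of the $\Goo$-definable maps built in \secref{interval} and \secref{trich} (the chart $\phi$, the field $K$ inside $J$, and the field isomorphism from \cite{OPP-groupsRings} applied \emph{within} the $\Goo$-definable field $K$).
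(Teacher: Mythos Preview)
Your proposal has a genuine gap at precisely the point you flag as ``the main obstacle''. You reduce to showing that $\theta_R\negmedspace\restriction_\m$ is $\Goo$-definable, but this does not even type-check: $\m \subseteq \Ru$ is not a subset of any power of $\Goo$, and the chart $\psi$ you invoke to remedy this is only $\R$-definable, not $\Goo$-definable. More fundamentally, nothing you have built gives a $\Goo$-definable map \emph{out of} $\Goo$ into $K$-land; the chart $\phi$ goes the wrong way and is only defined on a neighbourhood of $e$. Your separate argument that $\GooK$ is $\Goo$-definable is also incomplete: applying \eqnref{Goo} inside $K$ requires the maximal ideal $\m_K = \theta_R(\m)$, i.e.\ the valuation on $K$, and you have not shown this to be $\Goo$-definable --- indeed, obtaining the valuation is exactly what this lemma is for.

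The paper's proof supplies the missing idea: the adjoint representation. Differentiating the conjugation action at $e$ via the $\Goo$-definable chart $\phi$ (after possibly translating so the group operation is $C^1$ in these coordinates) yields a map $\Ad\negmedspace\restriction_{\Goo} : \Goo \to \GL_n(K)$ which is $\Goo$-definable, since it is built from conjugation (group-definable) and differentiation in the $K$-field structure (also $\Goo$-definable). It is injective because $G$ has finite centre and $\Goo$ is torsion-free. Now set $\eta := \Ad \circ \theta_G^{-1} : G_K \to \GL_n(K)$. This is $\Ru$-definable, but its graph lives entirely inside powers of $K$; since $\theta_R : \Ru \to K$ is an $\Ru$-definable field isomorphism, the $\Ru$-definable subsets of $K^m$ are exactly the $K$-field-definable ones, so $\eta$ is $K$-definable and hence $\Goo$-definable. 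Then $\theta_G\negmedspace\restriction_{\Goo} = \eta^{-1} \circ \Ad\negmedspace\restriction_{\Goo}$ is $\Goo$-definable, and $\GooK$ is its image.
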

\begin{proof}

  Let $n := \dim(G)$.

  Precisely as in \cite[3.2.2]{PPS-defblySimple},
  translating by an element of $\Goo$ if necessary we may assume that the 
  group operation is $C_1$ for $K$ on a neighbourhood of the identity 
  according to the chart $\phi\negmedspace\restriction_{K^n}$ (where $\phi$ is the map from 
  Lemma~\ref{l:chart}),
  and then the adjoint representation yields an $\Ru$-definable homomorphism
  $\Ad : G \rightarrow  \GL_n(K)$.
  Since $\Goo$ defines $\phi$ and the field $K$ and the conjugation maps $x 
  \mapsto  x^g$ for $g,x \in \Goo$, the restriction
  $\Ad\negmedspace\restriction_{\Goo} : \Goo \ensuremath{\lhook\joinrel\relbar\joinrel\rightarrow} \GL_n(K)$
  is $\Goo$-definable, and is an embedding since $G$ has finite centre, and $\Goo$ is torsion-free.

  Define $\eta := \Ad \o \theta_G^{-1} : G_K \ensuremath{\lhook\joinrel\relbar\joinrel\rightarrow} \GL_n(K)$.
  So $\eta$ is $\Ru$-definable.
  Since $K$ is $\Ru$-definably isomorphic to $\Ru$, the $\Ru$-definable 
  structure on $K$ is just the field structure.
  Thus $\eta$ is also $K$-definable, and hence $\Goo$-definable.

  So $\theta_G\negmedspace\restriction_{\Goo} = \eta^{-1} \o \Ad\negmedspace\restriction_{\Goo}$ is $\Goo$-definable.

\end{proof}

\begin{proof}[Proof of Theorem~\ref{t:mainBiterp}]
  By Lemma~\ref{l:gookdef}, $\theta_R$ provides a definition of $(\Ru;\Goo)$ in 
  $\Goo$ with universe $K$.
  This forms a bi-interpretation together with the tautological interpretation 
  of $\Goo$ in $(\Ru;\Goo)$;
  indeed, the composed interpretations are $\theta_R$ and $\theta_G\negmedspace\restriction_{\Goo}$,
  which are $\Ru$-definable and $\Goo$-definable respectively.

  Combining this with Lemma~\ref{l:rcvfBiterp} concludes the proof of 
  Theorem~\ref{t:mainBiterp}.
\end{proof}

\section{Isomorphisms of infinitesimal subgroups}
\label{s:borel-tits}

Cartan \cite{cartan} and van der Waerden \cite{waerden} showed that any 
abstract group isomorphism between compact semisimple Lie groups is 
continuous. In a similar spirit, Theorem~\ref{t:borelTits} below shows that every 
abstract group isomorphism of two infinitesimal subgroups of simple compact Lie 
groups is, up to field isomorphisms, given by an algebraic map.

We preface the proof with two self-contained preliminary subsections. In 
outline, the proof is as follows. We first prove in Theorem~\ref{t:defbleFieldsRCVF} 
that given $R \vDash  \RCVF$, any model of $\RCVF$ definable $R$ is definably 
isomorphic to $R$. As in other cases of the ``model-theoretic Borel-Tits 
phenomenon'', first described for $\ACF$ in \cite{poizat-BT}, it follows that 
every abstract group isomorphism of the infinitesimal subgroups is the 
composition of a valued field isomorphism with an $\RCVF$-definable group 
isomorphism; we give a general form of this argument in 
Lemma~\ref{l:abstractNonsenseBT}. Finally, in Section~\ref{s:borelTits} we deduce the 
final statement by seeing that any $\RCVF$-definable group isomorphism of the 
infinitesimal subgroups is induced by an algebraic isomorphism of the Lie 
groups.

\subsection{Definable fields in $\RCVF$}
Here, we show that there are no unexpected definable fields in $R^n$ for $R \vDash  
\RCVF$.

In this subsection we reserve the term `semialgebraic' for $R$-semialgebraic 
sets.
Let $\Ru_v=\langle R;\O\rangle \vDash  \RCVF$.
We say a point $a$ of an $\Ru_v$-definable set $X$ over $A$ is {\em generic} 
over $A$ if $\trd(a/A)$ is maximal for points in $X(R')$ for $R'$ an 
elementary extension of $R$. Such an $a$ exists if $R$ is 
$(\aleph_0+|A|^+)$-saturated. This maximal transcendence degree is the 
dimension $\dim(X)$ of $X$, which coincides 
(\cite[Theorem~4.12]{MMS-weakOMin}) with the largest $d$ such that the image 
of $X$ under a projection to $d$ co-ordinates has non-empty interior.

\begin{fact} \label{f:semialgebraic}\begin{enumerate}\item Let $X\subseteq 
    R^n$ be an $\Ru_v$-definable set over $A$, and $a\in X$ a generic element 
    over $A$.
Then there exists a semialgebraic neighborhood $U\subseteq R^n$ of $a$ (possibly defined over additional parameters, which may be taken to be independent of $Aa$) such that $U\cap X$ is semialgebraic.

\item Let $U\subseteq R^d$ be an open $\Ru_v$-definable set over $A$, and $a\in U$ generic over $A$. Let $F:U\to R$ be an $\Ru_v$-definable function. Then there exists a semialgebraic neighborhood $U$ of $a$, such that $F| U$ is semialgebraic and $C^1$ with respect to $R$, meaning that all partial derivatives of $F$ with respect to $R$ exist and are continuous on $U$.

\end{enumerate}
\end{fact}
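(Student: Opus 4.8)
The plan is to reduce both parts, by genericity, to a ``full--dimensional'' situation, and there to combine quantifier elimination for $\RCVF$ with the observation that on a sufficiently small box the valuation becomes locally constant. For part (1), recall first that by quantifier elimination for $\RCVF$ \cite{cherlinDickmann}, $X$ is a Boolean combination over $A$ of semialgebraic sets and ``valuational atoms'' of the form $\{x : v(p(x))\ge v(q(x))\}$ with $p,q$ polynomials over $A$. Consider the case $\dim X=n$: then $\trd(a/A)=n$, so no nonzero polynomial over $A$ vanishes at $a$; in particular $p(a),q(a)\neq 0$ for each of the finitely many $p,q$ occurring. Pick $\epsilon>0$ transcendental over $Aa$ (possible by saturation) with $v(\epsilon)$ large enough that $v(p(x)-p(a))>v(p(a))$ throughout the box $U:=\prod_k(a_k-\epsilon,a_k+\epsilon)$ for each occurring $p$; this is possible because $p(x)-p(a)$ vanishes at $x=a$, hence has absolute value at most $\epsilon$ times a constant depending only on $Aa$ on $U$ (Taylor expansion over $R$). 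For such $U$ each occurring $v(p(x))$ equals the constant $v(p(a))$ on $U$, so every valuational atom holds either everywhere or nowhere on $U$; replacing each valuational atom in the defining formula of $X$ by its constant truth value turns that formula into a semialgebraic formula over $A$, so $U\cap X$ equals $S\cap U$ for some semialgebraic $S$ over $A$, and in particular is semialgebraic. (Here $\epsilon$ is the promised auxiliary parameter, independent of $Aa$.)

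For the general case $\dim X=d<n$ I would reduce to the above. Since $\trd(a/A)=d$, after permuting coordinates $(a_1,\dots,a_d)$ is algebraically independent over $A$ while $a_{d+1}=f_{d+1}(a_1,\dots,a_d),\dots,a_n=f_n(a_1,\dots,a_d)$ for semialgebraic functions $f_j$ over $A$ continuous near $(a_1,\dots,a_d)$; let $Y$ be the semialgebraic graph of $f=(f_{d+1},\dots,f_n)$ over $A$, so $a\in Y$. The crucial point is that $X$ is contained in $Y$ near $a$: otherwise $a$ would lie in the closure of the $\Ru_v$-definable set $X\setminus Y$ without lying in it, hence in its frontier, which has dimension $<\dim(X\setminus Y)\le d$ by the dimension theory of $\RCVF$ \cite{MMS-weakOMin} --- contradicting $\trd(a/A)=d$. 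Moreover $Y$ coincides near $a$ with the $A$-Zariski closure of $a$. Granting this, the map $\pi$ that deletes the last $n-d$ coordinates is injective on $X$ near $a$, so it suffices to apply the full--dimensional case to the $\Ru_v$-definable set $Z:=\{\bar x\in R^d : (\bar x,f(\bar x))\in X\}$ over $A$ (of dimension $d$, with $(a_1,\dots,a_d)$ generic), push the resulting semialgebraic-over-$A$ set forward through the semialgebraic map $\bar x\mapsto(\bar x,f(\bar x))$, and intersect with a box around $a$ constraining the last $n-d$ coordinates to a small neighbourhood of $(a_{d+1},\dots,a_n)$; this exhibits $U\cap X$ as semialgebraic, again of the form $S\cap U$ with $S$ semialgebraic over $A$.

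For part (2), apply part (1) to the graph $\Gamma_F\subseteq R^{d+1}$, which is $\Ru_v$-definable over $A$ of dimension $d$ with $(a,F(a))$ a generic point. Part (1) produces a semialgebraic neighbourhood $W=B\times I$ of $(a,F(a))$ and a semialgebraic set $T$ over $A$ with $\Gamma_F\cap W=T\cap W$. Let $C$ be the cell of a $C^1$ cell decomposition of $T$ over $A$ containing $(a,F(a))$. Since $C\cap W\subseteq T\cap W=\Gamma_F\cap W$ has dimension $\le\dim\Gamma_F=d$ while $\dim C\ge\trd((a,F(a))/A)=d$, the cell $C$ is a $d$-dimensional $C^1$-submanifold of $R^{d+1}$, and $C\cap W$ is a nonempty open subset of $C$ contained in the graph of $F$, so $\pi$ (projection to the first $d$ coordinates) is injective on it and $d\pi|_C$ is invertible at $(a,F(a))$. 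By the implicit function theorem over $R$, $C$ is near $(a,F(a))$ the graph of a semialgebraic $C^1$ function $h$ over $A$; and $h(\bar x)=F(\bar x)$ for $\bar x$ in a small box around $a$, since for such $\bar x$ the point $(\bar x,h(\bar x))$ lies in $C\cap W=\Gamma_F\cap W$. Hence $F$ agrees with $h$, so is semialgebraic and $C^1$ with respect to $R$, on a semialgebraic neighbourhood of $a$.

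The step I expect to be the main obstacle is not a single computation but the infrastructure it rests on: making the ``sufficiently small box'' argument precise (choosing $\epsilon$ of large enough valuation so that the finitely many relevant polynomial values keep their valuations) and, more importantly, quoting the dimension theory of $\RCVF$ --- the frontier inequality and invariance of dimension under definable bijections --- in exactly the form needed, since these replace familiar o-minimal facts. It is also essential to keep track throughout of the fact that the semialgebraic approximant $S$ can be taken over the original parameter set $A$, only the neighbourhood itself involving $a$ and the auxiliary $\epsilon$, so that in part (2) the genericity of $a$ over $A$ can legitimately be used to place $(a,F(a))$ in a $d$-dimensional cell.
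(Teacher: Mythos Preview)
Your argument is correct, but the route differs from the paper's. For part~(1) the paper bypasses quantifier elimination entirely: it invokes the weak o-minimal cell decomposition of \cite[Theorem~4.11]{MMS-weakOMin} to write $X$ as a finite union of $A$-definable cells, each a graph of a definable function, and then uses Mellor's theorem that $\dcl$ in $\RCVF$ coincides with $\dcl$ in $\RCF$ to conclude that the cell through $a$ is already the graph of a \emph{semialgebraic} function near $a$; a short frontier-type argument then shows no other cell accumulates at $a$. Your approach instead handles the full-dimensional case by the pleasant direct observation that valuational atoms $v(p(x))\ge v(q(x))$ become locally constant near a generic point (so QE collapses to a semialgebraic formula), and then reduces the general case via the frontier inequality. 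Both routes ultimately lean on \cite{MMS-weakOMin}: the paper for cell decomposition, you for the frontier inequality; your full-dimensional step is more self-contained, while the paper's is shorter. One point you use without comment is that the functions $f_{d+1},\ldots,f_n$ are \emph{semialgebraic} over $A$: this is exactly Mellor's $\dcl^{\RCVF}=\dcl^{\RCF}$, so you are implicitly invoking the same ingredient. For part~(2) the paper is terser than you --- it just says the graph is locally semialgebraic by~(1) and then that a semialgebraic function is $C^1$ at a generic point --- whereas you unwind this via a $C^1$ cell decomposition of the semialgebraic approximant $T$; your care in arranging $T$ to be over $A$ (so that genericity of $(a,F(a))$ forces it into a $d$-cell) is exactly what makes this work, and your observation that injectivity of $\pi$ on $C\cap W$ forces $C$ to be a graph-cell rather than a band-cell is the substance behind ``$d\pi|_C$ invertible''.
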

\proof
\begin{enumerate}
  \item
    Permuting co-ordinates, we may assume $a=(b,c)$ where $b \in R^d$ is 
    generic over $A$ and $c \in R^{n-d}$ is in $\dcl(bA)$.
    Let $\pi : R^n \rightarrow  R^d$ be the projection to the first $d$ co-ordinates.

    By \cite[Theorem~4.11]{MMS-weakOMin}, $X$ admits a decomposition 
    into finitely many disjoint $A$-definable cells each of which is the graph of a 
    definable function on an open subset of some $R^t$.

    Definable closure in $\Ru_v$ coincides with definable closure in $R$ 
    \cite[Theorem~8.1(1)]{Mellor-RCVFEI}.
    Thus if $C$ is the cell containing $a$, then $C$ is the graph of a 
    semialgebraic function on a neighbourhood of $b$. We now claim that 
    locally near $a$, the set $X$ is equal to $C$.

    Suppose for a contradiction that $C' \neq C$ is another cell in the
    decomposition, and $a \in \cl(C')$, the topological closure of $C'$.
    The cell decomposition of $X$ induces a cell decomposition of $\pi(X)\sub R^d$, and since $b$ is generic in $R^d$
    over $A$, it must belong to the interior of $\pi(C)$. It follows that $\pi(C')=\pi(C)$, and so there exists $c'\neq c$ such that $(b,c')\in C'$.

       By the inductive definition of a cell
    and the genericity of $a$ in $X$, also $C'$ is the graph of a
    semialgebraic function on a neighbourhood of $b$, thus in particular is 
    locally closed, contradicting $a \in \cl(C')$.

  \item By (1), the graph of $F$ is a semialgebraic set in a neighborhood of 
    $(a,f(a))$, and since $a$ is generic in its domain, the function $F$ is 
    $C^1$ in a neighborhood of $a$.
\end{enumerate}
\qed

\begin{theorem}\label{t:defbleFieldsRCVF} Let $\Ru_v=\langle R;\O\rangle \vDash  
  \RCVF$.
  \begin{enumerate}[(a)]\item
    If $F \subseteq  R^n$ is a definable field in $\Ru_v$ then it is $\Ru_v$-definably 
    isomorphic to either $R$ or its algebraic closure $R(\sqrt{-1})$.
  \item
    If $F \subseteq  R^n$ is a definable non-trivially valued field in $\Ru_v$ then it 
    is $\Ru_v$-definably isomorphic to either $\Ru_v$ or its algebraic closure 
    $\Ru_v(\sqrt{-1})$.
  \end{enumerate}
\end{theorem}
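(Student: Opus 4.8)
The plan is to reduce everything to part~(a), and to prove~(a) by transferring a definable field to the residue field and the value group via the standard part map. First I would observe that part~(b) follows easily from part~(a): given a definable non-trivially valued field $(F;\oplus,\otimes,\mathcal{V})$ in $\Ru_v$, part~(a) supplies an $\Ru_v$-definable field isomorphism $F \cong R$ or $F \cong R(\sqrt{-1})$; under this isomorphism the valuation ring $\mathcal V$ becomes an $\Ru_v$-definable valuation ring on $R$ or on $R(\sqrt{-1})$, and one checks (using o-minimality of the $\Ru_v$-induced structure on $R$, i.e.\ that $\Ru_v$ is weakly o-minimal and that the structure it induces on $R$ is just that of the real closed field, by Fact~\ref{f:semialgebraic} together with quantifier elimination) that the only nontrivial $\Ru_v$-definable valuation ring on $R$ is $\O$ itself and the only one on $R(\sqrt{-1})$ is its integral closure $\O(\sqrt{-1})$. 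Hence the isomorphism is an isomorphism of valued fields, giving~(b).

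For part~(a), the approach is to use that a definable field $F \subseteq R^n$ in $\Ru_v$ is, by Fact~\ref{f:semialgebraic}(1) and~(2), semialgebraic and $C^1$ (with respect to $R$) on a semialgebraic neighbourhood of a generic point, and then push it down through $\st$. Concretely: fix a generic $a \in F$ over the defining parameters $A$; by Fact~\ref{f:semialgebraic} the graphs of $\oplus$ and $\otimes$ agree, near $(a,a)$, with semialgebraic $C^1$ maps. Now the multiplicative structure of a field is rigid: a standard argument (as in the o-minimal recognition of fields, e.g.\ the Peterzil--Starchenko trichotomy applied semialgebraically, or directly via \cite{OPP-groupsRings}) shows that the semialgebraic local field structure obtained by restricting $F$ to a semialgebraic neighbourhood of $a$ is semialgebraically isomorphic to a semialgebraic local field structure over $R$. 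One then invokes \cite{OPP-groupsRings}: any field definable in a real closed field $R$ is definably isomorphic to $R$ or to $R(\sqrt{-1})$; applying this to the semialgebraic field extracted from $F$ gives a semialgebraic, hence $\Ru_v$-definable, isomorphism onto $R$ or $R(\sqrt{-1})$, and this extends to all of $F$ because $F$ is generated as a field by any neighbourhood of a generic point.

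The delicate point — and the one I expect to be the main obstacle — is the passage from ``$F$ has a semialgebraic $C^1$ field structure in a neighbourhood of a generic point'' to ``$F$ globally carries a semialgebraic field structure''. The issue is that the $\Ru_v$-definable bijections identifying $F$ with its local semialgebraic picture a priori use the valuation, so one must argue that the resulting field is genuinely semialgebraic and that the local isomorphism extends globally. The clean way to handle this: work in a sufficiently saturated model so that generic points exist; show that $F$, being a one-dimensional-per-coordinate definable field, has $\dim F = m$ for some $m$, and that $F$ with its ring operations is, near a generic point, semialgebraically isomorphic to $R^m$ with a semialgebraic ring structure; then use that field multiplication determines the field up to isomorphism (a field is generated by any Zariski-dense, in particular any open, subset), so the germ of the isomorphism at a generic point propagates to a total $\Ru_v$-definable field isomorphism. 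Once the field is recognised as semialgebraic, \cite{OPP-groupsRings} finishes the job and pins down the two cases $R$ and $R(\sqrt{-1})$.
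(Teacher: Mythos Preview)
Your outline has the right endgame (reduce to a semialgebraic field and quote \cite{OPP-groupsRings}), but the two central steps are not actually carried out, and one contains an error.

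\textbf{Part (a): the globalisation.} You correctly flag the passage from ``locally semialgebraic near a generic'' to ``globally isomorphic to a semialgebraic field'' as the delicate point, but your proposed resolution (``the germ propagates because a field is generated by an open set'') is not a proof: it does not produce a concrete $\Ru_v$-definable map out of $F$. The paper's mechanism is quite different and is the real content of the argument. First, following Marikova, one equips the additive group $(F,+)$ with a definable $C^1$ atlas (translates of a single semialgebraic chart at a generic), and deduces that every definable additive endomorphism of $F$ is globally $C^1$. Then for each $c\in F$ one takes the Jacobian at $0$ of left multiplication $\lambda_c$; the chain rule makes $c\mapsto J_0(\lambda_c)$ an $\Ru_v$-definable ring homomorphism $F\to M_d(R)$, injective since $F$ is a field. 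This is the missing globally defined map. Only then does one argue that the image $F_1\subseteq M_d(R)$ is semialgebraic (a semialgebraic open neighbourhood $W$ of $0$ exists by Fact~\ref{f:semialgebraic}, and $F_1=\{a^{-1}b:a,b\in W\}$ since the matrix operations are the ambient ones), after which \cite{OPP-groupsRings} applies. Your sketch omits both the atlas construction and the Jacobian embedding, and without them there is no candidate isomorphism to extend.

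\textbf{Part (b): uniqueness of the valuation.} Your justification is incorrect: the assertion that ``the structure $\Ru_v$ induces on $R$ is just that of the real closed field'' is false, since $\O$ itself is $\Ru_v$-definable but not semialgebraic; Fact~\ref{f:semialgebraic} gives semialgebraicity only locally near a generic point. The paper argues instead that any $\Ru_v$-definable valuation ring $\O'$ is convex (by weak o-minimality it is a finite union of convex sets, and a convex subring with $1$ is convex), hence comparable with $\O$; one inclusion is ruled out directly, and for the other one shows $v((\O')^*)$ is a definable subgroup of the value group, which is a pure divisible ordered abelian group and so has none nontrivial. This forces $\O'=\O$. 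Your reduction of (b) to (a) is correct, but the uniqueness-of-valuation step needs this argument rather than the one you gave.
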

\proof
\newcommand{\la}{\langle}
\newcommand{\ra}{\rangle}
Passing to an elementary extension as necessary, we assume $R$ is 
$(\aleph_0+|A|^+)$-saturated for any parameter set $A$ we consider.

We first prove (a). Let $d=\dim F$.
 We first show that the additive group of $F$ can be endowed with the structure of a definable $C^1$ atlas (not necessarily finite). By that we mean: a definable family of subsets of $F$, $\{U_t:t\in T\}$, and a definable family of bijections $f_t:U_t\to V_t$, where each $V_t$ is an open subset of $R^{d}$, such that for every $s, t\in T$, the set $f_t(U_t\cap U_s)$ is open in $V_t$ and the transition maps $\sigma_{t,s}: f_t(U_t\cap U_s)\to f_s(U_t\cap U_s)$
are $C^1$ with respect to $R$. Moreover, the group operation and additive inverse are $C^1$-maps when read through the charts.

To see this we follow the strategy of the paper of Marikova, \cite{Marikova}. Without loss of generality $F$ is definable over $\emptyset$. We fix $g\in F$ generic and an open neighborhood $U\ni g$ as in Fact \ref{f:semialgebraic} (1).  By the cell decomposition in real closed fields, we may assume that $U\cap F$ is a cell, so definably homeomorphic to some open subset $V$ of $R^{d}$. By replacing $U\cap F$ with $V$ we may assume that $U$ is an open subset of $R^d$, and $g$ is generic in $F$ over the parameters defining $U$.

\begin{claim} \label{fact2.10} The map $(x,y,z)\mapsto x-y+z$ is a $C^1$-map (as a map from $U^3$ into $U$) in some neighborhood of $(g,g,g)$.\end{claim}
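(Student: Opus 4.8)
The plan is to prove Claim~\ref{fact2.10} by reducing it to the $C^1$-differentiability of the three basic operations — addition, subtraction-as-inversion, and the evaluation of the formal combination $x-y+z$ — near a generic point, using Fact~\ref{f:semialgebraic}(2) repeatedly. The key point is that $(g,g,g)$ is as generic as we could hope for: since $g$ is generic in $F=U$ over the parameters $A$ defining $U$, and $F$ is a group, the triple $(g,g,g)$ has the property that each partial operation we apply lands at a point which is again generic over the relevant parameter set, so Fact~\ref{f:semialgebraic}(2) applies to give local semialgebraicity and $C^1$-ness.

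First I would observe that the map $m:(x,y)\mapsto x-y$, viewed as a partial $\Ru_v$-definable function $U\times U \dashrightarrow F$, is defined and takes the value $0$ at $(g,g)$; but $0$ need not be generic, so instead I argue at a nearby generic basepoint. Concretely, pick $h\in F$ generic over $Ag$; then $g-h$ and $h$ are each generic over the appropriate parameters (transcendence degree is preserved under the group operation applied to independent generics), so by Fact~\ref{f:semialgebraic}(2) the maps $(x,y)\mapsto x-y$ and $(u,z)\mapsto u+z$ are semialgebraic and $C^1$ near $(g,h)$ and near $(g-h,g)$ respectively — after possibly enlarging the parameter set, which is harmless. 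Composing, $(x,y,z)\mapsto (x-y)+z$ agrees near $(g,h,g)$ with a composite of semialgebraic $C^1$ maps, hence is itself semialgebraic and $C^1$ there. Finally I would transport this back to $(g,g,g)$: since $x-y+z$ depends only on $x-y$ and $z$, and the translation $w\mapsto w + (g-h) - (g-h)$ is a trick I don't actually need — rather, the function $(x,y,z)\mapsto x-y+z$ restricted to the slice $\{(x,y,z): y = g\}$ versus $\{y=h\}$ differ by an honest constant shift in the $y$-argument, and more simply, the $C^1$ locus of an $\Ru_v$-definable function is an $\Ru_v$-definable set, so having shown it contains the generic point $(g,h,g)$ we conclude by genericity of $g$ (choosing $h$ with $g-h$ generic and then letting $h\to$ a value making the basepoint $(g,g,g)$) that it contains a neighbourhood of $(g,g,g)$ as well.

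The cleanest route, which I would actually write up, avoids the last awkward step: let $W$ be the $\Ru_v$-definable set of points of $U^3$ at which $(x,y,z)\mapsto x-y+z$ is locally semialgebraic and $C^1$ with respect to $R$. By the argument above $W$ contains $(g,h,g)$ whenever $h$ is chosen with $g$ generic over the defining parameters of the situation and $g-h$ generic; running this for a suitable such $h$ and using that the translate $y\mapsto y+(g-h)$ is an $R$-semialgebraic $C^1$ diffeomorphism carrying the basepoint to $(g,g,g)$, we get $(g,g,g)\in W$. Since $W$ is $\Ru_v$-definable and $g$ is generic over $\emptyset$ (recall $F$ was taken $\emptyset$-definable and the parameters defining $U$ were swallowed into the base), $W$ contains a neighbourhood of $(g,g,g)$, which is the claim.

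I expect the main obstacle to be purely bookkeeping: verifying that at each composition step the intermediate point is generic over the accumulated parameters, so that Fact~\ref{f:semialgebraic}(2) genuinely applies, and keeping track of which parameters the semialgebraic neighbourhoods are allowed to depend on (the parenthetical in Fact~\ref{f:semialgebraic}(1) — "independent of $Aa$" — is what makes this work, and I would cite it explicitly). There is no real analytic content; the substance is that differentiation commutes with the three operations at generic points because each is locally semialgebraic there, and semialgebraic functions are generically $C^1$ by the real closed field structure. The one genuine subtlety worth flagging is that $0 = g-g$ is typically \emph{not} generic, which is exactly why one cannot apply Fact~\ref{f:semialgebraic}(2) naively at $(g,g)$ and must route through an auxiliary generic $h$.
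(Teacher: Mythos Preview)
Your strategy is right and is essentially the Pillay--Marikov\'a argument the paper cites: route through an auxiliary independent generic $h$ to avoid the non-generic intermediate value $g-g=0$, then transport back to $(g,g,g)$. The execution of the transport step, however, has a genuine gap. You argue that since $\Phi:(x,y,z)\mapsto x-y+z$ is $C^1$ at $(g,h,g)$ and the translation $y\mapsto y+(g-h)$ is a $C^1$ diffeomorphism carrying $(g,h,g)$ to $(g,g,g)$, therefore $(g,g,g)\in W$. This does not follow: a $C^1$ change of variables $T$ shows $\Phi\circ T^{-1}$ is $C^1$ at $T(p)$ whenever $\Phi$ is $C^1$ at $p$, but $\Phi\circ T^{-1}\neq\Phi$ here, so $W$ need not be invariant under $T$. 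What you actually need is the identity $\Phi(x,y,z)=\Phi(x,\,y-c,\,z)-c$ for $c=g-h$, which expresses $\Phi$ near $(g,g,g)$ as the composite of the input translation $y\mapsto y-c$ (near $y=g$), then $\Phi$ (near $(g,h,g)$), then the \emph{output} translation $w\mapsto w-c$ (near $w=\Phi(g,h,g)=2g-h$). You only treat the input translation. One must also check, via Fact~\ref{f:semialgebraic}(2), that the output translation is $C^1$ at $2g-h$; this holds because $2g-h=g+c$ is generic over $A\cup\{c\}$ (since $\trd(g/Ac)=d$). Your ``awkward'' first approach actually gestured at exactly this (``differ by an honest constant shift'') and is the correct route; the ``cleanest route'' as written does not close the argument.

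A minor further point: your closing appeal to genericity of $g$ to deduce that $W$ contains a \emph{neighbourhood} of $(g,g,g)$ is misplaced, since $(g,g,g)$ is not generic in $U^3$ (it has transcendence degree $d$, not $3d$). This is harmless, though: $W$ is open by construction, so $(g,g,g)\in W$ already yields the neighbourhood.
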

\proof The proof is identical to \cite[Lemma 2.10]{Marikova}, with Fact \ref{f:semialgebraic} (2) above replacing Lemma 2.8 there.\qed

Thus, there exists $U_0\ni g$, such that the map $(x,y,z)\mapsto x-y+z$ is a $C^1$ map from $U_0^3$ into $U$. 
We now consider the definable cover of $F$:
$$\U=\{h+U_0:h\in F\},$$ (with $+$ the $F$-addition) and the associated family of chart maps $f_h:h+U_0\to U_0$, $f_h(x)=x-h$.
Using Claim \ref{fact2.10}, it is not hard to see that $\U$ endows $\la F,+\ra$ with a definable $C^1$-atlas;
indeed, if $h+U_0 \cap h'+U_0 \neq \emptyset$, say $h+u_0 = h'+u_0'$,
then $h-h' = u_0' - u_0$,
so $\sigma_{h,h'}(u_0'') = h + u_0'' - h' = u_0'' - u_0 + u_0'$.
Similarly, the function $+$ is a $C^1$-map from $F^2$ into $F$ (where $F^2$ is endowed with the product atlas), and $x\mapsto -x$ is a $C^1$-map as well. Indeed, in \cite{Marikova} Marikova proves in exactly the same way that the same $\U$ endows the group with a topological group structure (using \cite[Lemma~2.10]{Marikova} in place of Claim \ref{fact2.10}).

By Fact \ref{f:semialgebraic} (2), every definable function from $F$ to $F$ is $C^1$ in a neighborhood of generic point of $F$. Thus, just as in \cite[Lemma 2.13]{Marikova}, we have:
\begin{fact}\label{endo} If $\alpha :F\to F$ is a definable endomorphism of $\la F,+\ra$ then $\alpha$ is a $C^1$-map.\end{fact}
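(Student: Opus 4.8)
The plan is to propagate differentiability from a single generic point to all of $F$, using the translation-invariance of the $C^1$-atlas $\mathcal{U}=\{h+U_0:h\in F\}$ already constructed on $\langle F,+\rangle$ and the hypothesis that $\alpha$ is an endomorphism. For the first step I would fix a point $g\in F$ which is generic over all of the relevant parameters, namely those defining $\alpha$ together with those defining $F$, $U_0$ and the atlas $\mathcal{U}$; such a $g$ exists by saturation of $R$. Reading $\alpha$ through the charts $f_h$ near $g$ and $\alpha(g)$ presents it as a definable map between open subsets of $R^d$, and applying Fact~\ref{f:semialgebraic}(2) to each of its $d$ coordinate functions (this is the observation made just before the statement) shows that $\alpha$ is $C^1$ on a neighbourhood of $g$.

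Next I would record that $\mathcal{U}$ is translation-invariant by construction: for $c\in F$ the translation $\tau_c\colon x\mapsto x+c$ carries the chart domain $h+U_0$ onto $(h+c)+U_0$, and when read through the charts $f_h$ and $f_{h+c}$ it is simply the identity map of $U_0$; in particular each $\tau_c$ is a $C^1$-diffeomorphism of $F$. Now let $h\in F$ be arbitrary and set $c:=h-g$. Since $\alpha$ is an endomorphism of $\langle F,+\rangle$ we have $\alpha=\tau_{\alpha(c)}\circ\alpha\circ\tau_{-c}$, and reading this identity near $h$ through the charts, $\tau_{-c}$ is the identity from the chart at $h$ to the chart at $g$, the middle factor $\alpha$ is $C^1$ near $g$ by the first step, and $\tau_{\alpha(c)}$ is the identity from the chart at $\alpha(g)$ to the chart at $\alpha(g)+\alpha(c)=\alpha(h)$. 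Hence $\alpha$ is $C^1$ on a neighbourhood of $h$; as $h$ was arbitrary, $\alpha$ is a $C^1$-map, exactly along the lines of \cite[Lemma~2.13]{Marikova}.

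I do not expect a genuine obstacle here: the argument is routine once the translation-invariant $C^1$-atlas is available. The only points requiring care are that $g$ must be taken generic over \emph{all} of the relevant parameters — in particular those defining $\alpha$, which is where saturation of $R$ is used — and the (immediate) verification that translations act as identity maps between the charts of $\mathcal{U}$ and are therefore trivially $C^1$.
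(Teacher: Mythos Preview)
Your proposal is correct and matches the paper's approach: the paper simply observes that by Fact~\ref{f:semialgebraic}(2) any definable map $F\to F$ is $C^1$ near a generic point, and then defers to \cite[Lemma~2.13]{Marikova} for the propagation to all of $F$; you have spelled out exactly that propagation argument via the translation identity $\alpha=\tau_{\alpha(c)}\circ\alpha\circ\tau_{-c}$ and the translation-invariance of the atlas $\mathcal{U}$.
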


For every $c\in F$, we consider the map $\lambda_c:F\to F$, defined by $\lambda_c(x)=c x$ (multiplication in $F$). 
By fact \ref{endo} each $\lambda_c$ is a $C^1$-map and we consider its Jacobian matrix at $0$, with respect to $R$, denoted by $J_0(\lambda_c)$. This is a matrix in $M_d(R)$, and the map $c\mapsto J_0(\lambda_c)$ is $\Ru_v$-definable.

As was discussed in \cite[Lemma 4.3]{OPP-groupsRings}, it follows from the chain rule that the map $c\mapsto J_0(\lambda_c)$ 
is a ring homomorphism into $M_d(R)$ (note that we do not use here the uniqueness of solutions of ODE as in \cite{OPP-groupsRings}, thus we a-priori only obtain a ring homomorphism). However, since $F$ is a field the map is injective. 

To summarize, we mapped $F$ isomorphically and definably onto an $\Ru_v$-definable field, call it $F_1$, of matrices inside $M_d(R)$. Notice that now the field operations are just the usual matrix operations, $1_{F_1}$ is the identity matrix, so in particular, all non-zero elements of $F_1$ are invertible matrices in $M_d(R)$. Our next goal is to show that $F_1$ is semialgebraic. 

By Fact \ref{f:semialgebraic} (2), there exists some non-empty relatively open subset of $F_1$ which is semialgebraic.
By translating it to $0$ (using now the semialgebraic $F_1$-addition), we find such a neighborhood, call it $W\subseteq F_1$, of the $0$-matrix. But now, given any $a\in F$, by multiplying $a$ by an invertible matrix $b\in W$ sufficiently close to $0$, we obtain $ba\in W$. Thus, $F_1=\{a^{-1}b: a,b\in W\}$. Because $W$ is semialgebraic so is $F_1$.

Thus, we showed that $F$ is definably isomorphic in $\Ru_v$ to a semialgebraic field $F_1$. We now apply Theorem \cite[Theorem 1.1]{OPP-groupsRings} and conclude that $F_1$ is semialgebraically isomorphic to $R$ or to $R(\sqrt{-1})$.

  Finally, we address (b).
  This follows immediately from (a) once we observe that $\O$ is the only 
  definable valuation ring in $\Ru_v$. So suppose $\O'$ is another. By weak 
  o-minimality, $\O'$ is a finite union of convex sets, and then since it is a 
  subring with unity it is easy to see that $\O'$ is convex. Thus either $\O 
  \subseteq  \O'$ or $\O' \subseteq  \O$. Without loss of generality, $\O$ is the standard 
  valuation ring $\cup_{n \in \N} [-n,n]$, so $\O$ properly contains no 
  non-trivial convex valuation ring. Thus $\O \subseteq  \O'$. But then if $v : R \rightarrow  
  \Gamma$ is the valuation induced by $\O$, then the image of the units of 
  $\O'$ is a definable subgroup $v((\O')^*) \leq  \Gamma$. But $\Gamma$ is a pure 
  divisible ordered abelian group, and so has no non-trivial definable 
  subgroup. Hence $v((\O')^*) = \{0\}$, and hence $\O' = \O$.
\qed

\begin{remark}
  The techniques we applied here will not readily adapt to handle imaginaries.
  In the case of algebraically closed valued fields, a result of 
  \cite{HR-metastable} is that the only {\em interpretable} fields, up to 
  definable isomorphism, are the valued field and its residue field. It would 
  be natural to expect that, correspondingly, the only interpretable fields in 
  $\RCVF$ up to definable isomorphism are the valued field, its residue field, 
  and their algebraic closures.
\end{remark}

\subsection{Interpretations and general nonsense}
We address the ``Borel-Tits phenomenon'' associated with bi-interpretations 
which require parameters. We spell out an abstract form of the argument given 
by Poizat \cite{poizat-BT} in the case of algebraically closed fields. The 
ideas in this subsection are well-known. For convenience of exposition, we 
first give a name to the following key property.

\begin{definition} \label{d:self-recollecting}
  Say a theory $T$ is \defn{self-recollecting} if any $\B' \models T$ 
  interpreted in any $\B \models T$ is $\B$-definably isomorphic to $\B$.

  Say $T$ is \defn{self-recollecting for definitions} if this holds for
  interpretations which are definitions (where recall a {\em definition} is an 
  interpretation which doesn't involve non-trivial quotients).
\end{definition}

\begin{examples}
  $\ACF$ is self-recollecting by \cite{poizat-BT}, $\RCF$ is self-recollecting 
  by \cite{NP-compactLie}, and $\Th(\Q_p)$ is self-recollecting for 
  definitions by \cite{pillay-fieldsQp}. It follows directly from the 
  characterisation of interpretable fields in \cite{HR-metastable} that the 
  theory of non-trivially valued algebraically closed fields $\ACVF$ is 
  self-recollecting.

  Theorem~\ref{t:defbleFieldsRCVF}(b) proves that $\RCVF$ is self-recollecting for 
  definitions, but we do not settle the question of whether it is 
  self-recollecting.
\end{examples}

We use the notation $\alpha : \terp{\A}{\B}$ to denote an interpretation of 
$\A$ in $\B$, which recall we consider to be a map from $\A$ to some definable 
quotient in $\B$. Note that any isomorphism is in particular an interpretation 
(and even a definition). We denote composition of interpretations by 
concatenation.

\begin{lemma} \label{l:abstractNonsenseBT}
  Suppose $\A_i$ is a structure interpreted in a structure $\B_i$ for $i=1,2$,
  and the interpretation of $\A_1$ in $\B_1$ can be completed to a 
  bi-interpretation.
  Suppose further that $\Th(\B_1) = \Th(\B_2)$, and $T_B := \Th(\B_1)$ is 
  self-recollecting.

  Suppose $\sigma : \A_1 \rightarrow  \A_2$ is an isomorphism of structures.

  Then there exist an isomorphism $\sigma' : \B_1 \xrightarrow{\cong} \B_2$
  and a $\B_2$-definable isomorphism $\theta : \sigma'(\A_1) \rightarrow  \A_2$
  such that $\sigma = \theta(\sigma'\negmedspace\restriction_{\A_1})$.

  If $T_B$ is only self-recollecting for definitions but the given 
  interpretations are definitions, then the same result holds.
\end{lemma}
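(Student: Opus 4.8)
The plan is to chase the interpretations and splice in the self-recollection hypothesis at the one point where it is needed, namely to rectify the $\B_i$ occurring inside the bi-interpretation datum.

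Let me set up notation. Write $\alpha_1 : \terp{\A_1}{\B_1}$ for the given interpretation, and let $\beta_1 : \terp{\B_1}{\A_1}$ be an interpretation completing it to a bi-interpretation; so $\beta_1\alpha_1$ is a $\B_1$-definable isomorphism $\B_1 \to \B_1$ and $\alpha_1\beta_1$ is an $\A_1$-definable isomorphism $\A_1 \to \A_1$. Let $\alpha_2 : \terp{\A_2}{\B_2}$ be the given interpretation of $\A_2$ in $\B_2$. The naive idea is: since $\sigma : \A_1 \to \A_2$ is an isomorphism, the composite $\alpha_2\,\sigma\,\beta_1 : \terp{\B_1}{\B_1}$ is an interpretation (as an interpretation, since $\sigma$ is an isomorphism hence in particular a definition). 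But this is an interpretation of $\B_1$ in $\B_1$, whereas we want one of $\B_2$ in $\B_1$; the point is that $\Th(\B_1)=\Th(\B_2)$, so $\B_2$ is a model of $T_B=\Th(\B_1)$ and $\alpha_2\,\sigma\,\beta_1$ should really be read as an interpretation of $\B_2$ in $\B_1$ once we notice that the codomain structure carried by $\alpha_2\,\sigma\,\beta_1$ on the image of $\beta_1$ is exactly the $\B_1$-structure, but the ``target model'' we started from was $\B_2$. More carefully: $\alpha_2\beta_2 : \terp{\B_2}{\A_2}$ is not available, but what we do have is that $\beta_1$ is an interpretation of $\B_1$ in $\A_1$ and, transporting by $\sigma$, we get an interpretation $\sigma\beta_1 : \terp{\B_1}{\A_2}$ — here $\sigma\beta_1$ means the map $b \mapsto \sigma(\beta_1(b))$, and it is an interpretation because $\beta_1$ is and $\sigma$ is an isomorphism. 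Composing with $\alpha_2$ gives $\gamma := \alpha_2\,\sigma\,\beta_1 : \terp{\B_1}{\B_2}$, an interpretation of $\B_1$ in $\B_2$.

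Now invoke that $T_B$ is self-recollecting: since $\B_1 \models T_B$ is interpreted in $\B_2 \models T_B$ via $\gamma$, there is a $\B_2$-definable isomorphism $\delta : \gamma(\B_1) \to \B_2$. Then $\sigma' := \delta \circ \gamma : \B_1 \to \B_2$ is an isomorphism of structures (composite of the interpretation $\gamma$ with the $\B_2$-definable isomorphism $\delta$, and one checks it is honestly an isomorphism of the abstract structures, not merely an interpretation, because $\gamma$ is injective — it factors through the injective maps $\beta_1$, $\sigma$, $\alpha_2$ — and the $\B_1$-structure it carries on its image matches via $\delta$). It remains to produce $\theta$. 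Set $\theta := \sigma \circ (\alpha_1\beta_1)^{-1} \circ \beta_1^{-1} \circ \ldots$ — rather, let me compute directly what $\sigma$ composed with $\sigma'$ must be. We want $\theta : \sigma'(\A_1) \to \A_2$ with $\theta(\sigma'\negmedspace\restriction_{\A_1}) = \sigma$, i.e. $\theta = \sigma \circ (\sigma'\negmedspace\restriction_{\A_1})^{-1}$ on $\sigma'(\A_1)$; the content is that this map is $\B_2$-definable. Unwinding, $\sigma'\negmedspace\restriction_{\A_1} = \delta \circ \alpha_2 \circ \sigma \circ (\beta_1\alpha_1)$ applied to elements of $\A_1$ viewed inside $\B_1$; since $\beta_1\alpha_1$ is $\B_1$-definable and equals, up to the bi-interpretation, essentially the identity correction, the inverse $(\sigma'\negmedspace\restriction_{\A_1})^{-1}$ followed by $\sigma$ collapses to $\delta \circ \alpha_2$ composed with a $\B_1$-definable (hence, transported, $\B_2$-definable) map. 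So $\theta$ is $\B_2$-definable because $\delta$ is $\B_2$-definable, $\alpha_2$ realises the interpretation of $\A_2$ in $\B_2$, and the remaining correction term is the $\B_1$-definable map $\beta_1\alpha_1$ conjugated across the isomorphism $\sigma'$ into $\B_2$.

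\textbf{The main obstacle} I expect is bookkeeping rather than mathematics: keeping straight, at each composition, whether a given composite is merely an interpretation (so one may only conclude things ``up to a definable quotient'') or genuinely an isomorphism of abstract structures, and making sure the self-recollection hypothesis is applied to an honest interpretation between honest models of $T_B$. The one genuinely substantive point is that $\gamma = \alpha_2\sigma\beta_1$ really is an interpretation of $\B_1$ \emph{in} $\B_2$ — this uses that $\B_2 \models \Th(\B_1)$ so that the formulas defining $\beta_1\alpha_1$, $\alpha_1\beta_1$ and the pullbacks of $\B_1$-definable sets all transfer — and then that $\delta\gamma$ is injective, which relies on the bi-interpretation giving $\beta_1$ injective (a bijection onto a definable quotient) and $\sigma,\alpha_2$ injective. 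Finally, for the last sentence of the lemma: if $T_B$ is only self-recollecting for definitions but $\alpha_1,\alpha_2$ (and hence, under the bi-interpretation, $\beta_1$) are definitions, then $\gamma$ is a composite of definitions, hence a definition, so the weaker hypothesis still applies to it and the argument goes through verbatim.
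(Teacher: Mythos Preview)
Your approach is essentially the same as the paper's: form the composite interpretation $\gamma = \alpha_2\sigma\beta_1$ of $\B_1$ in $\B_2$, apply self-recollection to obtain the $\B_2$-definable $\delta$, set $\sigma' = \delta\gamma$, and then argue that $\theta = \sigma\circ(\sigma'\negmedspace\restriction_{\A_1})^{-1}$ is $\B_2$-definable. The paper packages the last step more cleanly via a ``homotopy'' relation $\sim$ on interpretations (writing $\alpha_2\sigma \sim \alpha_2\sigma(\beta_1\alpha_1) \sim \sigma'\alpha_1$), but the content is identical.

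One small wobble in your write-up: the correction term $\beta_1\alpha_1$ is $\A_1$-definable (this is the bi-interpretation condition), and the relevant transport to $\B_2$ is via the interpretation $\alpha_2\sigma$ of $\A_1$ in $\B_2$, not via $\sigma'$ as you write. Concretely, setting $\hat\mu := (\alpha_2\sigma)(\beta_1\alpha_1)(\alpha_2\sigma)^{-1}$, one gets $\sigma'\alpha_1 = \delta\hat\mu(\alpha_2\sigma)$ and hence $\theta = (\delta\hat\mu)^{-1}$, which is visibly $\B_2$-definable. Your version (conjugating by $\sigma'$) also yields a $\B_2$-definable map, so the conclusion survives, but the formula you sketch does not quite match $\theta$.
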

\begin{proof}
  Let $(f,g)$ be the bi-interpretation of $\A_1$ with $\B_1$,
  and $\alpha : \terp{\A_2}{\B_2}$ the interpretation.

  \[ \xymatrix{
  \A_1 \biterpAr{d}{f} \ar[r]^\sigma & \A_2 \terpAr[d]^{\alpha} \\
  \B_1 \biterpAr{u}{g} & \B_2 \\
  } \]

  For interpretations $\beta,\gamma : \terp{\A}{\B}$, we write $\beta \sim 
  \gamma$ if
  $\gamma \beta^{-1} : \beta(\A) \rightarrow  \gamma(\A)$ is a $\B$-definable 
  isomorphism between the two copies of $\A$ in $\B$.\footnote{Interpretations 
  satisfying this condition are sometimes called {\em homotopic}.}

  Now $\alpha \sigma g$ is an interpretation of $\B_1$ in $\B_2$,
  and thus by self-recollecting
  there is a $\B_2$-definable isomorphism $\tau : (\alpha\sigma g)(\B_1) \rightarrow  
  \B_2$. Let $\sigma' := \tau\alpha\sigma g : \B_1 \rightarrow  \B_2$.
  Then $\alpha \sigma g \sim \sigma'$, and so $\alpha \sigma gf \sim \sigma' 
  f$.

  Now $gf$ is definable,
  and it follows that $\alpha \sigma \sim \alpha \sigma gf$.
  Thus $\alpha \sigma \sim \sigma' f$.
  Then $\alpha' := \sigma' f \sigma^{-1} \sim \alpha$.


  So $\theta := \alpha\alpha'^{-1}$ is a $\B_2$-definable isomorphism,
  and $\theta \sigma' f = \alpha \sigma$. Since we view $\A_1$ and $\A_2$ in 
  $\B_2$ via $f$ and $\alpha$, this is as desired.

  The proof in the case of definitions is identical.
\end{proof}

\subsection{Characterisation of isomorphisms of infinitesimal subgroups}
\label{s:borelTits}

\begin{lemma} \label{l:isomExtend}
  If $G,H$ are compact connected centreless linear Lie groups,
  and $\Ru \succ  \R$ is a proper real closed field extension of $\R$,
  and $\theta : \Goo(\Ru) \xrightarrow{\cong} \Hoo(\Ru)$ is an $(\Ru;\O)$-definable group 
  isomorphism,
  then $\theta$ extends to an $\Ru$-definable algebraic isomorphism $G(\Ru) 
  \xrightarrow{\cong} H(\Ru)$.
\end{lemma}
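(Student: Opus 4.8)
The plan is to reconstruct the whole linear-algebraic groups $G(\Ru)$ and $H(\Ru)$ from their infinitesimal subgroups and to show that a definable isomorphism of the infinitesimal pieces propagates. First I would recall that since $G$ is compact, connected and centreless, its adjoint representation $\Ad : G \hookrightarrow \GL(\g)$ is a faithful algebraic representation, so we may identify $G(\Ru)$ with $\Ad(G(\Ru)) \leq \GL_n(\Ru)$ acting on $\g(\Ru)$, and similarly $H(\Ru)$ with $\Ad(H(\Ru)) \leq \GL_m(\Ru)$; here $n = \dim G$, $m = \dim H$. Then $\Goo(\Ru)$ sits inside $\GL_n(\Ru)$ as $G(\Ru) \cap (I + \Mat_n(\m))$, and likewise for $\Hoo(\Ru)$. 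The key point is that $G(\Ru)$ is generated (in a bounded number of steps, since $G$ is semialgebraically connected) by any semialgebraic neighbourhood of the identity, and in particular by $\st_G^{-1}$ of a small neighbourhood; so it suffices to produce the algebraic isomorphism on $\Goo(\Ru)$ and on standard points and then piece together. Since an isomorphism of the abstract groups $G(\R) \to H(\R)$ is automatically continuous (Cartan--van der Waerden, mentioned in the paper) and hence algebraic, the standard-part direction is under control; the substance is the infinitesimal part.

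The main step is to pass from $\theta$ to its differential. Since $\theta$ is $(\Ru;\O)$-definable, Fact~\ref{f:semialgebraic} lets us find a semialgebraic neighbourhood of the identity in $\Goo(\Ru)$ on which $\theta$ agrees with a semialgebraic ($\Ru$-definable) $C^1$ map $f$; differentiating at the identity yields an $\Ru$-linear map $d\theta : \g(\Ru) \to \h(\Ru)$. Because $\theta$ is a group homomorphism near $e$, $d\theta$ is a Lie algebra homomorphism; because $\theta$ is an isomorphism with inverse again $(\Ru;\O)$-definable, $d\theta$ is a Lie algebra isomorphism $\g(\Ru) \xrightarrow{\cong} \h(\Ru)$. (One should note $\dim G = \dim H$ follows, since $\Goo(\Ru)$ and $\Hoo(\Ru)$ have the same $\Ru$-definable dimension once we have the bi-interpretation, or more directly from the existence of the $C^1$ homeomorphism $f$.) Now $\g$, $\h$ are semisimple real Lie algebras defined over $\R$; any Lie algebra isomorphism $\g(\Ru) \to \h(\Ru)$ is $\Ru$-rational, and since both are obtained by base change from $\R$, it restricts to an isomorphism over $\R$ (or, if one prefers, one argues everything at the level of the complex semisimple Lie algebras and uses that automorphisms of a semisimple group are algebraic). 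Exponentiating / using that $G, H$ are the algebraic groups with these Lie algebras and adjoint type, $d\theta$ induces an $\Ru$-definable algebraic isomorphism $\Phi : G(\Ru) \xrightarrow{\cong} H(\Ru)$.

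Finally I would check that $\Phi$ actually extends $\theta$, i.e.\ $\Phi\negmedspace\restriction_{\Goo(\Ru)} = \theta$. On a small semialgebraic neighbourhood $V$ of the identity in $\Goo(\Ru)$, both $\Phi\negmedspace\restriction_V$ and $f$ are $\Ru$-definable $C^1$ maps with the same differential at $e$ and are both local homomorphisms; by uniqueness of the homomorphism integrating a given Lie algebra map (the implicit function theorem / ODE uniqueness over the real closed field $\Ru$, as used in \cite{OPP-groupsRings}), $\Phi$ and $f$ agree on a neighbourhood of $e$, hence $\Phi$ and $\theta$ agree on a neighbourhood of $e$ in $\Goo(\Ru)$. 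Since $\Goo(\Ru)$ is generated by any such neighbourhood — $\SO_3^{00}$-type divisibility, or the general fact that $H^{00}$ has no proper relatively open subgroup, gives this — and both $\Phi$ and $\theta$ are homomorphisms, they agree on all of $\Goo(\Ru)$. Thus $\Phi$ is the desired extension.

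The hard part will be the passage from ``$\theta$ is $(\Ru;\O)$-definable'' to ``$d\theta$ is a genuine Lie-algebra isomorphism defined over $\R$'': one must be careful that the local semialgebraic approximation $f$ provided by Fact~\ref{f:semialgebraic} is defined at, and differentiable at, a point where $\theta$ is a homomorphism (so one should translate a generic point of $\Goo(\Ru)$ back to the identity, exactly as in the proof of Lemma~\ref{l:gookdef}), and that the resulting Lie-algebra isomorphism between the $\Ru$-forms descends to $\R$ — this uses rigidity of semisimple Lie algebras rather than any soft model theory. Everything else (Cartan--van der Waerden for the standard part, generation of $G(\Ru)$ and of $\Goo(\Ru)$ by neighbourhoods of $e$, uniqueness of the integrating homomorphism) is routine given the machinery already in the paper.
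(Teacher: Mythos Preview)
Your approach via differentiating $\theta$ to a Lie-algebra isomorphism and then integrating back to an algebraic group isomorphism is workable, but it is quite different from the paper's argument and carries more overhead. The paper simply takes the $\Ru$-Zariski closure $\Gamma \leq (G\times H)(\Ru)$ of the graph of $\theta$; this is an algebraic subgroup, and $\pi_G : \Gamma \to G(\Ru)$ is onto because $\Goo(\Ru)$ is Zariski-dense in the connected group $G$. Choosing $(g,\theta(g))$ algebraically generic in $\Gamma$ and using that definable closure in $\RCVF$ equals field-theoretic algebraic closure gives $\trd(\theta(g)/g)=0$, so $\pi_G$ has finite generic fibre, hence finite kernel, hence trivial kernel (centrelessness). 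Symmetrically $\pi_H$ is an isomorphism, and $\pi_H\pi_G^{-1}$ is the required extension. No charts, no differentiation, no integration step; the centreless hypothesis is used once, cleanly.

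Your route buys a more explicit description of the extension (it is conjugation by $d\theta$ in the adjoint picture), at the cost of several delicate points. Two of these deserve correction. First, the assertion that the $\Ru$-linear isomorphism $d\theta:\g(\Ru)\to\h(\Ru)$ ``restricts to an isomorphism over $\R$'' is false in general (take $\g=\h$ and $d\theta=\Ad_g$ for $g\in G(\Ru)\setminus G(\R)$); your parenthetical alternative is the right move --- since $G,H$ are adjoint, set $\Phi(g):=d\theta\circ g\circ d\theta^{-1}$ directly, and descent to $\R$ is irrelevant. Second, invoking ``ODE uniqueness over the real closed field $\Ru$'' to conclude $\Phi|_{\Goo(\Ru)}=\theta$ is unsafe (the paper deliberately avoids ODE uniqueness in the proof of Theorem~\ref{t:defbleFieldsRCVF}); instead argue directly from centrelessness: with $\psi:=\Phi^{-1}\theta:\Goo(\Ru)\to G(\Ru)$ a homomorphism and $d\psi_e=\id$, differentiate $\psi(ghg^{-1})=\psi(g)\psi(h)\psi(g)^{-1}$ at $h=e$ to get $\Ad_g=\Ad_{\psi(g)}$, whence $\psi(g)g^{-1}\in Z(G(\Ru))=\{e\}$.
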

\begin{proof}
  We may assume $\Ru$ is $\aleph_0$-saturated.
  Let $\Gamma \subseteq  (G\times H)(\Ru)$ be the $\Ru$-Zariski closure of the graph 
  $\Gamma_\theta$ of $\theta$.
  Since $\Gamma_\theta$ is an abstract subgroup, $\Gamma$ is (the set of 
  $\Ru$-points of) an algebraic subgroup over $\Ru$.
  The image of the projection $\pi_G : \Gamma \rightarrow  G(\Ru)$ contains 
  $\Goo(\Ru)$, which is Zariski-dense in $G$ since $G$ is connected, and thus 
  $\pi_G(\Gamma) = G(\Ru)$.

  Let $k \leq  \Ru$ be a finitely generated field over which $\theta$ is defined. 
  Since $\Gamma_\theta$ is Zariski dense in $\Gamma$, there exists 
  $(g,\theta(g)) \in \Gamma_\theta$ which is algebraically generic in $\Gamma$ 
  over $k$ (i.e.\ of maximal transcendence degree);
  indeed, the Zariski density implies that $\Gamma_\theta$ is contained in no 
  subvariety of $\Gamma$ over $k$ of lesser dimension, and so such a generic 
  exists by $\aleph_0$-saturation of $\Ru$.
  But definable closure in $\RCVF$ agrees with field-theoretic algebraic 
  closure \cite[Theorem~8.1(1)]{Mellor-RCVFEI}, so $\trd(\theta(g)/k(g))=0$ 
  and thus $\pi_G^{-1}(g)$ is finite.
  Then also $\ker(\pi_G)$ is finite, and hence central.
  So since $G$ is centreless, $\pi_G$ is an isomorphism.

  Similarly, $\pi_H$ is an isomorphism.
  So $\theta$ extends to the algebraic isomorphism $\pi_H \o \pi_G^{-1}$.
\end{proof}

\begin{theorem} \label{t:borelTits}
  Suppose $G_1$ and $G_2$ are compact simple centreless linear Lie groups,
  and $\Ru_i \succ  \R$ is a proper real closed field extension of $\R$ for 
  $i=1,2$,
  and $\sigma : G_1^{00}(\Ru_1) \xrightarrow{\cong} G_2^{00}(\Ru_2)$ is an abstract group 
  isomorphism.

  Then there exist
  a valued field isomorphism $\sigma' : (\Ru_1,\O_1) \xrightarrow{\cong} (\Ru_2,\O_2)$
  and an $\Ru_2$-definable isomorphism $\theta : G_3(\Ru_2) \xrightarrow{\cong} G_2(\Ru_2)$,
  where $G_3 = \sigma'(G_1)$,
  such that $\sigma = \theta\negmedspace\restriction_{G_3^{00}} \o \sigma'\negmedspace\restriction_{G_1^{00}}$.

  In particular, $\sigma$ extends to an abstract group isomorphism $G_1(\Ru_1) 
  \xrightarrow{\cong} G_2(\Ru_2)$.
\end{theorem}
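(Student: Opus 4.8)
The plan is to obtain Theorem~\ref{t:borelTits} by assembling four earlier results: the bi-interpretation of Theorem~\ref{t:mainBiterp}, the fact that $\RCVF$ is self-recollecting for definitions (Theorem~\ref{t:defbleFieldsRCVF}(b)), the abstract transfer Lemma~\ref{l:abstractNonsenseBT}, and the rigidity Lemma~\ref{l:isomExtend}. Set $\A_i := (G_i^{00}(\Ru_i);*)$ and $\B_i := (\Ru_i;+,\cdot,\O_i)$ for $i=1,2$. By Theorem~\ref{t:mainBiterp} the inclusion $\iota_i : \A_i \hookrightarrow \B_i$ is a definition which extends to a bi-interpretation, and all the component maps of that bi-interpretation are definitions. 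Since $\RCVF$ is complete we have $\Th(\B_1) = \Th(\B_2) = \RCVF$, and by Theorem~\ref{t:defbleFieldsRCVF}(b) this theory is self-recollecting for definitions. Hence Lemma~\ref{l:abstractNonsenseBT}, in its ``definitions'' form, applies to the abstract group isomorphism $\sigma : \A_1 \to \A_2$ and produces a valued field isomorphism $\sigma' : (\Ru_1,\O_1) \xrightarrow{\cong} (\Ru_2,\O_2)$ together with a $\B_2$-definable isomorphism $\theta_0 : \sigma'(\A_1) \to \A_2$ with $\sigma = \theta_0 \circ (\sigma'\negmedspace\restriction_{\A_1})$.

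It then remains to identify the domain of $\theta_0$ and to upgrade $\theta_0$ to an algebraic isomorphism of the ambient groups. Viewing $\A_1$ inside $\B_1$ via $\iota_1$, it is the definable set $G_1^{00}(\Ru_1) = G_1(\Ru_1) \cap (I+\Mat_d(\m_1)) \subseteq \Ru_1^{d^2}$ of \eqnref{Goo}; applying the valued field isomorphism $\sigma'$ coordinatewise and using $\sigma'(\O_1)=\O_2$ and $\sigma'(\m_1)=\m_2$, we get $\sigma'(\A_1) = G_3(\Ru_2) \cap (I+\Mat_d(\m_2)) =: G_3^{00}(\Ru_2)$, where $G_3 := \sigma'(G_1)$ is the algebraic group over $\Ru_2$ obtained by applying $\sigma'$ to the parameters defining $G_1$. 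Thus $\theta_0 : G_3^{00}(\Ru_2) \to G_2^{00}(\Ru_2)$ is an $(\Ru_2;\O_2)$-definable group isomorphism, and I would extend it to an $\Ru_2$-definable algebraic isomorphism $\theta : G_3(\Ru_2) \xrightarrow{\cong} G_2(\Ru_2)$ by Lemma~\ref{l:isomExtend}. Then $\theta\negmedspace\restriction_{G_3^{00}} = \theta_0$, so $\sigma = \theta\negmedspace\restriction_{G_3^{00}} \circ \sigma'\negmedspace\restriction_{G_1^{00}}$, as required. The final assertion follows immediately: $\sigma'\negmedspace\restriction_{G_1(\Ru_1)} : G_1(\Ru_1) \to G_3(\Ru_2)$ is a group isomorphism, the group operations being polynomial over the fixed defining parameters, so $\theta \circ \sigma'\negmedspace\restriction_{G_1(\Ru_1)}$ is an abstract group isomorphism $G_1(\Ru_1) \xrightarrow{\cong} G_2(\Ru_2)$ restricting on $G_1^{00}(\Ru_1)$ to $\sigma$.

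The one point requiring care, and the only real obstacle, is that Lemma~\ref{l:isomExtend} is stated for groups defined over $\R$ whereas $G_3$ is defined over $\Ru_2$. I would handle this by observing that the proof of Lemma~\ref{l:isomExtend} goes through verbatim in this setting: it uses only that $G_3^{00}(\Ru_2)$ is Zariski-dense in $G_3$ — which is inherited from the Zariski density of $G_1^{00}(\Ru_1)$ in the connected group $G_1$, since $\sigma'$ is a field isomorphism — that $G_3$ is centreless, likewise inherited from $G_1$ via $\sigma'$, and that definable and algebraic closure coincide in $\RCVF$ (as $G_2$ already satisfies the hypotheses of Lemma~\ref{l:isomExtend} directly). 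As in that lemma, one may first pass to an $\aleph_0$-saturated elementary extension of $\Ru_2$, since the conclusion concerns only the $(\Ru_2;\O_2)$-definable map $\theta_0$ and is insensitive to such an extension; equivalently, one simply restates Lemma~\ref{l:isomExtend} for algebraic groups over an arbitrary model of $\RCVF$. Apart from this bookkeeping, and keeping straight which saturation hypotheses are harmless, the argument is routine.
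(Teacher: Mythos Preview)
Your proof is correct and follows exactly the paper's route: apply Lemma~\ref{l:abstractNonsenseBT} (via Theorem~\ref{t:mainBiterp} and Theorem~\ref{t:defbleFieldsRCVF}(b)) to obtain $\sigma'$ and the $(\Ru_2;\O_2)$-definable $\theta_0$, identify $\sigma'(G_1^{00}(\Ru_1))=G_3^{00}(\Ru_2)$, and then invoke Lemma~\ref{l:isomExtend} to extend $\theta_0$ to an algebraic isomorphism. Your final paragraph on the applicability of Lemma~\ref{l:isomExtend} to $G_3$ (which is a priori only defined over $\Ru_2$) is in fact more careful than the paper, which simply applies the lemma without comment.
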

\begin{proof}
  By Theorem~\ref{t:mainBiterp}, Theorem~\ref{t:defbleFieldsRCVF}(b), and 
  Lemma~\ref{l:abstractNonsenseBT}, there exist an isomorphism $\sigma' : 
  (\Ru_1,\O_1) \xrightarrow{\cong} (\Ru_2,\O_2)$ and an $(\Ru_2,\O_2)$-definable isomorphism 
  $\theta' : \sigma'(G_1^{00}(\Ru_1)) \xrightarrow{\cong} G_2^{00}(\Ru_2)$
  such that $\sigma = \theta' \o \sigma'\negmedspace\restriction_{G_1^{00}}$.

  Then $\sigma'(G_1^{00}(\Ru_1)) = G_3^{00}(\Ru_2)$.
  Thus by Lemma~\ref{l:isomExtend}, $\theta'$ extends to an $\Ru_2$-definable 
  algebraic isomorphism $\theta : G_3(\Ru_2) \xrightarrow{\cong} G_2(\Ru_2)$, as required.
\end{proof}

\begin{remark}
  We have stated the results of this section in terms of $\Goo$, but it is 
  easy to see that they apply equally to other infinitesimal subgroups as in 
  Remark~\ref{r:otherValns}.
\end{remark}

\section{Infinitesimal subgroups of definably compact groups}
\label{s:DC}

In this section, we prove Theorem~\ref{t:DCBiterp} by combining Theorem~\ref{t:mainBiterp} 
with results in the literature on definably compact groups and $\Goo$.

We work in a sufficiently saturated o-minimal expansion $M$ of a real closed 
field, say $\kappa$-saturated where $\kappa$ is sufficiently large. (In fact 
$\kappa=2^{\aleph_0}$ suffices for the arguments below; moreover, it follows 
after the fact that Theorem~\ref{t:DCBiterp} holds with only $\kappa=\aleph_0$, but 
we do not spell this out).
For $G$ a definable group, let $\Goo$ be the smallest $\bigwedge$-definable 
(in the sense of $M$) subgroup of bounded index. Here, a 
\recalldefn{$\bigwedge$-definable set} is a set defined by an infinite 
conjunction of formulas over a common parameter set $A \subseteq  M$ with $|A| < 
\kappa$.

\begin{lemma} \label{l:oostruct}
  Let $G$ and $H$ be definably compact definable groups.
  \begin{enumerate}[(i)]\item If $\theta : G \twoheadrightarrow  H$ is a definable surjective homomorphism, then 
  $\theta(\Goo) = \Hoo$.
  \item If $H$ is a definable subgroup of $G$, then $\Hoo = \Goo \cap H$.
  \item $\Goo$ is the unique $\bigwedge$-definable subgroup of bounded index 
  which is divisible and torsion-free.
  \item $(G\times H)^{00} = \Goo \times \Hoo$
  \end{enumerate}
\end{lemma}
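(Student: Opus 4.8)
The plan is to establish the four parts largely by standard facts about $\bigwedge$-definable subgroups of bounded index in o-minimal expansions of fields, citing the literature on $\Goo$ for definably compact groups where needed.

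For (i): since $\theta$ is a definable surjective homomorphism and $\Goo$ has bounded index in $G$, the image $\theta(\Goo)$ is a $\bigwedge$-definable (indeed, as the image of a $\bigwedge$-definable group under a definable map, it is $\bigwedge$-definable in a saturated enough model) subgroup of bounded index in $H$, hence contains $\Hoo$ by minimality; conversely, $\theta^{-1}(\Hoo)$ is a bounded-index $\bigwedge$-definable subgroup of $G$, hence contains $\Goo$, so $\theta(\Goo) \subseteq \Hoo$. For (ii): $\Goo \cap H$ is $\bigwedge$-definable and of bounded index in $H$ (since $[H : \Goo \cap H] \le [G : \Goo]$ is bounded), so $\Hoo \subseteq \Goo \cap H$; for the reverse inclusion, one uses that $\Hoo$ is divisible (by the structure theory, e.g.\ the results of Berarducci--Otero--Peterzil--Pillay and Hrushovski--Peterzil--Pillay identifying $G/\Goo$ with a compact Lie group and $\Goo$ with the kernel), and a divisible subgroup of bounded index inside $G$ must be contained in every bounded-index subgroup up to the relevant closure argument — more directly, one cites that $H^{00}$ computed inside $H$ agrees with $H^{00}$ computed inside $G$, which is part (ii) of the cited structure results (e.g.\ \cite{P-groupsFieldsOMin} or the DCC machinery of \cite{PPS-defblySimple}). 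For (iv): $\Goo \times \Hoo$ is $\bigwedge$-definable of bounded index in $G \times H$, hence contains $(G\times H)^{00}$; conversely, applying (i) to the two projections $G \times H \to G$ and $G \times H \to H$ shows that $(G\times H)^{00}$ projects onto $\Goo$ and onto $\Hoo$, and since $(G \times H)^{00}$ contains $\{e\}\times \Hoo$ and $\Goo \times \{e\}$ (these being the $H^{00}$ and $G^{00}$ of the obvious subgroups, by (ii)), it contains their product $\Goo \times \Hoo$.

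For (iii), which I expect to be the main point requiring input from the literature: that $\Goo$ is divisible and torsion-free is exactly the content of the description of $\Goo$ for definably compact $G$ (the quotient $G/\Goo$ is a compact connected Lie group carrying all the torsion, and $\Goo$ is the ``infinitesimal'' part, torsion-free and divisible); this is the relevant statement from the work on the Pillay conjecture. For uniqueness: suppose $N$ is another $\bigwedge$-definable subgroup of bounded index that is divisible and torsion-free. Then $N \cap \Goo$ has bounded index in both, so contains $\Goo$'s minimal such subgroup, i.e.\ $N \supseteq \Goo$ is automatic only for $\Goo$ minimal; but we need $N = \Goo$. One argues: $\Goo \subseteq N$ always, and $N/\Goo$ is a subgroup of bounded index in $G/\Goo$, which is a compact connected Lie group with no proper bounded-index subgroup except via torsion considerations — more precisely $G/\Goo$ has no proper $\bigwedge$-definable subgroup of bounded index other than itself is false in general, so instead use: $N/\Goo$, being a quotient of $N$ by the $\bigwedge$-definable $\Goo$, is a divisible subgroup of the compact Lie group $G/\Goo$ of full (bounded) index hence equals a connected divisible subgroup; but a proper connected subgroup of a compact Lie group has infinite index, contradiction unless $N/\Goo$ is everything, i.e.\ $N = G$ — that is too strong. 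The correct statement is that $N = \Goo$ because any bounded-index $\bigwedge$-definable $N$ has $\Goo \subseteq N$, and then $N$ torsion-free forces $N/\Goo$ to be torsion-free; but $N/\Goo \le G/\Goo$ is a subgroup of a torsion-rich compact Lie group, and a torsion-free subgroup of bounded (hence finite, Lie-theoretically) index in a compact Lie group must be trivial, so $N = \Goo$. I would carry out exactly this last argument, citing the identification of $G/\Goo$ with a compact Lie group from \cite{HR-metastable}-adjacent sources — in this paper's bibliography, the relevant reference is the structure theory invoked implicitly around \eqnref{Goo} and in \secref{DC}. The hard part is making the ``torsion-free subgroup of bounded index in a compact Lie group is trivial'' step precise at the level of $\bigwedge$-definable subgroups, which is where one leans on the cited results.
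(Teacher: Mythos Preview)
Your treatment of (i) and (ii) matches the paper: (i) is the same one-line observation about preimages and images being $\bigwedge$-definable of bounded index, and for (ii) you correctly get one inclusion and then defer to a citation for the other --- the paper cites \cite[Theorem~4.4]{B-spectra} outright. For (iii) the paper likewise just cites \cite[Corollary~4.7]{B-spectra}, so your instinct that this is ``the main point requiring input from the literature'' is exactly right.

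Your attempted direct argument for uniqueness in (iii), however, has a real gap. You write that ``$N$ torsion-free forces $N/\Goo$ to be torsion-free''; this does not follow formally --- a quotient of a torsion-free group, even by a divisible normal subgroup, need not be torsion-free (already $\Z/2\Z$ is a quotient of $\Z$). What one actually needs is that every torsion element of the compact Lie group $G/\Goo$ lifts to a torsion element of $G$; granted that, a nontrivial torsion element of $N/\Goo$ would lift into $x\Goo \subseteq N$ and contradict torsion-freeness of $N$. That lifting statement is itself a nontrivial structural fact about definably compact groups in o-minimal structures, so in the end you are citing the literature either way, which is precisely what the paper does.

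For (iv) your route via (ii) is correct, but the paper's is a one-liner from (iii): $\Goo \times \Hoo$ is $\bigwedge$-definable of bounded index and is divisible and torsion-free, so by the uniqueness in (iii) it must equal $(G\times H)^{00}$.
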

\begin{proof}
  \begin{enumerate}[(i)]\item $\theta^{-1}(\Hoo)$ resp.\ $\theta(\Goo)$ is a $\bigwedge$-definable 
  bounded index subgroup of $G$ resp.\ $H$.
  \item \cite[Theorem~4.4]{B-spectra}.
  \item \cite[Corollary~4.7]{B-spectra}.
  \item This follows directly from (iii).
  \end{enumerate}
\end{proof}

Say a group $(G;*)$ is the \defn{definable internal direct product} of 
its subgroups $H_1,\ldots ,H_n$ if each $H_i$ is $(G;*)$-definable and 
$(h_1,h_2,\ldots ,h_n) \mapsto  h_1h_2\ldots h_n$ is an isomorphism $\prod_i H_i \rightarrow  G$.
The following lemma is an immediate consequence of the definition.

\begin{lemma} \label{l:biterpProd}
  If a group $(G;*)$ is the definable internal direct product of subgroups 
  $H_1,\ldots ,H_n$,
  then $(G;*)$ is bi-interpretable with the disjoint union of the $(H_i;*)$.
\end{lemma}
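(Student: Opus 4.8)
The statement to prove is Lemma~\ref{l:biterpProd}: if $(G;*)$ is the definable internal direct product of subgroups $H_1,\dots,H_n$, then $(G;*)$ is bi-interpretable with the disjoint union of the $(H_i;*)$.

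The plan is to exhibit the two interpretations explicitly and check that the round trips are definable. In one direction, each $H_i$ is by hypothesis a $(G;*)$-definable subset of $G$, so the disjoint union $\coprod_i (H_i;*)$ is interpreted in $(G;*)$ by taking the sort for $H_i$ to be the definable set $H_i \subseteq G$ with its induced group structure (which is the restriction of $*$, hence $(G;*)$-definable); the fact that there is "no further structure between the sorts" in the disjoint union means this is genuinely an interpretation — every definable set of the disjoint union is, sort by sort, a Boolean combination of products of $H_i$-definable sets, and each such is $(G;*)$-definable. In the other direction, the multiplication map $m\colon \prod_i H_i \to G$, $(h_1,\dots,h_n)\mapsto h_1 h_2\cdots h_n$, is by hypothesis an isomorphism of groups, and it is definable in the disjoint union since $*$ restricted to the (images of the) $H_i$ is part of the structure of the disjoint union and the product set $\prod_i H_i$ lives in the disjoint union. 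This gives a definition of $(G;*)$ in $\coprod_i(H_i;*)$.

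It then remains to observe that the two composites are definable. Starting in $(G;*)$, interpreting the disjoint union and then re-interpreting $(G;*)$ via $m$ gives back exactly $(G;*)$ with the identity map as the comparison map, which is trivially $(G;*)$-definable — this uses that $m$ really is onto $G$ (every element of $G$ is uniquely $h_1\cdots h_n$). Starting in $\coprod_i(H_i;*)$, interpreting $(G;*)$ via $m$ and then re-interpreting the disjoint union by slicing out the copies of the $H_i$ inside $G$, the comparison map sends the $H_i$-sort to the subset $\{\,h_1\cdots h_n : h_j = e \text{ for } j\neq i\,\}$ of $G$; pulling this back along $m$, the comparison isomorphism on the $i$-th sort is the map $h_i \mapsto (e,\dots,e,h_i,e,\dots,e)$, which is definable in the disjoint union (the constants $e$ are $0$-definable in each $(H_j;*)$). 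Hence both composites are definable, and $(f,g)$ with $f$ the first interpretation and $g=m$ the second form a bi-interpretation.

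The only point requiring any care — and the place where the hypothesis is really used — is verifying that $f$ is an interpretation in the strict sense, i.e.\ that \emph{every} definable subset of a power of the disjoint union has $(G;*)$-definable image; this reduces, by the definition of the disjoint union as having no cross-sort structure, to the one-sort case, where it is immediate because the induced structure on each $H_i$ as a definable subset of $G$ is at least as rich as, and in fact by definition of the disjoint union exactly, the pure group structure $(H_i;*)$. So I do not expect a genuine obstacle here; the lemma is essentially a bookkeeping exercise in unwinding the definitions of "definable internal direct product", "disjoint union of structures", and "bi-interpretation", and the proof is short.
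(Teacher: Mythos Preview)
Your proposal is correct and is precisely the unwinding the paper has in mind: the paper states the lemma and says only that it ``is an immediate consequence of the definition'', giving no further proof. One minor imprecision: the composite interpretation of $(G;*)$ in itself has universe $\prod_i H_i \subseteq G^n$ rather than $G$, so the comparison map is $m$ rather than the identity---but $m$ is visibly $(G;*)$-definable, so the conclusion stands.
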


The following Fact extracts from the literature the key results we will need 
on the structure of definably connected definably compact groups.
A definable group is \recalldefn{definably simple} if it contains no proper 
non-trivial normal definable subgroup.
First recall that if $G\sub M^n$ is a definable group in an o-minimal 
structure $M$ then, by \cite{P-groupsFieldsOMin}, it admits a topology with a 
definable basis which makes it into a topological group. Moreover, this is the 
unique topology which agrees with the ambient $M^n$-topology on a definable 
subset of $G$ whose complement has smaller dimension. All topological notions 
below (e.g.\ definable compactness) are with respect to this topology.

\begin{fact} \label{f:DCLit}
  Let $G$ be a definably connected definably compact definable group.
  Let $G' := (G,G)$ be the derived subgroup of $G$. Then:

  \begin{enumerate}[(i)]\item $G'$ and $Z(G)$ are definable and definably compact.
  \item $G$ is the product of its subgroups $G'$ and $Z(G)^0$,
    and $Z(G)^0 \cap G'$ is finite.
  \item $Z(G')$ is finite, and $G'/Z(G')$ is the direct product of finitely 
  many definably simple definably compact definable subgroups $H_i$.
  \item If $G$ is definably simple, then there exists a compact real linear 
  Lie group $H$ and a real closed field $\Ru$ extending $\R$ such that 
  $(\Goo;*)$ is isomorphic to $(\Hoo;*)$, where $\Hoo := \Hoo(\Ru)$ is the 
  infinitesimal neighbourhood of the identity as defined in \secref{intro}.
  \end{enumerate}
\end{fact}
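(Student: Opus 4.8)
The plan is to assemble parts (i)--(iv) from results already in the literature on definably compact groups in o-minimal structures, which is why this is stated as a ``Fact'' rather than proved in detail. For parts (i)--(iii), I would invoke the structure theory of definably connected definably compact groups: by \cite{PPS-defblySimple} (and the surrounding o-minimal group theory), such a group $G$ has definable derived subgroup $G'$ and definable centre $Z(G)$, both definably compact, and $G$ decomposes ``up to finite'' as $G' \cdot Z(G)^0$ with finite intersection; this gives (i) and (ii). For (iii), one uses that the semisimple part $G'/Z(G')$ is, by the o-minimal analogue of the structure theorem for semisimple groups (again \cite{PPS-defblySimple}, building on \cite{PPS-simpleAlgGrpsRCF}), a finite direct product of definably simple definably compact definable groups, with $Z(G')$ finite.

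The substantive point, and the one I expect to be the main obstacle, is part (iv): given a definably simple definably compact group $G$ definable in $M$, produce a genuine compact real linear Lie group $H$ and a real closed field $\Ru \succ \R$ with $(\Goo;*) \cong (\Hoo(\Ru);*)$. The approach is: by the classification of definably simple groups in o-minimal expansions of fields (\cite{PPS-simpleAlgGrpsRCF}, \cite{PPS-defblySimple}), such a $G$ is definably isomorphic, over a real closed subfield $\Ru \preceq M$, to the group of $\Ru$-points of a linear algebraic group which over $\R$ is the complexification-type datum of a compact simple Lie group; more concretely, $G$ is definably isomorphic to $H(\Ru)$ where $H$ is a compact simple centreless real linear Lie group, with the isomorphism defined over $\Ru$ and semialgebraic. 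Since the isomorphism is $\Ru$-definable and carries the ambient topology to the ambient topology on a large definable set, it carries the $\bigwedge$-definable bounded-index subgroup $\Goo$ to $H^{00}$ computed inside $H(\Ru)$ — here one uses the characterisation of $\Goo$ as in \eqnref{Goo}, i.e.\ as the intersection of $\Ru$-definable identity neighbourhoods, which is preserved under a definable isomorphism of topological groups. Thus $(\Goo;*) \cong (H^{00}(\Ru);*)$ as abstract groups.

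One technical caveat to flag in the write-up: the real closed field $\Ru$ arising from the classification sits inside $M$ and need not literally be $\Ru \succ \R$ as in \secref{intro}; but since $H$ is semialgebraic over $\R$ and $\Ru$ is a real closed field properly extending $\R$ (it must be proper, as $G$ is infinite and $M$ is saturated so $\Goo \neq \{e\}$, whereas $H^{00}(\R) = \{e\}$), the definition of $H^{00}$ in $H(\Ru)$ matches the one in \secref{intro}, and the completeness of $\RCF$ (or just the explicit description \eqnref{Goo}) makes the abstract group $(H^{00}(\Ru);*)$ independent of the choice of such $\Ru$ up to isomorphism. So the statement ``there exists a real closed field $\Ru$ extending $\R$'' is exactly what the classification delivers. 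With (iv) in hand, the pieces combine: one applies Lemma~\ref{l:oostruct} to push $\Goo$ through the decomposition of (ii)--(iii) — $(G\times H)^{00} = \Goo\times\Hoo$, $\Goo\cap H = \Hoo$ for definable subgroups, and $\theta(\Goo)=\Hoo$ for definable surjections — so that $\Goo$ is a definable internal direct product of the central part (a divisible torsion-free abelian group by Lemma~\ref{l:oostruct}(iii)) and the infinitesimal subgroups of the simple factors, each of which by (iv) and Theorem~\ref{t:mainBiterp} is bi-interpretable with an $\RCVF$; Lemma~\ref{l:biterpProd} then yields Theorem~\ref{t:DCBiterp}. But that final assembly is the content of the next results, not of Fact~\ref{f:DCLit} itself, whose proof is just the citations above.
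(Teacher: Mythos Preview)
Your overall plan---assemble (i)--(iv) from the literature---matches the paper's, and your citations for (i)--(iii) are in the right neighbourhood, though the paper actually draws these from \cite{HPP-central} (Corollary~6.4 and Fact~1.2) rather than directly from \cite{PPS-defblySimple}; the decomposition $G = G' \cdot Z(G)^0$ with finite intersection, in particular, is from there.

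There is, however, a genuine gap in your treatment of (iv). You correctly invoke the classification of definably simple groups to obtain a definable isomorphism $G \cong H(\Ru)$ with $H$ a compact real Lie group. But you then assert that this isomorphism carries $\Goo$ to the infinitesimal neighbourhood $\st^{-1}(e)$, justifying this by ``the characterisation of $\Goo$ as in \eqnref{Goo}, i.e.\ as the intersection of $\Ru$-definable identity neighbourhoods''. This is circular: \eqnref{Goo} is not a general characterisation of $\Goo$ but precisely the statement being proved---the paper's introduction explicitly says that $\ker(\st_G) = \Goo(\Ru)$ holds \emph{by} Fact~\ref{f:DCLit}(iv). The definition of $\Goo$ in the o-minimal setting is the smallest bounded-index $\bigwedge$-definable subgroup of $G$ in $M$; the infinitesimal neighbourhood $\st^{-1}(e)$ is one such subgroup, so $\Goo \subseteq \st^{-1}(e)$, but the reverse inclusion is the substantive content of (iv) and does not follow from topology-preservation under a definable isomorphism.

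The paper fills this gap by citing \cite{Pillay-conjecture}, specifically Case~II in the proof of Proposition~3.6 there, which shows directly that for $H$ a compact semialgebraic group over $\R$, the smallest $M$-$\bigwedge$-definable bounded-index subgroup of $H(R)$ is exactly the infinitesimal neighbourhood $\st^{-1}(e)$. (The compactness of $H$ itself is obtained via \cite{PS-defbleCompactness}.) You should cite this, or an equivalent source for Pillay's conjecture in the semisimple case, rather than appealing to \eqnref{Goo}.
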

\begin{proof}
  \begin{enumerate}[(i)]\item By \cite[Corollary~6.4(i)]{HPP-central}, $G'$ is definable. Since 
  definable subgroups are closed, both groups are definably compact.
  \item This is \cite[Corollary~6.4(ii)]{HPP-central}.
  \item This is immediate from \cite[Corollary~6.4(i)]{HPP-central} and 
  \cite[Fact~1.2(3)]{HPP-central} (based on 
  \cite[Theorem~4.1]{PPS-defblySimple}).
  \item This follows from the proof of 
  \cite[Proposition~3.6]{Pillay-conjecture}. Indeed, as discussed there (see 
  also \cite[Fact~1.2(1)]{HPP-central}), $G$ is definably isomorphic to $H(R)$ 
  for $R$ a definable real closed field and $H$ a semialgebraic linear group 
  over a copy of the real field within $R$. Since $G$ is definably compact, 
  $H$ is compact (see \cite[Theorem~2.1]{PS-defbleCompactness}). Now {\em Case 
  II} in the proof of \cite[Proposition~3.6]{Pillay-conjecture} shows that the 
  smallest $M$-$\bigwedge$-definable subgroup $H^{00}(R)$ of $H$ is precisely the 
  infinitesimal neighbourhood $\st^{-1}(e)$, as required.
  \end{enumerate}
\end{proof}

We now repeat the statement of Theorem~\ref{t:DCBiterp}, and prove it.
{
  \renewcommand{\thetheorem}{\ref{t:DCBiterp}}
  \begin{theorem}
    Let $(G;*)$ be an infinite definably compact group definable in a 
    sufficiently saturated o-minimal expansion $M$ of a field.
    Then $(\Goo(M);*)$ is bi-interpretable with the disjoint union of a 
    (possibly trivial) divisible torsion-free abelian group and finitely many 
    (possibly zero) real closed convexly valued fields.
  \end{theorem}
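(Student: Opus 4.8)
The plan is to reduce to the definably simple case handled by Theorem~\ref{t:mainBiterp} (via Fact~\ref{f:DCLit}(iv)), peeling off the central torus factor along the way. First I would reduce to $G$ definably connected: $\Goo$ only depends on the definably connected component $G^0$, which is definable and definably compact, so we may replace $G$ by $G^0$. Next, using Fact~\ref{f:DCLit}(ii), $G$ is (up to finite intersection) the product of its derived subgroup $G'$ and the connected component $Z(G)^0$ of its centre. I would want to upgrade this to a \emph{definable internal direct product} at the level of $\Goo$: since $\Goo = (G')^{00} \times (Z(G)^0)^{00}$ by Lemma~\ref{l:oostruct} --- indeed the finite intersection $Z(G)^0 \cap G'$ disappears at the $\bigwedge$-definable level because $\Goo$ is torsion-free (Lemma~\ref{l:oostruct}(iii)) --- and since $(Z(G)^0)^{00}$ is divisible torsion-free abelian and central in $\Goo$, hence $(\Goo;*)$-definable, while $(G')^{00}$ is $(\Goo;*)$-definable as the derived subgroup (or as a complement). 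So by Lemma~\ref{l:biterpProd}, $(\Goo;*)$ is bi-interpretable with the disjoint union of the divisible torsion-free abelian group $(Z(G)^0)^{00}$ and the group $((G')^{00};*)$.

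It then remains to handle $(G')^{00}$, i.e.\ to assume $G = G'$ is definably connected, definably compact, with $Z(G)$ finite. By Fact~\ref{f:DCLit}(iii), $G/Z(G)$ is a direct product $\prod_i H_i$ of finitely many definably simple definably compact groups. Passing to $\Goo$, the finite centre again washes out: $\Goo \cong (G/Z(G))^{00}$ by Lemma~\ref{l:oostruct}(i) applied to the quotient map (its kernel $Z(G)$ is finite, hence of bounded but the point is $\Goo$ maps isomorphically onto $(G/Z(G))^{00}$ since $\Goo \cap Z(G) = \{e\}$ by torsion-freeness), and $(G/Z(G))^{00} = \prod_i H_i^{00}$ by Lemma~\ref{l:oostruct}(iv). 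Each $H_i^{00}$ is $(\Goo;*)$-definable (the $H_i$, being the factors of the almost-direct-product decomposition, are definable in $G/Z(G)$, hence their infinitesimal subgroups are definable in $(\Goo;*)$ by the tautological interpretation, or one identifies them via centralisers as in Section~\ref{s:SO300}). So again by Lemma~\ref{l:biterpProd}, $(\Goo;*)$ is bi-interpretable with the disjoint union of the $(H_i^{00};*)$.

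Finally, for each definably simple factor $H_i$, Fact~\ref{f:DCLit}(iv) gives a compact linear Lie group $\tilde H_i$ and a real closed field $\Ru_i \succ \R$ with $(H_i^{00};*) \cong (\tilde H_i^{00}(\Ru_i);*)$. Since $H_i$ is definably simple and definably connected, $\tilde H_i$ is a compact linear Lie group whose underlying abstract group is simple; one checks its Lie algebra is simple (a non-trivial proper ideal would, via the correspondence, yield a proper non-trivial $\bigwedge$-definable normal subgroup contradicting definable simplicity --- alternatively $\tilde H_i$ is semisimple with trivial centre and connected, and definable simplicity forces the Lie algebra to be simple rather than a product). So Theorem~\ref{t:mainBiterp} applies: $(\tilde H_i^{00}(\Ru_i);*)$, equivalently $(H_i^{00};*)$, is bi-interpretable with the real closed convexly valued field $(\Ru_i;+,\cdot,\O_i)$. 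Stringing together the bi-interpretations (bi-interpretability is transitive, and a disjoint union of bi-interpretable structures is bi-interpretable with the disjoint union --- there is no cross-sort structure to worry about, which is exactly the content of the orthogonality remarks after the theorem statement), we conclude $(\Goo(M);*)$ is bi-interpretable with the disjoint union of $(Z(G)^0)^{00}$ and the finitely many $(\Ru_i;+,\cdot,\O_i)$, as desired.

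The main obstacle I anticipate is the bookkeeping around passing direct-product and almost-direct-product decompositions of the ambient groups down to genuine \emph{definable} internal direct product decompositions of $\Goo$ in the pure group language $(\Goo;*)$ --- in particular verifying that each factor's infinitesimal subgroup is $(\Goo;*)$-definable (not merely $M$-definable), which is where Lemma~\ref{l:oostruct} and the torsion-freeness of $\Goo$ do the real work, together with the observation that derived subgroups and centres are definable in the pure group. A secondary point requiring care is checking that the Lie groups produced by Fact~\ref{f:DCLit}(iv) for the definably simple factors are genuinely \emph{simple} compact Lie groups in the sense required by Theorem~\ref{t:mainBiterp} (connected with simple Lie algebra), rather than merely abstractly simple or semisimple.
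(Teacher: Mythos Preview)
Your approach is essentially the same as the paper's, and the overall architecture is correct. However, there is a genuine gap at the point you yourself flag as the main obstacle.

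You assert that $(G')^{00}$ is $(\Goo;*)$-definable ``as the derived subgroup (or as a complement)'', and in your concluding paragraph you write that ``derived subgroups and centres are definable in the pure group''. This is not true in general: the derived subgroup $(\Goo,\Goo)$ is the subgroup \emph{generated} by the set of commutators $(\Goo,\Goo)_1$, hence a priori only a countable union of definable sets, not a first-order definable set. Nor does ``as a complement'' work: in a direct product $A \times B$ with $A$ abelian, the factor $B$ is not recoverable from the group structure alone unless you can pin it down some other way. What the paper does (Claim~\ref{c:GooProd}(ii)) is invoke the result of d'Andrea--Maffei that for a compact semisimple Lie group $H$, every element of $H^{00}$ is a \emph{single} commutator, i.e.\ $(H^{00},H^{00})_1 = H^{00}$. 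Transporting this through Fact~\ref{f:DCLit}(iv) and Claim~\ref{c:deriv00} gives $(G')^{00} = (\Goo,\Goo)_1$, which \emph{is} first-order definable. This is a substantive input you are missing, not mere bookkeeping.

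A secondary point: your first justification for the $(\Hoo;*)$-definability of each $H_i^{00}$ --- that $H_i$ is definable in the o-minimal structure, ``hence'' $H_i^{00}$ is definable in the pure group via the tautological interpretation --- is a non sequitur (definability in $M$ does not descend to definability in $(\Hoo;*)$). Your alternative suggestion, via centralisers, is the right one; the paper carries this out using Lemma~\ref{l:Coo} to show $H_i^{00} = \bigcap_{j\neq i} C_{\Hoo}(H_j^{00})$. You should commit to that route and drop the first.
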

  \addtocounter{theorem}{-1}
}
\begin{proof}
  It follows from Lemma~\ref{l:oostruct}(ii) that $\Goo = (G^0)^{00}$ where $G^0$ is 
  the smallest definable subgroup of finite index, so we may assume $G=G^0$, 
  and hence (by \cite[Lemma~2.12]{P-groupsFieldsOMin}) that $G$ is definably 
  connected.

  Let $G' := (G,G)$ be the derived subgroup of $G$, and let $(\Goo)' := 
  (\Goo,\Goo)$ be the derived subgroup of $\Goo$.

  By Fact~\ref{f:DCLit}(i), $G'$ and $Z(G)$ are definable and definably compact, and 
  so Lemma~\ref{l:oostruct} applies to them.
  
  Let $H := G'/Z(G')$. Let $H_i$ be as in Fact~\ref{f:DCLit}(iii), so $H = \prod_i 
  H_i$.

  \begin{claim} \label{c:deriv00}
    \begin{enumerate}[(i)]\item $(G')^{00} \cong  H^{00}$ as groups.
    \item $H^{00} = (\prod_i H_i)^{00}$ is the definable internal direct 
    product of the $H_i^{00}$.
    \end{enumerate}
  \end{claim}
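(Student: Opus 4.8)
To prove Claim~\ref{c:deriv00}, for part (i) the plan is first to note that Lemma~\ref{l:oostruct} applies to $G'$, $Z(G')$ and each $H_i$: by Fact~\ref{f:DCLit}(i),(iii) these are definable and definably compact, and $Z(G')$ is finite. Then the quotient map $q\colon G'\twoheadrightarrow H=G'/Z(G')$ is a definable surjective homomorphism, so $q((G')^{00})=H^{00}$ by Lemma~\ref{l:oostruct}(i), while $\ker(q\negmedspace\restriction_{(G')^{00}})=(G')^{00}\cap Z(G')$ is a finite subgroup of $(G')^{00}$, hence trivial since $(G')^{00}$ is torsion-free by Lemma~\ref{l:oostruct}(iii). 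Thus $q\negmedspace\restriction_{(G')^{00}}\colon(G')^{00}\to H^{00}$ is the desired (even $M$-definable) group isomorphism. For part (ii), the identity $H^{00}=(\prod_i H_i)^{00}=\prod_i H_i^{00}$, as an internal direct product with $H_i^{00}=H^{00}\cap H_i$, follows from Lemma~\ref{l:oostruct}(iv) iterated together with Lemma~\ref{l:oostruct}(ii). Writing $P:=H^{00}$, what remains --- and what requires an argument --- is that each factor $H_i^{00}$ is definable in the \emph{pure} group $(P;*)$.

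Here are the ingredients I would assemble. First, $Z(H_j^{00})=\{e\}$ for each $j$: Lemma~\ref{l:Coo} with $H_j$ playing the roles of both groups gives $C_{H_j}(H_j^{00})=C_{H_j}(H_j)=Z(H_j)=\{e\}$, the last equality because the non-abelian definably simple group $H_j$ is centreless. Second, the set of commutators of $H_j^{00}$ is all of $H_j^{00}$: by Fact~\ref{f:DCLit}(iv), $H_j^{00}\cong\bar H_j^{00}(\Ru_j)$ for a compact simple Lie group $\bar H_j$, so $(H_j^{00},H_j^{00})_1=H_j^{00}$ by d'Andrea--Maffei \cite{dAndreaMaffei}; computing coordinatewise, this gives likewise $(\prod_{j\neq i}H_j^{00},\prod_{j\neq i}H_j^{00})_1=\prod_{j\neq i}H_j^{00}$. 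Given these two facts, the formula $H_i^{00}=C_P(\prod_{j\neq i}H_j^{00})$ is correct, but it is circular as a definition, since it presupposes definability of the complementary factor.

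To break the circle, I would pin the $i$-th factor down with a parameter: a regular semisimple element $h_i\in H_i^{00}$. Such an $h_i$ exists because, by the proof of Fact~\ref{f:DCLit}(iv), $H_i^{00}$ is realised as the infinitesimal subgroup $\st^{-1}(e)$ of a compact semialgebraic group $\mathbb{H}_i(R_i)$ over a definable real closed field $R_i$, with $\mathbb{H}_i$ simple; hence $H_i^{00}$ is Zariski dense in $\mathbb{H}_i$ and so meets the Zariski open dense regular semisimple locus, and for such $h_i$ the centraliser $C_{H_i^{00}}(h_i)=T_i^{00}(R_i)$ for a maximal torus $T_i$, in particular is abelian. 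Regarding $h_i$ as an element of $P=\prod_j H_j^{00}$ supported in the $i$-th coordinate, we get $C_P(h_i)=\prod_{j\neq i}H_j^{00}\times C_{H_i^{00}}(h_i)$. Since $C_{H_i^{00}}(h_i)$ is abelian, every commutator of elements of $C_P(h_i)$ lies in $\prod_{j\neq i}H_j^{00}$; conversely, by the second ingredient every element of $\prod_{j\neq i}H_j^{00}\subseteq C_P(h_i)$ is such a commutator. Hence $(C_P(h_i),C_P(h_i))_1=\prod_{j\neq i}H_j^{00}$, which is therefore $(P;*)$-definable over $h_i$, and then so is $H_i^{00}=C_P(\prod_{j\neq i}H_j^{00})$, using $Z(H_j^{00})=\{e\}$. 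This yields the definable internal direct product.

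The step I expect to be the main obstacle is exactly this last one: passing from the obvious $M$-definability of $H_i^{00}$ (as $H^{00}\cap H_i$ for the $M$-definable subgroup $H_i\leq H$) to its definability in the \emph{pure group} $(H^{00};*)$, as required by the notion of definable internal direct product. The device of using a regular-element parameter together with d'Andrea--Maffei's commutator surjectivity to make the complementary factor definable is what I expect to be the crux. The remaining points --- definable compactness of $G'$, $Z(G')$ and the $H_i$ so that Lemma~\ref{l:oostruct} is available, the Zariski density of the infinitesimal subgroup in $\mathbb{H}_i$ securing a regular element inside it, and $Z(H_j^{00})=\{e\}$ via Lemma~\ref{l:Coo} --- I expect to be routine.
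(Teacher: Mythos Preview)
Your argument is correct. Part~(i) is exactly the paper's proof. For part~(ii), however, you take a genuinely different route from the paper.

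The paper's argument is shorter: from Lemma~\ref{l:Coo} (applied via the identification of Fact~\ref{f:DCLit}(iv)) one gets $C_H(H_j^{00})=C_H(H_j)$, and since each $H_j$ is centreless, $H_i=\bigcap_{j\neq i}C_H(H_j)$, whence $H_i^{00}=\bigcap_{j\neq i}C_{\Hoo}(H_j^{00})$. The apparent circularity you flag is then resolved, implicitly, by the DCC for $M$-definable subgroups exactly as in the proof of Lemma~\ref{l:spin3}: there is a finite $X\subseteq\bigcup_{j\neq i}H_j^{00}$ with $C_H(X)=\bigcap_{j\neq i}C_H(H_j)$, so $H_i^{00}=C_{\Hoo}(X)$ is $(\Hoo;*)$-definable over the finite parameter set $X$.

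Your approach instead produces an explicit single parameter $h_i$ per factor, at the cost of importing two extra ingredients: the d'Andrea--Maffei commutator surjectivity (which the paper uses only later, in Claim~\ref{c:GooProd}(ii)) and the Zariski-density argument securing a regular element inside $H_i^{00}$. Both routes are sound; the paper's is more economical, while yours makes the defining parameter concrete and avoids appealing to the o-minimal DCC.
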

  \begin{proof}
    \begin{enumerate}[(i)]\item
    $(G')^{00}$ is torsion-free by Lemma~\ref{l:oostruct}(iii), and $Z(G')$ is 
    finite by Fact~\ref{f:DCLit}(iii), thus $Z(G') \cap (G')^{00} = \{e\}$. So by 
    Lemma~\ref{l:oostruct}(i), the quotient map induces such an isomorphism.

    \item Given Lemma~\ref{l:oostruct}(iv), we need only show that $H_i^{00}$ is 
    $(\Hoo;*)$-definable.

    By Lemma~\ref{l:Coo}, $C_H(H_i^{00}) = C_H(H_i)$, and since each $H_i$ is 
    centreless we have $H_i = \bigcap_{j \neq  i} C_H(H_j)$, thus $H_i^{00} = 
    \bigcap_{j \neq  i} C_{\Hoo}(H_j^{00})$ (using Lemma~\ref{l:oostruct}(iv) again).
    \end{enumerate}
  \end{proof}

  \begin{claim} \label{c:GooProd}
    \begin{enumerate}[(i)]\item $\Goo = Z(G)^{00}(G')^{00}$.
    \item $(G')^{00} = (\Goo)'$.
    \item $Z(G)^{00} = Z(\Goo)$.
    \item $\Goo$ is the definable internal direct product of $Z(G)^{00}$ and 
    $(G')^{00}$.
    \end{enumerate}
  \end{claim}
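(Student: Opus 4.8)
The plan is to isolate two structural facts about the ``semisimple part'' $N := (G')^{00}$ of $\Goo$ --- that $N$ is centreless and that $(N,N)_1 = N$ --- and then to read off (i)--(iv) by elementary group theory together with Lemma~\ref{l:oostruct}, Fact~\ref{f:DCLit} and Claim~\ref{c:deriv00}.

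First I would record the easy facts about $Z(G)^{00}$. By Fact~\ref{f:DCLit}(i) the group $Z(G)$ is definably compact, hence so is its definably connected component $Z(G)^0$; since $Z(G)^{00}$, being the smallest $\bigwedge$-definable subgroup of bounded index, is contained in the finite-index definable subgroup $Z(G)^0$, Lemma~\ref{l:oostruct}(ii) applied to $Z(G)^0 \le Z(G)$ gives $Z(G)^{00} = (Z(G)^0)^{00} = \Goo \cap Z(G)^0$. In particular $Z(G)^{00} \subseteq \Goo$, and being contained in $Z(G)$ it is central in $\Goo$, so $Z(G)^{00} \subseteq Z(\Goo)$. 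For (i): by Fact~\ref{f:DCLit}(ii) the multiplication map $\mu : G' \times Z(G)^0 \to G$ is a surjective definable homomorphism of definably compact groups with finite kernel, so Lemma~\ref{l:oostruct}(i) gives $\mu\bigl((G' \times Z(G)^0)^{00}\bigr) = \Goo$, while Lemma~\ref{l:oostruct}(iv) identifies $(G' \times Z(G)^0)^{00}$ with $(G')^{00} \times (Z(G)^0)^{00} = N \times Z(G)^{00}$; hence $\Goo = N \cdot Z(G)^{00}$, which is (i).

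Next I would establish the two key properties of $N$. By Claim~\ref{c:deriv00}, $N \cong \prod_i H_i^{00}$, and by Fact~\ref{f:DCLit}(iv) each $H_i^{00}$ is isomorphic to $\tilde H_i^{00}(\Ru_i)$ for a compact simple linear Lie group $\tilde H_i$. For such a group \cite{dAndreaMaffei} gives $(\tilde H_i^{00}(\Ru_i),\tilde H_i^{00}(\Ru_i))_1 = \tilde H_i^{00}(\Ru_i)$, and since commutators in a direct product are computed coordinatewise this yields $(N,N)_1 = N$. Moreover $\tilde H_i^{00}(\Ru_i)$ is torsion-free, while $Z(\tilde H_i(\Ru_i))$ is finite (the centre of a simple linear algebraic group is finite), so Lemma~\ref{l:Coo} (with both roles played by $\tilde H_i$) gives $C_{\tilde H_i(\Ru_i)}(\tilde H_i^{00}(\Ru_i)) = Z(\tilde H_i(\Ru_i))$; hence $Z(\tilde H_i^{00}(\Ru_i))$ is a finite subgroup of a torsion-free group, so trivial. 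Therefore $Z(N) = \prod_i Z(H_i^{00}) = \{e\}$.

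Finally, (ii)--(iv) are formal. Since $Z(G)^{00}$ is central in $\Goo$ and $\Goo = N \cdot Z(G)^{00}$, every commutator of elements of $\Goo$ equals the commutator of their $N$-components, so $(\Goo,\Goo)_1 = (N,N)_1 = N$; in particular $(\Goo)' = N$, which is (ii), and this also exhibits $N = (\Goo,\Goo)_1$ as a $\Goo$-definable set. For (iii): the inclusion $Z(G)^{00} \subseteq Z(\Goo)$ was noted above; conversely, given $z \in Z(\Goo)$, write $z = xb$ with $x \in N$, $b \in Z(G)^{00}$, and then the fact that $z$ centralises $N$ together with $b$ being central forces $x \in Z(N) = \{e\}$, so $z = b \in Z(G)^{00}$. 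Thus $Z(G)^{00} = Z(\Goo)$, which is $\Goo$-definable as a centre. For (iv): $N \cap Z(G)^{00} \subseteq Z(N) = \{e\}$, so $N$ and $Z(G)^{00}$ are $\Goo$-definable subgroups which commute elementwise, generate $\Goo$, and meet trivially; hence the multiplication map $Z(G)^{00} \times N \to \Goo$ is a $\Goo$-definable group isomorphism, i.e.\ $\Goo$ is their definable internal direct product. The only real content lies in the chain of reductions identifying $N$ with a product of infinitesimal subgroups of simple compact Lie groups (so as to import perfectness and centrelessness); once that is in place, the rest is bookkeeping.
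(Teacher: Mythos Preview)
Your argument is correct and follows the same overall architecture as the paper's proof: establish that $N=(G')^{00}$ is perfect (via \cite{dAndreaMaffei} and Claim~\ref{c:deriv00}) and centreless (via Lemma~\ref{l:Coo} and torsion-freeness), then read off (i)--(iv). The differences are local. For (i), you push the product decomposition through the surjection $G'\times Z(G)^0\twoheadrightarrow G$ using Lemma~\ref{l:oostruct}(i),(iv), whereas the paper instead checks directly that $Z(G)^{00}(G')^{00}$ is divisible, torsion-free and of bounded index, then invokes the uniqueness in Lemma~\ref{l:oostruct}(iii); your route is arguably cleaner since it avoids verifying divisibility and torsion-freeness of the product by hand. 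For (iv), you obtain $N\cap Z(G)^{00}=\{e\}$ from $Z(N)=\{e\}$, while the paper uses instead the finiteness of $Z(G)^0\cap G'$ from Fact~\ref{f:DCLit}(ii) together with torsion-freeness; both are one-line deductions. Parts (ii) and (iii) are essentially identical to the paper's arguments.
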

  \begin{proof}
    \begin{enumerate}[(i)]\item
      Since $Z(G)^{00}$ is central and each of $Z(G)^{00}$ and $(G')^{00}$ is 
      divisible and torsion-free, also $L := Z(G)^{00}(G')^{00}$ is divisible 
      and torsion-free. Now $G = Z(G)G'$ by Fact~\ref{f:DCLit}(ii), so any coset of 
      $L$ can be written as $zaL = (zZ(G)^{00})(a(G')^{00})$ with $z \in Z(G)$ 
      and $a \in G'$, thus the index of $L$ in $G$ is bounded by the product of 
      the indices of $Z(G)^{00}$ in $Z(G)$ and of $(G')^{00}$ in $G'$. So $L$ 
      is a bounded index subgroup. Thus we conclude by Lemma~\ref{l:oostruct}(iii).

    \item
      By Fact~\ref{f:DCLit}(iv) and \cite{dAndreaMaffei}, $H_i^{00} = 
      (H_i^{00},H_i^{00})_1$ for each $i$.
      So by Claim~\ref{c:deriv00}, $(G')^{00} = ((G')^{00},(G')^{00})_1$.
      Also $\Goo \cap G' = (G')^{00}$ by Lemma~\ref{l:oostruct}(ii),
      and so $(\Goo,\Goo)_1 = (G')^{00}$, and then since this is a subgroup we 
      also have $(\Goo)' = (\Goo,\Goo)_1$.

    \item
      By (i) it suffices to see that $Z((G')^{00}) = \{e\}$. But indeed, as in 
      Lemma~\ref{l:Coo}, $Z((G')^{00}) \leq  C_{G'}((G')^{00} = C_{G'}(G') = Z(G') = 
      \{e\}$. Alternatively, one can see this way that each $Z(H_i^{00}) = 
      \{e\}$, and apply Claim~\ref{c:deriv00}(ii).

    \item
      By Fact~\ref{f:DCLit}(ii), $Z(G)^0 \cap G'$ is finite. Hence also $Z(G)^{00} 
      \cap (G')^{00}$ is finite,
      and thus by Lemma~\ref{l:oostruct}(iii) it is trivial.
      Combining this with the previous items of this Claim, we conclude.
    \end{enumerate}
  \end{proof}

  Now each $H_i^{00}$ is bi-interpretable with a model of RCVF by 
  Fact~\ref{f:DCLit}(iv) and Theorem~\ref{t:mainBiterp}, and $Z(G)^{00}$ is (by 
  Lemma~\ref{l:oostruct}(iii)) a divisible torsion free abelian group, so we 
  conclude by Claim~\ref{c:GooProd}(iv), Claim~\ref{c:deriv00}, and Lemma~\ref{l:biterpProd}.
\end{proof}

\begin{remark}
  Since $M$ is an o-minimal expansion of a field, any $M$-definable real 
  closed field is $M$-definably isomorphic to $M$ as a field. Thus the valued 
  fields $R_i$ interpreted in the groups $H_i^{00}$ in the above proof are 
  $M$-definably isomorphic as fields.
  However, the disjoint union structure clearly does not define any such 
  isomorphisms between the $R_i$, and hence nor does the group $(\Goo;*)$.
\end{remark}

\bibliography{G00R}
\end{document}